\theoremstyle{theorem}
\newtheorem{theorem}{Theorem}[section]
\newtheorem{lemma}[theorem]{Lemma}
\newtheorem{conjecture}[theorem]{Conjecture}
\newtheorem{corollary}[theorem]{Corollary}
\newtheorem{proposition}[theorem]{Proposition}
\def\beq#1#2\eeq{%
        \begin{equation}%
        \label{#1}%
            #2%
        \end{equation}%
   }
\newcommand{\Ord}{\mathrm{O}}
\DeclareMathOperator{\Conv}{Conv}
\theoremstyle{definition}
\newtheorem{example}[theorem]{Example}
\newtheorem{definition}[theorem]{Definition}
\DeclareMathOperator{\Hess}{Hess}
\title[Markov Polynomials]{Arithmetic and Geometry of Markov polynomials}
\author{S.J. Evans}
\address{Department of Mathematical Sciences,
Loughborough University, Loughborough LE11 3TU, UK}
\email{S.J.Evans@lboro.ac.uk}
\author{A.P. Veselov}
\address{Department of Mathematical Sciences,
Loughborough University, Loughborough LE11 3TU, UK}
\email{A.P.Veselov@lboro.ac.uk}
\author{B. Winn}
\address{Department of Mathematical Sciences,
Loughborough University, Loughborough LE11 3TU, UK}
\email{B.Winn@lboro.ac.uk}
\begin{document}

\maketitle

\begin{abstract}
    Markov polynomials are the Laurent-polynomial solutions of the generalised Markov equation $$X^2 + Y^2 + Z^2 = kXYZ, \quad k=\frac{x^2 + y^2 + z^2}{x y z}$$ which are the results of cluster mutations applied to the initial triple $(x, y, z)$. They were first introduced and studied by Itsara, Musiker, Propp and Viana, who proved, in particular, that their coefficients are non-negative integers. We study the coefficients of Markov polynomials as functions on the corresponding Newton polygons, proposing several new conjectures.
Some of these conjectures are proved for the special cases of Markov polynomials corresponding to Fibonacci and Pell numbers.   
  \end{abstract}

\section{Introduction}

One of the most remarkable Diophantine equations is the celebrated Markov 
equation
$$X^2 + Y^2 + Z^2=3XYZ.$$ Markov showed \cite{Markov} that all its solutions 
in positive integers can be found recursively from $(1,1,1)$ using the Vieta 
involution 
\beq{Vieta}
(X,Y,Z)\mapsto \left(X,Y, Z'\right),\qquad Z' = \frac{X^2+Y^2}{Z},
\eeq
and permutation of variables.

Following  Itsara, Musiker, Propp and Viana \cite{IMPV,Pro}, who were inspired by the earlier work of Fomin and Zelevinsky \cite{FZ}, we consider the following generalisation of 
the Markov equation 
\beq{MEq}
    X^2 + Y^2 + Z^2 = k(x, y, z) XYZ, \quad k(x, y, z)= \frac{x^2 + y^2 + z^2}{x y z}.
\eeq
with the solutions being rational functions of the parameters $x,y,z$. Starting with the initial solution $(X,Y,Z)=(x,y,z)$ we can apply the Vieta involutions \eqref{Vieta} to arrive at solutions $$X=X(x,y,z),\; Y=Y(x,y,z),\; Z=Z(x,y,z)$$ of the equation (\ref{MEq}), certain Laurent polynomials of $x,y,z$, which
we call {\it Markov polynomials.} 

The Laurent property follows from general cluster algebra framework of Fomin and Zelevinsky \cite{FZ}, but in this case also from the alternative form of the Vieta involution 
\begin{equation}
\label{eq:vieta2}
Z'=k(x,y,z)XY-Z.
\end{equation}
In \cite{Pro} it was shown that the coefficients of Markov polynomials are non-negative. When we specialise all the parameters $x,y,z$ to $1$, we get the celebrated Markov numbers, playing seminal role in many areas of mathematics \cite{Aig}. In this sense Markov polynomials can be viewed as a 3-parameter quantisation of Markov numbers.

The aim of this paper is to study these polynomials in more detail. We will view their coefficients as the functions on the corresponding Newton polygons, which we will describe explicitly.
Using the geometry of the Newton polygons, we prove some new results and state several conjectures about the coefficients of Markov polynomials.
Some of these conjectures are proved for the special series of Markov polynomials corresponding to Fibonacci and Pell numbers.  
We discuss also the continuum limit, introducing the corresponding entropy function, which is conjectured to be concave.  

\section{Markov polynomials on the Conway Topograph}

To represent Markov numbers it is now customary (see e.g. \cite{SV}) to use the Conway topograph introduced by J.H. Conway \cite{Con} to vizualise the values of quadratic forms. 

The Conway topograph consists of the planar domains which are connected components of the complement to the trivalent  tree imbedded 
in the plane. These domains were originally labelled by superbases in the integer lattice $\mathbb Z^2$, but can be also labelled by the rational numbers using the Farey mediant $\frac{p_1}{q_1}\oplus \frac{p_2}{q_2}=\frac{p_1+p_2}{q_1+q_2}$.
We will only be interested in the rationals in $[0, 1]$, leading to the rational Conway topograph given in Figure \ref{fig:Rational Conway}. Neighbouring regions are occupied by Farey neighbours, i.e. $\frac{a}{b}, \frac{c}{d}$ where $\abs{ad-bc} = 1$. 

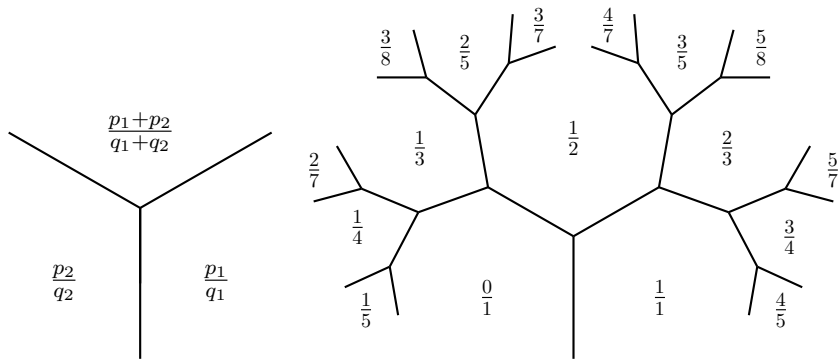
\begin{figure}[H]
\begin{center}
\begin{tabular}{c c}
	\begin{tikzpicture} 
	
	\node at (1.0,1) {$\frac{p_1}{q_1}$};
	\node at (-1.0,1) {$\frac{p_2}{q_2}$};
	\node at (0,3) {$\frac{p_1+p_2}{q_1+q_2}$};
	

        \draw[thick] (0, 0) -- (0, 2);
        \draw[thick] (0, 1) -- (0, 2);
	\draw[thick] (0,2) -- (1.73,3);
	\draw[thick] (0,2) -- (-1.73,3);

	\end{tikzpicture}

&
\begin{tikzpicture}[scale=0.65, every node/.style={scale=0.75}]
    \node at (1.75,0.75) {\Large$\frac{1}{1}$};
    \node at (-1.75,0.75){\Large$\frac{0}{1}$};
    \node at (0,4) {\Large$\frac{1}{2}$};
    \node at (3.11, 3.8) {\Large$\frac{2}{3}$};
    \node at (2.20, 5.76) {\Large$\frac{3}{5}$}; 
    \node at (4.37, 1.98) {\Large$\frac{3}{4}$}; 
    \node at (0.7, 6.3) {\Large$\frac{4}{7}$}; 
    \node at (3.8, 5.85) {\Large$\frac{5}{8}$}; 
    \node at (5.25, 3.35) {\Large$\frac{5}{7}$}; 
    \node at (4.2, 0.5) {\Large$\frac{4}{5}$}; 
    \node at (-3.11, 3.8) {\Large$\frac{1}{3}$};
    \node at (-2.20, 5.76) {\Large$\frac{2}{5}$}; 
    \node at (-4.37, 2.18) {\Large$\frac{1}{4}$}; 
    \node at (-0.7, 6.3) {\Large$\frac{3}{7}$}; 
    \node at (-3.8, 5.85) {\Large$\frac{3}{8}$}; 
    \node at (-5.25, 3.35) {\Large$\frac{2}{7}$}; 
    \node at (-4.2, 0.5) {\Large$\frac{1}{5}$}; 
    
        
        \draw[thick] (0, -0.5)  -- (0, 2);
    
        \draw[thick] (0,2) -- (1.73,3);
            
            \draw[thick] (1.73, 3) -- (1.99, 4.48);	
    
                \draw[thick] (1.99, 4.48) -- (1.31, 5.53);
                
                    \draw[thick] (1.31, 5.53) -- (0.36, 5.87);
                    \draw[thick] (1.31, 5.53) --  (1.23, 6.53);
                    
                \draw[thick] (1.99, 4.48) -- (2.98, 5.24);	
                
                    \draw[thick] (2.98, 5.24) -- (3.24, 6.21);
                    \draw[thick] (2.98, 5.24) -- (3.98, 5.24);
    
            \draw[thick] (1.73, 3) -- (3.14, 2.49);
    
                \draw[thick] (3.14, 2.49) -- (4.29, 2.97);
    
                    \draw[thick] (4.29, 2.97) -- (4.79, 3.84);
                    \draw[thick] (4.29, 2.97) -- (5.26, 2.71);
                
                \draw[thick] (3.14, 2.49) -- (3.72, 1.38);
                    
                    \draw[thick] (3.72, 1.38) -- (4.63, 0.96);
                    \draw[thick] (3.72, 1.38) -- (3.55, 0.39);
                    
        \draw[thick] (0,2) -- (-1.73,3);
        
            \draw[thick] (-1.73, 3) -- (-1.99, 4.48);	
        
                \draw[thick] (-1.99, 4.48) -- (-1.31, 5.53);
        
                    \draw[thick] (-1.31, 5.53) -- (-0.36, 5.87);
                    \draw[thick] (-1.31, 5.53) -- (-1.23, 6.53);
        
                \draw[thick] (-1.99, 4.48) -- (-2.98, 5.24);	
        
                    \draw[thick] (-2.98, 5.24) -- (-3.24, 6.21);
                    \draw[thick] (-2.98, 5.24) -- (-3.98, 5.24);
        
            \draw[thick] (-1.73, 3) -- (-3.14, 2.49);
        
                \draw[thick] (-3.14, 2.49) -- (-4.29, 2.97);
        
                    \draw[thick] (-4.29, 2.97) -- (-4.79, 3.84);
                    \draw[thick] (-4.29, 2.97) -- (-5.26, 2.71);
        
                \draw[thick] (-3.14, 2.49) -- (-3.72, 1.38);
        
                    \draw[thick] (-3.72, 1.38) -- (-4.63, 0.96);
                    \draw[thick] (-3.72, 1.38) -- (-3.55, 0.39);
\end{tikzpicture}
\end{tabular}
\end{center}
\caption{Rationals in $[0, 1]$ represented on the Conway topograph.} \label{fig:Rational Conway}
\end{figure}

To represent Markov numbers on the Conway topograph we iterate \textit{via}
the Vieta formula from the Markov equation, as shown on the left of Figure \ref{fig:Markov Conway}, starting with the initial state $X = Y = Z = 1$. Markov triples can then be seen by looking at the three domains adjacent to a vertex. The first few iterations of this process can also be seen in Figure \ref{fig:Markov Conway}.

Juxtaposition with the rational Conway topograph establishes the parametrization of the Markov numbers by rational numbers $\rho \in [0,1]$, a way of
parametrizing Markov numbers which goes back to Frobenius \cite{Frob}.

\begin{figure}[H]
\begin{center}
\begin{tabular}{c c}
    \begin{tikzpicture}
        \node at (0, 0) {$Z$};
        \node at (-0.75, 1.75) {$X$};
        \node at (0.75, 1.75) {$Y$};
        \node at (0, 4) {$Z' = \frac{X^2 + Y^2}{Z}$};
 
        \draw[thick] (0, 1) -- (0, 2.5);
        \draw[thick] (-1, 0) -- (0, 1);
      
        \draw[thick] (0, 1) -- (1, 0);
        \draw[thick] (-1, 3.5) -- (0, 2.5);
        \draw[thick] (0, 2.5) -- (1, 3.5);
    \end{tikzpicture}
&
    \begin{tikzpicture}[scale=0.65, every node/.style={scale=0.75}]
        \node at (1.75,0.75) {$2$};
	\node at (-1.75,0.75){$1$};
	\node at (0,4) {$5$};
	\node at (3.11, 3.8) {$29$};
	\node at (2.20, 5.76) {$433$}; 
	\node at (4.37, 1.98) {$169$}; 
	\node at (0.7, 6.3) {$6466$}; 
	\node at (3.8, 5.85) {$37666$}; 
	\node at (5.25, 3.35) {$14701$}; 
	\node at (4.2, 0.5) {$985$}; 
	\node at (-3.11, 3.8) {$13$};
	\node at (-2.20, 5.76) {$194$}; 
	\node at (-4.37, 1.98) {$34$}; 
	\node at (-0.7, 6.3) {$2897$}; 
	\node at (-3.8, 5.85) {$7561$}; 
	\node at (-5.25, 3.35) {$1325$}; 
	\node at (-4.2, 0.5) {$89$}; 
	
        \draw[thick] (0, -0.5) -- (0, 2);
	
		\draw[thick] (0,2) -- (1.73,3);
			
			\draw[thick] (1.73, 3) -- (1.99, 4.48);	
	
				\draw[thick] (1.99, 4.48) -- (1.31, 5.53);
				
					\draw[thick] (1.31, 5.53) -- (0.36, 5.87);
					\draw[thick] (1.31, 5.53) --  (1.23, 6.53);
					
				\draw[thick] (1.99, 4.48) -- (2.98, 5.24);	
				
					\draw[thick] (2.98, 5.24) -- (3.24, 6.21);
					\draw[thick] (2.98, 5.24) -- (3.98, 5.24);
	
			\draw[thick] (1.73, 3) -- (3.14, 2.49);
	
				\draw[thick] (3.14, 2.49) -- (4.29, 2.97);
	
					\draw[thick] (4.29, 2.97) -- (4.79, 3.84);
					\draw[thick] (4.29, 2.97) -- (5.26, 2.71);
				
				\draw[thick] (3.14, 2.49) -- (3.72, 1.38);
					
					\draw[thick] (3.72, 1.38) -- (4.63, 0.96);
					\draw[thick] (3.72, 1.38) -- (3.55, 0.39);
					
		\draw[thick] (0,2) -- (-1.73,3);
		
			\draw[thick] (-1.73, 3) -- (-1.99, 4.48);	
		
				\draw[thick] (-1.99, 4.48) -- (-1.31, 5.53);
		
					\draw[thick] (-1.31, 5.53) -- (-0.36, 5.87);
					\draw[thick] (-1.31, 5.53) -- (-1.23, 6.53);
		
				\draw[thick] (-1.99, 4.48) -- (-2.98, 5.24);	
		
					\draw[thick] (-2.98, 5.24) -- (-3.24, 6.21);
					\draw[thick] (-2.98, 5.24) -- (-3.98, 5.24);
		
			\draw[thick] (-1.73, 3) -- (-3.14, 2.49);
		
				\draw[thick] (-3.14, 2.49) -- (-4.29, 2.97);
		
					\draw[thick] (-4.29, 2.97) -- (-4.79, 3.84);
					\draw[thick] (-4.29, 2.97) -- (-5.26, 2.71);
		
				\draw[thick] (-3.14, 2.49) -- (-3.72, 1.38);
		
					\draw[thick] (-3.72, 1.38) -- (-4.63, 0.96);
					\draw[thick] (-3.72, 1.38) -- (-3.55, 0.39);
	\end{tikzpicture}
    \end{tabular}
\end{center}
\caption{Markov numbers represented on the Conway topograph.} \label{fig:Markov Conway}
\end{figure}
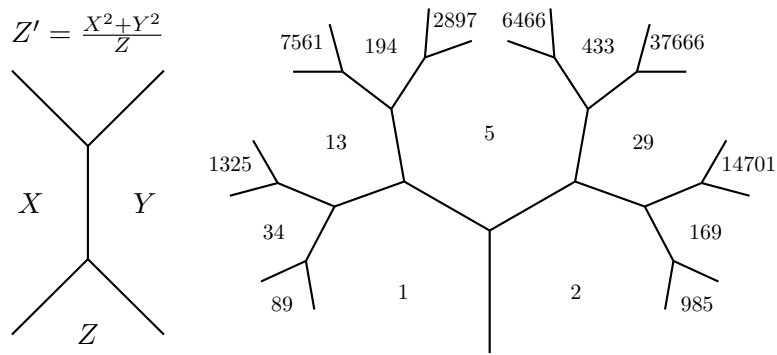


If instead of $(1,1,1)$ on the Conway topograph we start with the triple $(x,y,z)$ and apply the same Vieta formula we get Laurent polynomial solutions of the equation (\ref{MEq}).

\begin{figure}[H]
\begin{center}
	\begin{tikzpicture}
    \node at (1.75,0.75) {$M_{1/1} = \frac{x^2 + y^2}{z}$};
	\node at (-1.75,0.75){$M_{0/1} = x$};
        \node at (0, 4.7) {$M_{1/2} =$};
	\node at (0,4) {$\frac{x^4 + 2x^2 y^2 + y^4 + x^2 z^2}{yz^2}$};
	\node at (3.11, 3.8) {$M_{2/3}$};
	\node at (2.30, 5.76) {$M_{3/5}$}; 
	\node at (4.37, 1.98) {$M_{3/4}$}; 
	\node at (0.87, 6.97) {$M_{4/7}$}; 
	\node at (4.38, 5.85) {$M_{5/8}$}; 
	\node at (5.81, 3.35) {$M_{5/7}$}; 
	\node at (4.43, 0.34) {$M_{4/5}$}; 
	\node at (-3.11, 3.8) {$M_{1/3}$};
	\node at (-2.10, 5.76) {$M_{2/5}$}; 
	\node at (-4.37, 2.18) {$M_{1/4}$}; 
	\node at (-0.87, 6.97) {$M_{3/7}$}; 
	\node at (-4.38, 5.85) {$M_{3/8}$}; 
	\node at (-5.81, 3.35) {$M_{2/7}$}; 
	\node at (-4.43, 0.34) {$M_{1/5}$}; 
	
	\draw[thick] (0, -0.5) -- (0,2);
	
		\draw[thick] (0,2) -- (1.73,3);
			
			\draw[thick] (1.73, 3) -- (1.99, 4.48);	
	
				\draw[thick] (1.99, 4.48) -- (1.31, 5.53);
				
					\draw[thick] (1.31, 5.53) -- (0.36, 5.87);
					\draw[thick] (1.31, 5.53) --  (1.23, 6.53);
					
				\draw[thick] (1.99, 4.48) -- (2.98, 5.24);	
				
					\draw[thick] (2.98, 5.24) -- (3.24, 6.21);
					\draw[thick] (2.98, 5.24) -- (3.98, 5.24);
	
			\draw[thick] (1.73, 3) -- (3.14, 2.49);
	
				\draw[thick] (3.14, 2.49) -- (4.29, 2.97);
	
					\draw[thick] (4.29, 2.97) -- (4.79, 3.84);
					\draw[thick] (4.29, 2.97) -- (5.26, 2.71);
				
				\draw[thick] (3.14, 2.49) -- (3.72, 1.38);
					
					\draw[thick] (3.72, 1.38) -- (4.63, 0.96);
					\draw[thick] (3.72, 1.38) -- (3.55, 0.39);
					
		\draw[thick] (0,2) -- (-1.73,3);
		
			\draw[thick] (-1.73, 3) -- (-1.99, 4.48);	
		
				\draw[thick] (-1.99, 4.48) -- (-1.31, 5.53);
		
					\draw[thick] (-1.31, 5.53) -- (-0.36, 5.87);
					\draw[thick] (-1.31, 5.53) -- (-1.23, 6.53);
		
				\draw[thick] (-1.99, 4.48) -- (-2.98, 5.24);	
		
					\draw[thick] (-2.98, 5.24) -- (-3.24, 6.21);
					\draw[thick] (-2.98, 5.24) -- (-3.98, 5.24);
		
			\draw[thick] (-1.73, 3) -- (-3.14, 2.49);
		
				\draw[thick] (-3.14, 2.49) -- (-4.29, 2.97);
		
					\draw[thick] (-4.29, 2.97) -- (-4.79, 3.84);
					\draw[thick] (-4.29, 2.97) -- (-5.26, 2.71);
		
				\draw[thick] (-3.14, 2.49) -- (-3.72, 1.38);
		
					\draw[thick] (-3.72, 1.38) -- (-4.63, 0.96);
					\draw[thick] (-3.72, 1.38) -- (-3.55, 0.39);
	\end{tikzpicture}
\end{center}
\caption{Markov polynomials on the Conway topograph.} \label{fig:MLPoly}
\end{figure}
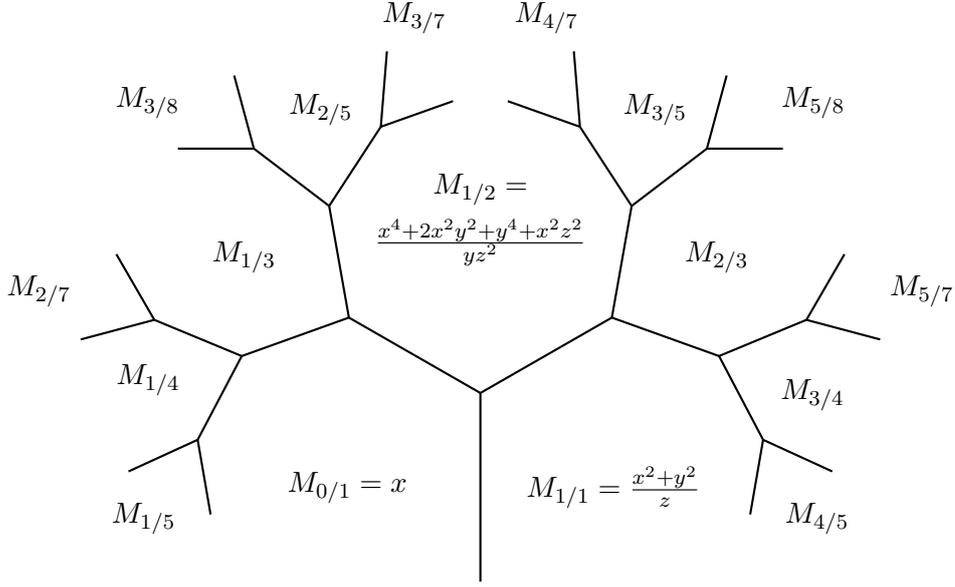


We will use the notation $M_{\rho}=M_\rho(x,y,z)$ to represent the Markov polynomial occupying the same region as the rational $\rho$ on their respective Conway topographs. 
Formally, this Frobenius parametrization is a mapping 
\beq{corres}
F: \rho \in [0, 1]\cap{\mathbb{Q}} \mapsto M_{\rho}(x, y, z). 
\eeq

Setting $x = y = z = 1$ in Markov polynomials gives the corresponding Markov number: $M_{\rho}(1, 1, 1) = m_{\rho}.$

We have the correspondence between Markov triples of numbers and the triples of Markov polynomials $M_{\rho}$ with $\rho \in [0, 1]\cap{\mathbb{Q}}$. In particular, the first Markov triples $(1,1,1),\, (1,1,2), \, (1,2,5)$ correspond to the triples $$(x,y,z),\, \left(x,y, \frac{x^2+y^2}{z}\right),\, \left(x, \frac{x^2+y^2}{z}, \frac{x^4 + 2x^2 y^2 + y^4 + x^2 z^2}{yz^2}\right)$$ respectively.
Thus we have the first few Markov polynomials
\begin{equation}
\label{eq:base_cases}
  M_{0/1} = x,\qquad M_{1/1}=\frac{x^2+y^2}z,\qquad 
M_{1/2} = \frac{(x^2+y^2)^2 + x^2z^2}{yz^2},\ldots
\end{equation}
Many of our proofs will proceed by induction on the Conway topograph, and
the solutions \eqref{eq:base_cases} will provide the base cases for the
induction.

To extend the Frobenius correspondence (\ref{corres}) to all rationals we can use the action of the symmetric group $S_3 \cong \mathrm{SL}_2(\mathbb{Z}_2)$:
\beq{symm}
M_{\rho}(x, y, z) = M_{\frac{1}{\rho}}(y, x, z), \quad
M_{\rho}(x, y, z) = M_{-\frac{1}{1+\rho}}(z, x, y).
\eeq

We believe that in this way we get all such solutions. More precisely, we have the following

\begin{conjecture}
 All the solutions $X,Y,Z \in \mathbb Z[x^{\pm 1}, y^{\pm 1}, z^{\pm 1}]$ of equation (\ref{MEq}) in integer Laurent polynomials, up to a change of sign of any two of them, are given by triples of Markov polynomials.
 \end{conjecture}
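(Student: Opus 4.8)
The plan is to prove the conjecture by combining a \emph{finiteness/rigidity} argument with the structure of the cluster-algebra exchange graph. First I would reduce to understanding solutions in $\mathbb{Z}[x^{\pm1},y^{\pm1},z^{\pm1}]$ of the Markov equation \eqref{MEq} with the fixed coefficient $k(x,y,z)=(x^2+y^2+z^2)/(xyz)$. Up to the sign change allowed in the statement, one may normalise a solution triple $(X,Y,Z)$ so that at least one component is "small"; the key idea is to run the Vieta involutions \eqref{eq:vieta2} \emph{backwards} and show this descent terminates, analogously to Markov's original descent for $(1,1,1)$. Concretely, one assigns to each triple a notion of size (e.g. the total degree of $XYZ$ as a Laurent polynomial, or the value obtained by a suitable positive specialisation such as $x=y=z=t$ and letting $t\to\infty$), shows that away from a bounded region one of the three neighbouring triples obtained by \eqref{eq:vieta2} strictly decreases this size, and thereby reduces any solution to one of the base-case triples in \eqref{eq:base_cases} (or its $S_3$-images under \eqref{symm}). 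Since the ascending mutations from those base cases produce exactly the triples of Markov polynomials $M_\rho$, this would establish the conjecture.

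The crucial analytic input is monotonicity of the descent. Here I would exploit the alternative form \eqref{eq:vieta2}: if $(X,Y,Z)$ is a solution then so is $(X,Y,Z')$ with $Z+Z'=k(x,y,z)XY$, and one of $Z,Z'$ has strictly smaller "size" than the other unless we are at a fixed configuration. Making this rigorous requires a \emph{positivity} statement: after specialising $x,y,z$ to generic positive reals, the Markov polynomials take positive values (this is consistent with, and follows from, the non-negativity of coefficients proved in \cite{Pro}), so that the numerical Markov tree structure governs the descent. The subtle point is that an \emph{a priori} arbitrary Laurent-polynomial solution need not have positive coefficients, so one cannot immediately specialise; instead I would argue that the equation \eqref{MEq} together with membership in $\mathbb{Z}[x^{\pm1},y^{\pm1},z^{\pm1}]$ forces enough structure on the Newton polytopes and leading terms that the descent is still well-defined — for instance by tracking the Laurent degree in a single variable, say $z$, where \eqref{eq:vieta2} gives a clean recursion.

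It remains to show that the descent cannot stall anywhere except at the known base triples and their $S_3$-translates. This is a bounded search: one classifies all solution triples of \eqref{MEq} in which all three components have size below the threshold, and checks directly that the only ones are $(x,y,z)$ and its images under the $S_3$-action \eqref{symm}. The group $S_3\cong\mathrm{SL}_2(\mathbb{Z}_2)$ acts on the Conway topograph transitively on the three regions at the initial vertex, so this amounts to verifying that $(X,Y,Z)=(x,y,z)$ (equivalently the three polynomials in \eqref{eq:base_cases}) is the unique minimal solution up to that symmetry and the sign ambiguity. Finally one invokes the cluster-algebra framework of Fomin–Zelevinsky \cite{FZ}: the exchange graph of this rank-$3$ cluster algebra is exactly the trivalent tree of the Conway topograph, so every triple reachable by mutations from the seed is, by definition, a triple of Markov polynomials $M_\rho$ with $\rho\in[0,1]\cap\mathbb{Q}$ (extended to all rationals via \eqref{symm}), and the descent shows there are no others.

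The main obstacle I anticipate is precisely the gap flagged above: justifying the monotonic descent for a solution that is not assumed \emph{a priori} to have positive coefficients or to lie in the cluster algebra. Controlling the Newton polytope of an arbitrary integer-Laurent-polynomial solution of a quadratic Diophantine equation over $\mathbb{Z}[x^{\pm1},y^{\pm1},z^{\pm1}]$ — in particular ruling out exotic solutions with large cancellation — is the delicate step, and is likely why the authors state this only as a conjecture rather than a theorem; a complete proof may well require a genuinely new idea (e.g. a valuation-theoretic or tropical argument) beyond the cluster-algebra machinery.
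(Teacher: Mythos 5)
This statement is a \emph{conjecture} in the paper, with no proof given; the authors explicitly flag it as open again in the concluding remarks, comparing it to the Bourgain--Gamburd--Sarnak strong approximation conjecture. So there is no paper proof to compare against, and your proposal cannot be ``the same approach'' or ``a different approach'' --- it can only be assessed as a standalone attempted argument.

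As a standalone argument it is not a proof, and to your credit you say so in your final paragraph. The outline (normalise by signs, run the Vieta involution \eqref{eq:vieta2} backwards, show the descent terminates at a triple of the form \eqref{eq:base_cases} up to the $S_3$-action \eqref{symm}, then invoke Fomin--Zelevinsky mutations to cover the ascending direction) is the natural strategy, but every instance of ``show'' and ``classify'' in it is doing real work that is not done. Concretely: (i) You need a size functional on Laurent-polynomial solutions for which one of the three Vieta neighbours is strictly smaller outside a bounded set. For an arbitrary $(X,Y,Z)\in\mathbb Z[x^{\pm1},y^{\pm1},z^{\pm1}]^3$ satisfying \eqref{MEq} there is no a priori positivity, so a positive specialisation such as $x=y=z=1$ (which works for the numerical Markov tree) may vanish or change sign, and cancellation can make total degree or Newton-polytope volume fail to be monotone under $Z\mapsto kXY-Z$. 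Nothing in the equation alone obviously controls the Newton polytope of an arbitrary solution. (ii) Even granting a good size functional, you would still need a genuine classification of all ``small'' solutions; ``a bounded search'' is not an argument for Laurent polynomials, where the search space for bounded degree is infinite (coefficients in $\mathbb Z$) unless you first bound coefficients, which again requires controlling cancellation. (iii) The final appeal to the cluster-algebra exchange graph only identifies the orbit of the seed $(x,y,z)$ under mutation; it says nothing about solutions not in that orbit, which is precisely what the conjecture asserts do not exist. In short, the proposal restates the conjecture as ``the descent works and the minimal solutions are the obvious ones'' without supplying the new idea that would make either half rigorous --- exactly the gap you identify at the end, and exactly why the authors leave this as a conjecture.
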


From now on we will focus only on the Markov polynomials corresponding to rationals in $[0, 1]$ as all others can be obtained by the above symmetries.

\section{Structure of Markov Polynomials}

The general structure of the Markov polynomial $M_{a/b}(x, y, z)$ can be described as follows.

\begin{theorem}
\label{denom}
Markov polynomials $M_{a/b}(x, y, z) $ with coprime $a,b>0$ have the following general form
\begin{equation}
\label{eqn:Rational Polynomials}
    M_{a/b}(x, y, z) = \frac{P_{a/b}(x^2, y^2, z^2)}{x^{a-1} y^{b-1} z^{a+b-1}},
\end{equation}
where $P_{a/b}(u,v,w)$ is a homogeneous polynomial of degree $a+b-1$ with non-negative integer coefficients, indivisible by $x$, $y$ or $z$.   
\end{theorem}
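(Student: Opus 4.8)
The plan is to argue by induction on the Conway topograph of Figure~\ref{fig:Rational Conway}, equivalently on $n=a+b$. The base cases are $M_{0/1}=x$, $M_{1/1}=(x^{2}+y^{2})/z$ and $M_{1/2}=\frac{(x^{2}+y^{2})^{2}+x^{2}z^{2}}{yz^{2}}$ from \eqref{eq:base_cases}; these have the form \eqref{eqn:Rational Polynomials} with
$$P_{0/1}=1,\qquad P_{1/1}(u,v,w)=u+v,\qquad P_{1/2}(u,v,w)=(u+v)^{2}+uw,$$
homogeneous of degrees $0$, $1$, $2$ respectively, with non-negative integer coefficients, and manifestly indivisible by $u$, $v$ or $w$. (We retain $M_{0/1}$, for which $a=0$ falls outside the stated range, only because it feeds the recursion below.) For the inductive step, recall from Figure~\ref{fig:Rational Conway} that each region $a/b$ with $a/b\in(0,1)$ and $a/b\neq1/2$ first appears at a vertex whose other two regions $\rho_{1}=a_{1}/b_{1}$, $\rho_{2}=a_{2}/b_{2}$ satisfy $a/b=\tfrac{a_{1}+a_{2}}{b_{1}+b_{2}}$, and that the edge separating $\rho_{1}$ from $\rho_{2}$ has a second endpoint at which the third region $\rho_{3}=a_{3}/b_{3}$ again lies in $[0,1]$. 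Relabelling so that $\rho_{1}=\rho_{2}\oplus\rho_{3}$ --- equivalently $(a_{3},b_{3})=(a_{1}-a_{2},\,b_{1}-b_{2})$ with $a_{1}\ge a_{2}\ge0$ and $b_{1}\ge b_{2}\ge1$ --- the Vieta relation \eqref{eq:vieta2} takes the form $M_{a/b}=k\,M_{\rho_{1}}M_{\rho_{2}}-M_{\rho_{3}}$, and $\rho_{1}$, $\rho_{2}$, $\rho_{3}$ all have strictly smaller value of $a+b$, so the induction hypothesis applies to them. The only subtlety here is geometric: the truncated topograph meets its boundary just along the root edge $\{0/1,1/1\}$, which is precisely why $0/1$, $1/1$ and their mediant $1/2$ must be base cases while every other region is produced by the recursion; and the remaining case $a>b$ reduces to the case $a<b$ by the symmetry \eqref{symm}, which permutes $x$, $y$, $z$.

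Next I would perform the substitution. Writing $M_{\rho_{i}}=\dfrac{P_{\rho_{i}}(x^{2},y^{2},z^{2})}{x^{a_{i}-1}y^{b_{i}-1}z^{a_{i}+b_{i}-1}}$ and $k=\dfrac{x^{2}+y^{2}+z^{2}}{xyz}$, one gets
$$k\,M_{\rho_{1}}M_{\rho_{2}}=\frac{(x^{2}+y^{2}+z^{2})\,P_{\rho_{1}}(x^{2},y^{2},z^{2})\,P_{\rho_{2}}(x^{2},y^{2},z^{2})}{x^{a-1}y^{b-1}z^{a+b-1}},$$
since $(a_{1}-1)+(a_{2}-1)+1=a-1$ and likewise for $y$ and $z$, and the numerator here is already a polynomial in $x^{2},y^{2},z^{2}$. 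In the subtracted term the exponents of the common denominator exceed those of $M_{\rho_{3}}$ by $2a_{2}$, $2b_{2}$ and $2(a_{2}+b_{2})$, all even, so
$$M_{\rho_{3}}=\frac{(x^{2})^{a_{2}}(y^{2})^{b_{2}}(z^{2})^{a_{2}+b_{2}}\,P_{\rho_{3}}(x^{2},y^{2},z^{2})}{x^{a-1}y^{b-1}z^{a+b-1}},$$
again with polynomial numerator in $x^{2},y^{2},z^{2}$. This parity observation is exactly what keeps the recursion inside $\mathbb{Z}[u,v,w]$ and dictates the shape of the denominator in \eqref{eqn:Rational Polynomials}. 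Comparing, $M_{a/b}$ has the stated form with
$$P_{a/b}(u,v,w)=(u+v+w)\,P_{\rho_{1}}(u,v,w)\,P_{\rho_{2}}(u,v,w)-u^{a_{2}}v^{b_{2}}w^{a_{2}+b_{2}}\,P_{\rho_{3}}(u,v,w).$$
Both terms on the right are homogeneous of degree $a+b-1$, so $P_{a/b}$ is homogeneous of degree $a+b-1$ with integer coefficients, and it is nonzero because $P_{a/b}(1,1,1)=m_{a/b}\ge1$.

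Non-negativity of the coefficients I would simply import from \cite{IMPV,Pro}: since $P_{a/b}(x^{2},y^{2},z^{2})=x^{a-1}y^{b-1}z^{a+b-1}M_{a/b}$ is $M_{a/b}$ multiplied by a monomial, its coefficients are a reindexing of the non-negative coefficients of the Laurent polynomial $M_{a/b}$, and the change of variables $u=x^{2}$, $v=y^{2}$, $w=z^{2}$ carries this over to $P_{a/b}$. I do not see how to read non-negativity off the recursion itself, which involves a subtraction; that is the arithmetic content of \cite{IMPV,Pro} rather than of the present statement.

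Finally, indivisibility, which --- together with the exponent bookkeeping above --- I expect to be the only genuinely delicate part. Setting $w=0$ in the recursion annihilates the subtracted term (as $a_{2}+b_{2}\ge1$) and leaves $P_{a/b}(u,v,0)=(u+v)\,P_{\rho_{1}}(u,v,0)\,P_{\rho_{2}}(u,v,0)$, nonzero by the induction hypothesis because $\mathbb{Z}[u,v]$ is a domain; hence $P_{a/b}$ is not divisible by $w$. Setting $v=0$ similarly annihilates the subtracted term (as $b_{2}\ge1$) and gives $P_{a/b}(u,0,w)=(u+w)\,P_{\rho_{1}}(u,0,w)\,P_{\rho_{2}}(u,0,w)\neq0$. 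Setting $u=0$, the subtracted term vanishes as long as $a_{2}\ge1$, i.e.\ as long as $\rho_{2}\neq0/1$, again leaving a product of nonzero polynomials; the sole exceptional case is $\rho_{2}=0/1$, which forces $a/b=1/n$ for some $n\ge3$. For that subfamily the recursion specialises (using $P_{0/1}=1$ and $\rho_{3}=1/(n-2)$) to the three-term linear recursion
$$P_{1/n}(u,v,w)=(u+v+w)\,P_{1/(n-1)}(u,v,w)-vw\,P_{1/(n-2)}(u,v,w),$$
and starting from $P_{1/1}(0,v,w)=v$ and $P_{1/2}(0,v,w)=v^{2}$ one finds $P_{1/n}(0,v,w)=v^{n}\neq0$ (the recursion has characteristic roots $v$ and $w$ at $u=0$), so $P_{1/n}$ is not divisible by $u$ either. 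This closes the induction. In summary, the main obstacles are (i) pinning down which three Markov polynomials feed the recursion and tracking their three exponent vectors correctly, and (ii) this $1/n$ sub-case of indivisibility; once the parity observation is in place the rest is forced, and non-negativity is imported from \cite{IMPV,Pro}.
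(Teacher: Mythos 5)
Your proof is correct and follows essentially the same route as the paper: induction along the Conway topograph using the alternative Vieta formula \eqref{eq:vieta2}, which produces the numerator recursion \eqref{eq:P_evolution}. The one place you genuinely add value is the indivisibility step. The paper dispatches it with the remark that any cancellation with the denominator would force one of the inherited numerators $P_{c/d}$ or $P_{(a+c)/(b+d)}$ to be divisible by $x$, $y$ or $z$; but that reasoning applies directly only when the monomial $u^{c}v^{d}w^{c+d}$ in the subtracted term of \eqref{eq:P_evolution} is itself divisible by the variable in question, which fails for $u$ precisely when $c=0$ (your $a_{2}=0$), i.e.\ when $0/1$ is one of the two retained regions --- exactly the Fibonacci branch $\rho=1/n$. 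You isolate and close this case by specialising at $u=0$, observing that the numerator recursion becomes a linear three-term recursion with characteristic roots $v$ and $w$, and checking $P_{1/n}(0,v,w)=v^{n}\neq0$; this is a genuine sharpening of the paper's terse argument. On non-negativity you and the paper agree in importing it from \cite{IMPV,Pro} rather than reproving it, which is the right call since the subtraction in \eqref{eq:P_evolution} gives no direct handle on sign.
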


We set here $P_{0/1}\equiv 1$ to have the correct formula for $M_{0/1}(x,y,z)=x.$



\begin{proof}

The proof is by induction using the Vieta formula in the form \eqref{eq:vieta2}.
    
\begin{figure}[H]
\begin{center}
	\begin{tikzpicture} 
        \node at (0, -0.5) {$M_{\rho_Z},\ \rho_Z = \frac{a}{b}$};
        \node at (-1.5, 1.75) {$M_{\rho_X},\ \rho_X = \frac{c}{d}$};
        \node at (1.5, 1.75) {$M_{\rho_Y},\ \rho_Y = \frac{a+c}{b+d}$};
        \node at (0, 4) {$M_{\rho_{Z'}},\ \rho_{Z'} = \frac{a+2c}{b+2d}$};
 
        \draw[thick] (0, 1) -- (0, 2.5);
        \draw[thick](-1, 0) -- (0, 1);
        \draw[thick] (0, 1) -- (1, 0);
        \draw[thick] (-1, 3.5) -- (0, 2.5);
        \draw[thick] (0, 2.5) -- (1, 3.5);
\end{tikzpicture}
\end{center}
\caption{Section of the Conway topograph showing correspondence.} 
\label{fig:Assume}
\end{figure}
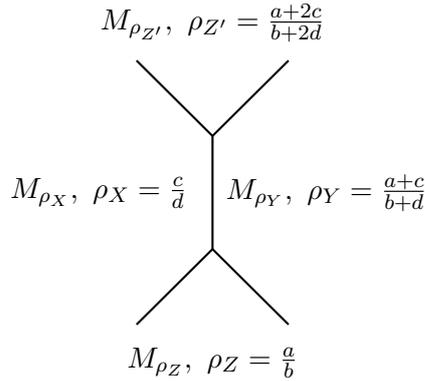

Consider the part of the Conway topograph shown in Figure~\ref{fig:Assume}.
We assume that the statement holds for $M_{\rho_X}$, $M_{\rho_Y}$ and
$M_{\rho_Z}$.  Then, according to \eqref{eq:vieta2},
\begin{align*}
  M_{\rho_{Z'}} &= k(x,y,z) M_{\rho_X} M_{\rho_Y} - M_{\rho_Z} \\
&= \frac{(x^2+y^2+z^2) P_{c/d}(x^2, y^2, z^2) P_{(a+c)/(b+d)}(x^2,y^2,z^2) - 
x^{2c}y^{2d}z^{2c+2d}P_{a/b}(x^2,y^2,z^2)}{x^{a+2c-1}y^{b+2d-1}z^{a+b+2c+2d-1}},
\end{align*}
where the denominator is of the required form.  We need only to be sure that 
there is
no cancellation of factors from the denominator, but for that to happen, one
of $P_{c/d}$ or $P_{(a+b)/(c+d)}$ would have to be divisible by $x$, $y$ or
$z$, which cannot happen.  We further see that the numerator satisfies the
required homogeneity property.
The base cases \eqref{eq:base_cases} satisfy \eqref{eqn:Rational Polynomials}
so the proof is complete.
  \end{proof}

As a consequence of the proof, we obtain a formula for the evolution of
the numerators along the Conway topograph:
\begin{equation}
  \label{eq:P_evolution}
  P_{\frac{a+2c}{b+2d}}= (u+v+w) 
P_{\frac{c}{d}}(u, v, w) P_{\frac{a+c}{b+d}}(u ,v ,w) - u^{c}v^{d}w^{c+d}P_{\frac{a}{b}}(u,v,w).
\end{equation}
  
  \subsection{Newton polygon of the numerator}

To study the numerator $P_{\rho}(u, v, w)$ of the Markov polynomial $M_{\rho}(x, y, z)$, $\rho=a/b$, it is natural to look first at the corresponding Newton polygon  $\Delta_{\rho}$, which can be considered as tropicalization of $P_{\rho}(u, v, w)$.

Whilst our polynomials consist of three variables $u, v, w$, by noting homogeneity of terms---
each term has total order in the numerator of $a+b-1$---we can simplify the problem (see Theorem~\ref{denom}). Indeed we can write $P_{\rho}$ as the following 
sum
\begin{equation}
\label{eqn:numerator}
    P_{\rho}(u, v, w) = \sum A_{ij} u^{i} v^{j} w^{a+b-1-i-j}.
\end{equation}
As a result of this homogeneity we can consider the plane projections $w = 1$, looking at the Newton polygon in the two variables $u, v$, and losing no additional information in the process. Note that this is now precisely the affine version ($w = 1$) of the Newton polygon for the polynomial $P_{\rho}(u, v, w)$.

For example, when $\rho = \frac{2}{3}, m_{\rho} = 29$ we have
\begin{displaymath}
    P_{\rho}(u, v, w) = u^4 + 4u^3 v + 6u^2 v^2 + 4u v^3 + v^4 + 2u^3 w + 5u^2 v w + 4u v^2 w + v^3 w + u^2 w^2,
\end{displaymath}
and setting $w = 1$ we obtain
\begin{equation*}
    P_{\rho}(u, v, w) = u^4 + 4u^3 v + 6u^2 v^2 + 4u v^3 + v^4 + 2u^3 + 5u^2 v + 4u v^2 + v^3 + u^2.
\end{equation*}

We define the Newton polygon of $M_{\rho}(x, y, z)$ as the convex hull 
\beq{Newton}
    \Delta_{\rho} = \Delta(P_{\rho}) := \Conv \lbrace (i, j): A_{ij} \neq 0 \rbrace \subset \mathbb{Z}^2.
\eeq

\begin{figure}[H]
    \begin{center}
    \begin{tikzpicture} [scale=0.6]
            \filldraw[blue!25] (8, 0) -- (0, 8) -- (0, 6) -- (4, 0) -- (8, 0);

            \draw[thick, ->] (0, 0) -- (0, 9);
            \draw[thick, ->] (0, 0) -- (9, 0);
            
            \filldraw [blue] (8, 0) circle (2pt); 
            \filldraw [blue] (6, 2) circle (2pt);
            \filldraw [blue] (4, 4) circle (2pt);
            \filldraw [blue] (2, 6) circle (2pt);
            \filldraw [blue] (0, 8) circle (2pt);

            \filldraw [blue] (6, 0) circle (2pt);
            \filldraw [blue] (4, 2) circle (2pt);
            \filldraw [blue] (2, 4) circle (2pt);
            \filldraw [blue] (0, 6) circle (2pt);
            
            \filldraw [blue] (4, 0) circle (2pt);

            \node at (8, -0.3) {4};
            \node at (6, -0.3) {3};
            \node at (4, -0.3) {2};
            \node at (2, -0.3) {1};
            \node at (0, -0.3) {0};

            \node at (-0.3, 8) {4};
            \node at (-0.3, 6) {3};
            \node at (-0.3, 4) {2};
            \node at (-0.3, 2) {1};
            \node at (-0.3, 0) {0};

            \node at (-0.3, 9) {$j$};
            \node at (9, -0.3) {$i$};
    \end{tikzpicture}
    \end{center}
    \caption{Newton polygon $\Delta_{\rho}$, of the Markov polynomial $M_{\rho}, \rho = \frac{2}{3}$.} \label{fig:NP29}
\end{figure}
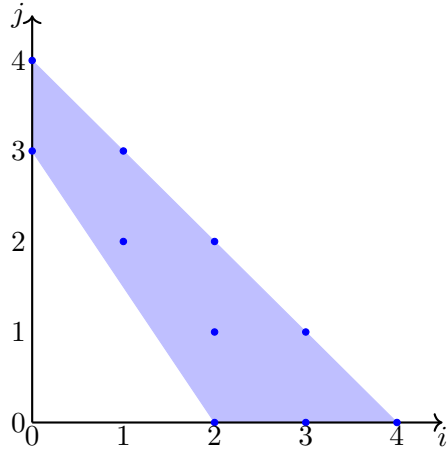

We have the following explicit description\footnote{We are very grateful to Alexey Ustinov for the stimulating discussions of the Newton polygons for Markov polynomials, which led to formula (\ref{eqn:Newton Poly}).} of the Newton polygons of the numerators of Markov polynomials in general case.\footnote{As we were informed by Ralf Schiffler, this result can be derived from the general description of Newton polygons for rank 3 cluster algebras \cite{LLS}.} 


\begin{theorem}
\label{Newton Poly}
    Given a rational $\rho=\frac{a}{b}$, the Newton polygon $\Delta_{a/b}$ of the numerator of Markov polynomial $M_{\rho}(x, y, z)$  is the area on the $ij$-plane with $i, j \geq 0$ satisfying the conditions
    \begin{equation}
    \label{eqn:Newton Poly}
         \left\{
        \begin{aligned}
            & \frac{i}{a} + \frac{j}{b} \geq 1 \\
            & i + j \leq a+b-1.
        \end{aligned}
        \right.
    \end{equation}
\end{theorem}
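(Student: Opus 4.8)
The strategy is induction on the Conway topograph, using the recursion \eqref{eq:P_evolution} for the numerators, exactly as in the proof of Theorem~\ref{denom}. For a vertex with neighbours labelled $\rho_Z=\tfrac ab$, $\rho_X=\tfrac cd$, $\rho_Y=\tfrac{a+c}{b+d}$ and offspring $\rho_{Z'}=\tfrac{a+2c}{b+2d}$, I would assume that $\Delta_{a/b}$, $\Delta_{c/d}$ and $\Delta_{(a+c)/(b+d)}$ are given by \eqref{eqn:Newton Poly} and deduce the same for $\Delta_{(a+2c)/(b+2d)}$. Here it is cleanest not to prove the two statements (the region is contained in the polygon; the region's lattice points are actually hit by nonzero coefficients) simultaneously but to separate them: first show $\Delta_{a/b}$ is \emph{contained} in the region defined by \eqref{eqn:Newton Poly} for all $\rho$, and then, using non-negativity of the coefficients (from \cite{Pro}, recalled after \eqref{MEq}), show that the two bounding lines $\tfrac ia+\tfrac jb=1$ and $i+j=a+b-1$ and the two axis segments are actually \emph{attained}, so that the convex hull is exactly that region.

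For the containment direction, I would work with the Newton polygon of a product and a difference. Recall that $\Delta(FG)=\Delta(F)+\Delta(G)$ (Minkowski sum) and $\Delta(F-G)\subseteq \Conv(\Delta(F)\cup\Delta(G))$. Applying this to \eqref{eq:P_evolution}: the first term contributes $\Delta(u+v+w)+\Delta_{c/d}+\Delta_{(a+c)/(b+d)}$ and the second contributes $(c,d)+\Delta_{a/b}$ (translation by the exponent vector of the monomial $u^cv^dw^{c+d}$, which in the $w=1$ affine picture is the point $(c,d)$). So $\Delta_{(a+2c)/(b+2d)}$ is contained in the convex hull of the union of these two sets. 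A direct computation then shows that each of the two inequalities in \eqref{eqn:Newton Poly} for parameters $(a+2c,b+2d)$ is a consequence of the corresponding inequalities for $(a,b)$, $(c,d)$, $(a+c,b+d)$ together with the obvious inequalities for the simplex $\Delta(u+v+w)=\Conv\{(1,0),(0,1),(0,0)\}$; the degree bound $i+j\le a+b-1$ is purely additive and is immediate, while the lower bound $\tfrac ia+\tfrac jb\ge1$ needs the Farey/unimodularity relations $|ad-bc|=1$ linking the four fractions, applied to the extreme points of the Minkowski sum.

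For the attainment direction, I would again induct: the inductive hypothesis gives that the vertices of $\Delta_{c/d}$, $\Delta_{(a+c)/(b+d)}$ and $\Delta_{a/b}$ carry strictly positive coefficients; because all coefficients of $P_{c/d}$, $P_{(a+c)/(b+d)}$ are non-negative and that of $(u+v+w)$ are positive, the extreme vertices of the Minkowski sum of the first term of \eqref{eq:P_evolution} receive a strictly positive contribution, and since the subtracted term $u^cv^dw^{c+d}P_{a/b}$ is supported strictly inside (this is where one checks, via $|ad-bc|=1$, that $(c,d)+\Delta_{a/b}$ touches $\partial\Delta_{(a+2c)/(b+2d)}$ only at points that are \emph{not} vertices, or is dominated there by the positive term), there is no cancellation at the vertices. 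It then suffices to verify the finitely many vertices of the claimed polygon — there are at most five: $(a+b-1,0)$, $(0,a+b-1)$ on the outer line, $(a,0)$ and $(0,b)$ on the inner line, and possibly the intersection vertex of the two lines — are precisely the extreme points produced, and to check the base cases \eqref{eq:base_cases} by hand.

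The main obstacle is the lower inequality $\tfrac ia+\tfrac jb\ge 1$ in the containment step: unlike the additive degree bound, this constraint does not behave linearly under the Minkowski sum, so one must genuinely use the Farey relations among $\tfrac ab,\tfrac cd,\tfrac{a+c}{b+d}$ — in particular $bc-ad=\pm1$ and $b(a+c)-a(b+d)=\pm1$ — to see that a point of the form $p+q+r$ with $p\in\Delta(u+v+w)$, $q\in\Delta_{c/d}$, $r\in\Delta_{(a+c)/(b+d)}$ (or a point of $(c,d)+\Delta_{a/b}$) satisfies the inequality for $(a+2c,b+2d)$. Getting the bookkeeping of these $\pm1$'s right, and confirming that the difference operation in \eqref{eq:P_evolution} cannot enlarge the polygon beyond the claimed region, is the delicate part; everything else is routine induction and checking of base cases.
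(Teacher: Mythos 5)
Your proposal takes a genuinely different route from the paper, and it contains a real gap in the containment step.

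\textbf{Different route.} You work from the subtractive recursion \eqref{eq:P_evolution}, i.e.\ from the form $Z' = kXY - Z$ of the Vieta involution. The paper instead uses the multiplicative form $ZZ' = X^2 + Y^2$. Cleared of denominators in the variables $u,v,w$, that reads
$P_{a/b}\,P_{(a+2c)/(b+2d)} = u^a v^b w^{a+b} P_{c/d}^2 + P_{(a+c)/(b+d)}^2$.
The right-hand side is a \emph{sum} of Laurent polynomials with non-negative coefficients, so there is no cancellation to worry about and its Newton polygon is exactly $\Conv\bigl(((a,b)+2\Delta_{c/d}) \cup 2\Delta_{(a+c)/(b+d)}\bigr)$. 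The left-hand side is a product, so its Newton polygon is the Minkowski sum $\Delta_{a/b} + \Delta_{(a+2c)/(b+2d)}$, and the Minkowski cancellation law for convex sets then pins down $\Delta_{(a+2c)/(b+2d)}$ uniquely. The entire argument stays at the polytope level.

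\textbf{The gap.} In your containment step you claim that both the support of $(u+v+w)P_{c/d}P_{(a+c)/(b+d)}$ and the translated support $(c,d)+\Delta_{a/b}$ of the subtracted term lie inside the region \eqref{eqn:Newton Poly} for $(a+2c,b+2d)$, so that $\Delta\subseteq\Conv(\Delta(F)\cup\Delta(G))$ finishes the containment. This is false. Check the corner $(0,b)\in\Delta_{a/b}$: the translated point $(c,\,b+d)$ satisfies $\frac{c}{a+2c}+\frac{b+d}{b+2d}\geq 1$ if and only if $cb\geq ad$, while the corner $(a,0)$ leads to the condition $ad\geq cb$. Since $|ad-cb|=1$ for Farey neighbours, exactly one corner of $(c,d)+\Delta_{a/b}$ always escapes the claimed lower bound. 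Concretely, take $\rho_Z=\tfrac11$, $\rho_X=\tfrac01$, $\rho_Y=\tfrac12$, $\rho_{Z'}=\tfrac13$: the point $(0,2)=(0,1)+(0,1)\in(c,d)+\Delta_{1/1}$ fails $\tfrac01+\tfrac23\geq 1$, yet direct expansion of \eqref{eq:P_evolution} shows the $v^2w$ terms cancel exactly ($1\cdot v^2w$ in each summand), so $(0,2)$ is indeed absent from $\Delta_{1/3}$.

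So cancellation is not only a concern for the attainment direction; it is \emph{forced} already in the containment direction, and it must be proved as a coefficient identity (that $(u+v+w)P_{c/d}P_{(a+c)/(b+d)}$ and $u^cv^dw^{c+d}P_{a/b}$ agree on the lattice points of $(c,d)+\Delta_{a/b}$ that violate the lower bound). That is precisely the kind of term-by-term work the paper's multiplicative identity is designed to avoid. Your plan can be repaired, but it is substantially more than the "routine bookkeeping of $\pm1$'s" you describe; the Farey relation tells you exactly \emph{where} the subtracted polygon sticks out, and on that sliver you need the coefficients to match, not just the supports.
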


The proof is by induction. The first few cases are easily verifiable by direct computations.
For the inductive step, we use the Vieta formula $ZZ' = X^2 + Y^2$ and verify that the Minkowski sum of the Newton polygons on the LHS coincides with the convex hull of the union of the scaled (due to squared terms) polygons on the RHS, taking care with the effect of the denominators on the subsequent numerator during the addition on the RHS. 
This is straightforward but a bit technical, so we omit the details.

We know due to \cite{Pro}  that  Markov polynomials have non-negative coefficients, but some of the terms corresponding to the lattice points in the Newton polygon $\Delta_{\rho}$, in principle, could be zero. We conjecture that this is not the case and all the corresponding coefficients are positive.


\begin{conjecture} (Saturation conjecture)
\label{Saturation}
 The   terms that appear in the numerator of a Markov polynomial $M_{\rho}$ are precisely those corresponding to the integer lattice points in the Newton polygon $\Delta_{\rho}$. 
\end{conjecture}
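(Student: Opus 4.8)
Here we outline a possible strategy for proving Conjecture~\ref{Saturation}. By Theorem~\ref{Newton Poly} the support of $P_\rho$ lies in $\Delta_\rho\cap\mathbb Z^2$, and by the non-negativity result of \cite{Pro} all coefficients $A_{ij}$ are non-negative, so the conjecture is equivalent to the assertion that $A_{ij}>0$ for \emph{every} lattice point $(i,j)\in\Delta_\rho$. The plan is to prove this by induction along the Conway topograph, with base cases \eqref{eq:base_cases} and inductive step provided by \eqref{eq:P_evolution}. Two elementary inputs are used throughout. First, a product of polynomials with non-negative coefficients has no cancellation, so its support is the set-theoretic Minkowski sum of the supports; since for lattice polygons in the plane $(\Delta'+\Delta'')\cap\mathbb Z^2=(\Delta'\cap\mathbb Z^2)+(\Delta''\cap\mathbb Z^2)$, it follows that a product of non-negative polynomials, each \emph{saturated} in the sense that its support equals all of $\Delta\cap\mathbb Z^2$, is again saturated. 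Second, a short induction from \eqref{eq:P_evolution} gives the boundary restrictions
\[
  P_{a/b}(u,v,0)=(u+v)^{a+b-1},\qquad P_{a/b}(0,v,w)=v^{b}(v+w)^{a-1},\qquad P_{a/b}(u,0,w)=u^{a}(u+w)^{b-1},
\]
all of whose coefficients are binomial, hence positive; thus every lattice point on $\partial\Delta_\rho$ is already occupied, and only interior points remain.

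For the inductive step I would write \eqref{eq:P_evolution} as $P_{\rho_{Z'}}=(u+v+w)\,P_{\rho_X}P_{\rho_Y}-u^{c}v^{d}w^{c+d}P_{\rho_Z}$ with $\rho_X=c/d$, $\rho_Z=a/b$. By the inductive hypothesis and the first input, $(u+v+w)P_{\rho_X}P_{\rho_Y}$ is saturated, with Newton polygon $T+\Delta_{\rho_X}+\Delta_{\rho_Y}$ where $T=\Conv\{(0,0),(1,0),(0,1)\}$, and this polygon contains $\Delta_{\rho_{Z'}}$ because $P_{\rho_{Z'}}$ has non-negative coefficients. The subtracted term is saturated with support $(c,d)+(\Delta_{\rho_Z}\cap\mathbb Z^2)$. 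Hence for a lattice point $q\in\Delta_{\rho_{Z'}}$ that does \emph{not} lie in $(c,d)+\Delta_{\rho_Z}$ the coefficient of $P_{\rho_{Z'}}$ at $q$ equals that of the product, which is positive; and by Theorem~\ref{Newton Poly} the ``overshoot'' of $T+\Delta_{\rho_X}+\Delta_{\rho_Y}$ beyond $\Delta_{\rho_{Z'}}$ must lie inside $(c,d)+\Delta_{\rho_Z}$ and cancel there exactly. Thus everything reduces to a single strict inequality: for every lattice point $q$ lying in \emph{both} $\Delta_{\rho_{Z'}}$ and $(c,d)+\Delta_{\rho_Z}$,
\[
  \bigl[(u+v+w)\,P_{\rho_X}P_{\rho_Y}\bigr]_{q}\;>\;\bigl[P_{\rho_Z}\bigr]_{q-(c,d)},
\]
i.e. the Vieta subtraction never consumes a coefficient in full.

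This strict inequality is the real obstacle, since \cite{Pro} only yields ``$\geq$''. I would approach it along two complementary lines. The first is to strengthen the inductive invariant — for instance to unimodality, or merely gap-freeness, of the coefficients of $P_\rho$ along every affine line meeting $\Delta_\rho$ (which, together with the positive boundary values above, forces interior coefficients to be positive); here the substantive point, and the crux of this route, is to show that such a property propagates through \eqref{eq:P_evolution} despite the subtraction. The second is to bypass the subtraction via the other form of Vieta, $ZZ'=X^2+Y^2$, which at the level of numerators reads
\[
  P_{\rho_Z}\,P_{\rho_{Z'}} \;=\; u^{a}v^{b}w^{a+b}\,P_{\rho_X}^{2} \;+\; P_{\rho_Y}^{2},
\]
with all coefficients non-negative; taking supports and using the inductive hypothesis and the two inputs gives
\[
  \operatorname{supp}(P_{\rho_{Z'}})+\bigl(\Delta_{\rho_Z}\cap\mathbb Z^2\bigr)\;=\;\bigl((a,b)+2\Delta_{\rho_X}\bigr)\cap\mathbb Z^2\ \cup\ \bigl(2\Delta_{\rho_Y}\bigr)\cap\mathbb Z^2,
\]
and the right-hand side has convex hull $\Delta_{\rho_Z}+\Delta_{\rho_{Z'}}$ — exactly the Minkowski computation behind Theorem~\ref{Newton Poly}. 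The conjecture would then follow from a Minkowski-type cancellation principle for the planar lattice sets involved (roughly, that $A+C=B+C$ with $\Conv A=\Conv B$ and $C=\Delta_{\rho_Z}\cap\mathbb Z^2$ forces $A=B$), together with the checkable fact that the right-hand side exhausts the lattice points of its convex hull. Supplying this cancellation argument — or, failing that, invoking the combinatorial model underlying the non-negativity result of \cite{IMPV,Pro} and exhibiting by a direct construction a configuration of every admissible bidegree — is where I expect the difficulty to be concentrated.
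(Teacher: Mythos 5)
Note first that Conjecture~\ref{Saturation} is stated in the paper as a conjecture, not a theorem, so there is no proof of it in the paper to compare against. The paper establishes it only in two special families: for the Markov--Fibonacci polynomials $M_{1/n}$ (Corollary~\ref{Fibonacci Saturation}, immediate from the closed formula~\eqref{eqn:Fib Coeff} expressing every $A_{ij}$ as a manifestly positive product of binomials), and for the Markov--Pell polynomials $M_{n/(n+1)}$ (Corollary~\ref{Pell Saturation}, from the subtraction-free recurrences~\eqref{Pell Rec OddEven}). The paper also records that the Log-Concavity Conjecture~\ref{Markovlogconcave} would imply saturation, since the boundary coefficients are positive binomials and a vanishing interior coefficient would violate~\eqref{logcon}.

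Your proposal correctly isolates the genuine obstacle --- the Vieta subtraction in~\eqref{eq:P_evolution} --- and you are candid that neither of your two routes (a strengthened inductive invariant, or a Minkowski cancellation lemma via $ZZ'=X^2+Y^2$) is closed, so there is no error as such, only the expected absence of a proof. Two cautions are worth recording. First, your ``elementary input'' that $(\Delta'+\Delta'')\cap\mathbb Z^2=(\Delta'\cap\mathbb Z^2)+(\Delta''\cap\mathbb Z^2)$ for lattice polygons is not a formality: it fails already for the saturated primitive segments $\Conv\{(0,0),(2,1)\}$ and $\Conv\{(0,0),(1,2)\}$, whose Minkowski sum contains $(1,1)$, which is not a sum of endpoints; and the recursion does involve degenerate polygons (indeed $\Delta_{0/1}$ is a point and $\Delta_{1/1}$ a segment). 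Even for full-dimensional lattice polygons the joint integer-decomposition property is a theorem to be invoked, not a triviality, so the step ``the product $(u+v+w)P_{\rho_X}P_{\rho_Y}$ is saturated'' needs an argument specific to the trapezoids of Theorem~\ref{Newton Poly}. Second, the cancellation principle $A+C=B+C\Rightarrow A=B$ for finite lattice sets with $\Conv A=\Conv B$ is false without further structural hypotheses on $A,B,C$, so your second route also requires something tailored to these polygons. In short, the reduction is reasonable and is broadly in the spirit of the paper's own conditional remark relating Conjectures~\ref{Saturation} and~\ref{Markovlogconcave}, but Conjecture~\ref{Saturation} remains open after it.
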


Note that in the original $(x,y,z)$ variables the Newton polygon is twice 
larger and saturation does not hold.

We will prove Conjecture~\ref{Saturation} in certain special cases of Markov 
polynomials (see Section~\ref{sec:funf}).

We should also mention in this relation an important result by Fei \cite{Fei}, who proved saturation for the $F$-polynomials  of a rigid representation of any finite-dimensional basic algebra.


\section{Coefficients of Markov polynomials}

We can view the coefficients $A_{ij}$ of Markov polynomial 
\begin{equation*}
    M_{a/b}(x, y, z) = \frac{1}{x^{a-1} y^{b-1} z^{a+b-1}} \sum_{i + j \leq a+b-1, \frac{i}{a} + \frac{j}{b} \geq 1} A_{ij} x^{2i} y^{2j} z^{2(a+b-1-i-j)}
\end{equation*}
as the weights on the lattice points of Newton polygon.


For example, for $\rho = \frac{2}{3}$ the `weighted' polygon, $\Delta_{\rho}$ is given in Figure \ref{fig:wNP29}.
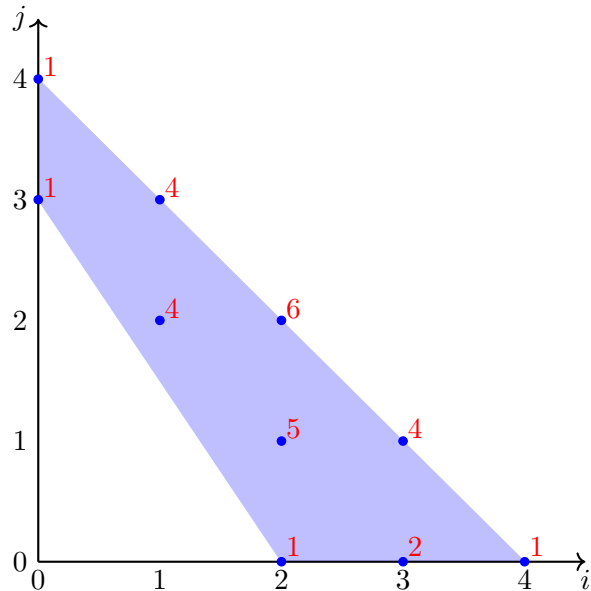
\begin{figure}[H]
    \begin{center}
    \begin{tikzpicture}  [scale=0.8]
            \filldraw[blue!25] (8, 0) -- (0, 8) -- (0, 6) -- (4, 0) -- (8, 0);

            \draw[thick, ->] (0, 0) -- (0, 9);
            \draw[thick, ->] (0, 0) -- (9, 0);
            
            \filldraw [blue] (8, 0) circle (2pt);
            \node[red] at (8.2, 0.25) {1};
            \filldraw [blue] (6, 2) circle (2pt);
            \node[red] at (6.2, 2.2) {4};
            \filldraw [blue] (4, 4) circle (2pt);
            \node[red] at (4.2, 4.2) {6};
            \filldraw [blue] (2, 6) circle (2pt);
            \node[red] at (2.2, 6.2) {4};
            \filldraw [blue] (0, 8) circle (2pt);
            \node[red] at (0.2, 8.2) {1};

            \filldraw [blue] (6, 0) circle (2pt);
            \node[red] at (6.2, 0.25) {2};
            \filldraw [blue] (4, 2) circle (2pt);
            \node[red] at (4.2, 2.2) {5};
            \filldraw [blue] (2, 4) circle (2pt);
            \node[red] at (2.2, 4.2) {4};
            \filldraw [blue] (0, 6) circle (2pt);
            \node[red] at (0.2, 6.2) {1};
            
            \filldraw [blue] (4, 0) circle (2pt);
            \node[red] at (4.2, 0.25) {1};

            \node at (8, -0.3) {4};
            \node at (6, -0.3) {3};
            \node at (4, -0.3) {2};
            \node at (2, -0.3) {1};
            \node at (0, -0.3) {0};

            \node at (-0.3, 8) {4};
            \node at (-0.3, 6) {3};
            \node at (-0.3, 4) {2};
            \node at (-0.3, 2) {1};
            \node at (-0.3, 0) {0};

            \node at (-0.3, 9) {$j$};
            \node at (9, -0.3) {$i$};
    \end{tikzpicture}
    \end{center}
    \caption{`Weighted' Newton polygon of the Markov polynomial $M_{2/3}$.} \label{fig:wNP29}
\end{figure}
Looking at the coefficients that appear on the lines $i = 0, j = 0$
and $i + j = a + b - 1 = 4$, we notice that they are binomial coefficients. 
This is no coincidence as we prove below.

Expanding the Markov polynomials as Laurent series in $z,y,x$ leads respectively to
$$
    M_{a/b}(x, y, z) = \frac{1}{x^{a-1} y^{b-1}} \left[\frac{T_0(x^2, y^2)}{z^{a+b-1}} + \frac{T_1(x^2, y^2)}{z^{a+b-3}} + \dots + \frac{T_k(x^2, y^2)}{z^{a+b-1-2k}} + \cdots\right]
    $$
    $$
    = \frac{1}{x^{a-1} z^{a+b-1}} \left[\frac{R_0(x^2, z^2)}{y^{b-1}} + \frac{R_1(x^2, z^2)}{z^{b-3}} + \dots + \frac{R_k(x^2, z^2)}{y^{b-1-2k}} + \cdots\right] 
    $$
    $$
    = \frac{1}{y^{b-1} z^{a+b-1}} \left[\frac{S_0(y^2, z^2)}{x^{a-1}} + \frac{S_1(y^2, z^2)}{x^{a-3}} + \dots + \frac{S_k(y^2, z^2)}{x^{a-1-2k}} + \cdots\right]
$$
for certain polynomials $T_i, R_i, S_i$.
These polynomials with $i=0$  contain information about the coefficients on the boundary of the Newton polygon ($S_0$ on the vertical, $R_0$ on the horizontal and $T_0$ on the upper diagonal line), while $i=1,2,\ldots$ correspond to the neighbouring parallel lines.

\begin{theorem}
In the following cases the polynomials $S_i,R_i,T_i$ can be written explicitly 
as:
\begin{itemize}
    \item $S_0(v, w) = v^b (v + w)^{a-1}$ \\
    \item $R_0(u, w) = u^a (u + w)^{b-1}$ \\
    \item $R_1(u, w) = u^a(3a - 1) (u + w)^{b - 2} + u^{a+1} (b - 2a) (u + w)^{b -3}$ \\
    \item $T_0(u, v) = (u + v)^{a+b-1}$ \\
    \item $T_1(u, v) = (a - 1) (u + v)^{a + b - 2} + u (b - a) (u + v)^{a + b -3} $ \\
    \item $
T_2(u, v) = \frac{(a - 1)(a - 2)}{2} (u + v)^{a+b-3} + u [a(b - a) - a] (u + v)^{a+b-4} \\
       \qquad+ \frac{1}{2} u^2 [(b-a)^2 + 5a - 3b] (u + v)^{a+b-5}.     $
\end{itemize}
\end{theorem}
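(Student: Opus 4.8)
The plan is to prove all six formulae by induction along the Conway topograph using the numerator recursion \eqref{eq:P_evolution}, exactly as in the proof of Theorem~\ref{denom}. The key observation is that the leading coefficients $S_0,R_0,T_0$ and the subleading ones $S_1,R_1,T_1,T_2$ are obtained by extracting, from the numerator $P_{\rho}(u,v,w)$ written in the appropriate coordinates, the coefficient of the top (or second, or third) power of one distinguished variable. For instance, passing to the variables where we expand in $z$: if $P_{a/b}(u,v,w)=\sum_k T_k(u,v)\,w^{?}$ in the right degrees, then plugging into \eqref{eq:P_evolution} and tracking the lowest powers of $w$ shows that the $T$-series of $M_{\rho_{Z'}}$ is determined by the $T$-series of $M_{\rho_X}$, $M_{\rho_Y}$, $M_{\rho_Z}$ together with the combinatorial data $(a,b,c,d)$ of Figure~\ref{fig:Assume}; the factor $u^cv^dw^{c+d}$ multiplying $P_{a/b}$ shifts its $w$-grading by $c+d$, so it does not contribute to the lowest few coefficients once $c,d\ge 1$, and the $(u+v+w)$ prefactor only mixes consecutive $T_k$'s.

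First I would set up the bookkeeping: fix the triangle of Figure~\ref{fig:Assume} with $\rho_X=c/d$, $\rho_Y=(a+c)/(b+d)$, $\rho_Z=a/b$, $\rho_{Z'}=(a+2c)/(b+2d)$, and note the parameter updates $a'=a+2c$, $b'=b+2d$. For the $T$-series one computes $w$-adically: since $P_{a/b}$ enters \eqref{eq:P_evolution} multiplied by $u^cv^dw^{c+d}$ with $c,d\ge1$ on the inductive branch, only the product term $(u+v+w)P_{c/d}P_{(a+c)/(b+d)}$ matters for $T_0,T_1,T_2$. Writing $(u+v+w)=(u+v)+w$ and using the inductive forms for the $T$-series of $M_{\rho_X}$ and $M_{\rho_Y}$, the claim $T_0=(u+v)^{a'+b'-1}$ reduces to $(u+v)^{c+d-1}\cdot(u+v)^{(a+c)+(b+d)-1}=(u+v)^{a'+b'-1}$, which is immediate; $T_1$ and $T_2$ follow by collecting the next one or two $w$-coefficients, which produces a small linear recursion in $(a,b)$ that one checks is solved by the asserted quadratic-in-$(a,b)$ expressions. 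The formulae for $S_0$ and $R_0,R_1$ are handled the same way, expanding instead in $x$ (resp. $y$); here one must also recall that the denominators in \eqref{eqn:Rational Polynomials} shift the exponents, which is why the $S$/$R$ formulae are stated for the numerator polynomials and carry the extra monomial prefactors $v^b$, $u^a$.

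The base cases are supplied by \eqref{eq:base_cases}: for $M_{0/1}=x$ we set $P_{0/1}\equiv1$ and check $a=0,b=1$ gives $S_0=v$, $R_0=1$, $T_0=1$ (degenerate but consistent), and for $M_{1/1}$ and $M_{1/2}$ one verifies all six expressions by hand against the explicit numerators; one should also treat the ``boundary'' branches of the topograph where one of $c,d$ is $0$ (i.e. $\rho_X=0/1$ or $\rho_X=1/1$), since then the shift argument for the $P_{a/b}$-term is different and that term does contribute — these finitely many boundary strips need a separate but elementary check. I expect the main obstacle to be purely administrative rather than conceptual: keeping the $w$-degree (equivalently $z$-degree) bookkeeping consistent between the homogeneous numerator $P_\rho(u,v,w)$ and its dehomogenised/Laurent-expanded avatars, and correctly propagating the effect of the monomial denominators when passing from the $M$-recursion to statements about $S_i,R_i,T_i$; once the grading conventions are pinned down, verifying that the asserted polynomials satisfy the resulting one-step recursions is a routine, if slightly tedious, computation with binomial identities. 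I would organise the write-up as three parallel inductions ($S$, $R$, $T$), proving $T_0,T_1,T_2$ in that order (each using the previous), and similarly $R_0$ then $R_1$, with $S_0$ shortest.
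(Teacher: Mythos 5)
Your approach matches the paper's: induction along the Conway topograph via the numerator recursion \eqref{eq:P_evolution}, extracting successive coefficients in the distinguished variable and observing that the $u^cv^dw^{c+d}P_{a/b}$ term is negligible (or contributes a controllable correction) for the leading coefficients; the paper carries this out only for $T_0$ and declares the rest similar, which your write-up fleshes out with more care on the boundary branches $\rho_X\in\{0/1,1/1\}$. One small slip: your displayed reduction for $T_0$ omits the extra factor of $(u+v)$ coming from the $(u+v+w)$ prefactor --- it should read $(u+v)\cdot(u+v)^{c+d-1}\cdot(u+v)^{(a+c)+(b+d)-1}=(u+v)^{a'+b'-1}$, which then checks with $a'+b'-1=a+b+2c+2d-1$.
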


\begin{proof}
    We will prove only the formula for $T_0$; all other cases are similar.

It is clear from \eqref{eq:base_cases} that $P_{0/1}, P_{1/1}, P_{1/2}$ all 
have the required form, so let us consider again the section of the
Conway topograph in Figure~\ref{fig:Assume}, and suppose that
\begin{align*}
  P_{\rho_X}(u,v,w) = T_{0,X}(u,v) + \Ord(w) &= (u+v)^{c+d-1} + \Ord(w) \\
  P_{\rho_Y}(u,v,w) = T_{0,Y}(u,v) + \Ord(w) &= (u+v)^{a+b+c+d-1} + \Ord(w)\\
    P_{\rho_Z}(u,v,w) = T_{0,Z}(u,v) + \Ord(w)& = (u+v)^{a+b-1} + \Ord(w),
 \end{align*} 
where $\Ord(w)$ denotes a polynomial function of $u,v,w$ with $w$ as a 
factor.

From \eqref{eq:P_evolution} we have
\begin{align*}
  P_{\rho_{Z'}}(u,v,w) &= (u+v+w)P_{\rho_X}(u,v,w) P_{\rho_Y}(u,v,w) -
u^cv^dw^{c+d} P_{\rho_Z}(u,v,w) \\
&=(u+v)(u+v)^{c+d-1}(u+v)^{a+b+c+d-1} + \Ord(w) \\
&=(u+v)^{a+b+2c+2d-1} + \Ord(w),
\end{align*}
giving us the expected form for $P_{\rho_{Z'}}(u,v,w)$ with
$\rho_{Z'} = (a+2c)/(b+2d).$
\end{proof}

As a corollary we have an explicit form of the following coefficients
(see Figure~\ref{fig:Coeff}).

\begin{theorem}
\label{2nd Hor}
\label{Coefficients}
    For the Markov polynomial  $M_{a/b}(x, y, z)$ we know the coefficients $A_{ij}$ in the following cases:
    \begin{itemize}
        \item $\displaystyle A_{0,j} = \binom{a-1}{j-b}$ \\
        \item $\displaystyle A_{i,0} = \binom{b-1}{i-a}$ \\
        \item $\displaystyle A_{i, 1} = (3a - 1) \binom{b-2}{i-a} + (b - 2) \binom{b-3}{i-a-1}$ \\
        \item If $i + j = a + b - 1$ then
$\displaystyle A_{i,j} = \binom{a+b-1}{i}$,
        \item If $i + j = a + b - 2$ then
$$A_{i,j} = (a - 1) \binom{a+b-2}{i} + (b - a) \binom{a+b-3}{i-1}$$
        \item If $i + j = a + b - 3$ then
\begin{align*}
A_{i,j} &=
                \frac{(a - 1)(a - 2)}{2} \binom{a+b-3}{i} + [a(b-a) - a] \binom{a+b-4}{i-1} \\ &\qquad + \frac{1}{2}[(b-a)^2 + 5a - 3b] \binom{a+b-5}{i-2} 
\end{align*}
\end{itemize}
\end{theorem}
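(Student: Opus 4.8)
The plan is to derive Theorem~\ref{Coefficients} directly from the previous theorem giving the closed forms of $S_i, R_i, T_i$, since the $A_{ij}$ are, by construction, precisely the coefficients of these slice-polynomials. Concretely, recall from the Laurent expansions that
\[
  M_{a/b}(x,y,z) = \frac{1}{x^{a-1} z^{a+b-1}}\sum_{k\ge 0} \frac{R_k(x^2,z^2)}{y^{b-1-2k}},
\]
so comparing with the expansion $M_{a/b} = x^{1-a}y^{1-b}z^{1-a-b}\sum A_{ij} x^{2i}y^{2j}z^{2(a+b-1-i-j)}$ shows that the terms with $j = k$ are exactly the monomials of $R_k(x^2,z^2)/(x^{a-1}z^{a+b-1})$, i.e.\ $\sum_i A_{i,k}\, x^{2i} z^{2(a+b-1-i-k)} = x^{1-a}z^{1-a-b}\cdot y^{2k}\cdot$ (reorganised). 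The upshot is that $A_{i,j}$ for fixed small $j$ is read off as a coefficient of $R_j$; for fixed small $i$, as a coefficient of $S_i$; and for fixed $i+j$ near $a+b-1$, as a coefficient of $T_{a+b-1-i-j}$. So the whole theorem reduces to extracting coefficients from the six explicit polynomials already established.

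The key steps, carried out one bullet at a time, are then pure binomial-coefficient extraction. For the diagonal line $i+j = a+b-1$: the coefficient of $u^i v^j$ in $T_0(u,v) = (u+v)^{a+b-1}$ is $\binom{a+b-1}{i}$, giving $A_{i,j} = \binom{a+b-1}{i}$. For $i+j = a+b-2$: extract the coefficient of $u^i v^j$ from $T_1(u,v) = (a-1)(u+v)^{a+b-2} + u(b-a)(u+v)^{a+b-3}$; the first term contributes $(a-1)\binom{a+b-2}{i}$ and the second, after pulling out the factor $u$, contributes $(b-a)\binom{a+b-3}{i-1}$. The $i+j = a+b-3$ case is identical in spirit, reading off three binomial contributions from the three terms of $T_2$. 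For the horizontal line $j = 0$: the coefficient of $u^i$ in $R_0(u,w) = u^a(u+w)^{b-1}$ at the relevant power is $\binom{b-1}{i-a}$. For $j = 1$: from $R_1(u,w) = u^a(3a-1)(u+w)^{b-2} + u^{a+1}(b-2a)(u+w)^{b-3}$ one reads $A_{i,1} = (3a-1)\binom{b-2}{i-a} + (b-2a)\binom{b-3}{i-a-1}$; note the stated theorem writes the second coefficient as $(b-2)$ rather than $(b-2a)$, so one should double-check the bookkeeping against the $R_1$ formula and reconcile (I expect $(b-2a)$ is correct and the $(b-2)$ in the statement is a typo to be fixed). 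For the vertical line $i = 0$: the coefficient of $v^j$ in $S_0(v,w) = v^b(v+w)^{a-1}$ gives $A_{0,j} = \binom{a-1}{j-b}$. In each case one also needs to track that the power of the third variable is pinned down by homogeneity, so there is genuinely only one monomial to match.

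The main obstacle is purely clerical rather than conceptual: getting the index shifts exactly right. When one writes $T_k(u,v)$ as a sum of terms of the shape $c_m\, u^{m}(u+v)^{a+b-1-2k-m}$, the coefficient of $u^i v^j$ with $i+j = a+b-1-2k$ picks up $\binom{a+b-1-2k-m}{i-m}$, and one must be careful that this binomial is read as zero when $i < m$ or when the top is negative, matching the convention that $A_{ij}=0$ off the Newton polygon $\Delta_{a/b}$. Similarly for the $R$- and $S$-slices the factor $u^a$ (resp.\ $v^b$) forces $i \ge a$ (resp.\ $j \ge b$), which is exactly the constraint $\frac ia + \frac jb \ge 1$ on that edge of $\Delta_{a/b}$ from Theorem~\ref{Newton Poly}, so consistency is automatic but should be remarked. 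Because the heavy lifting (finding $S_i, R_i, T_i$ themselves) was already done by induction on the Conway topograph in the preceding theorem, this proof is short: it is just substitution $u = x^2$, $v = y^2$, $w = z^2$, followed by the binomial theorem applied to each summand.
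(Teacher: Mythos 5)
Your proposal is correct and matches the paper's own route exactly: the paper presents Theorem~\ref{Coefficients} as an immediate corollary of the preceding theorem giving $S_0,R_0,R_1,T_0,T_1,T_2$ explicitly, so the only content is the binomial-coefficient extraction you carry out. You are also right about the typo: expanding $R_1(u,w) = u^a(3a-1)(u+w)^{b-2} + u^{a+1}(b-2a)(u+w)^{b-3}$ gives $A_{i,1} = (3a-1)\binom{b-2}{i-a} + (b-2a)\binom{b-3}{i-a-1}$, and the case $a=2$, $b=3$ confirms this, since $A_{3,1}=4$ requires the second coefficient to be $b-2a=-1$ rather than $b-2=1$; so the $(b-2)$ in the stated theorem should read $(b-2a)$.
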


In particular, the boundary coefficients are binomials and
we have an explicit expression for some of the interior coefficients
as sums of binomial coefficients. 

\begin{figure}[H]
\begin{center}
    \begin{tikzpicture}[scale=0.4]
            \draw[blue!50] (10, 0) -- (0, 10) -- (0, 7) -- (4, 0) -- (10, 0);
            \filldraw[red!50] (10.1, 0) -- (0, 10.1) -- (0, 9.9) -- (9.9, 0) -- (10.1, 0);
            \filldraw[red!50] (9.6, 0) -- (0, 9.6) -- (0, 9.4) -- (9.4, 0) -- (9.6, 0);
            \filldraw[red!50] (9.1, 0) -- (0, 9.1) -- (0, 8.9) -- (8.9, 0) -- (9.1, 0);
            \filldraw[red!50] (4, -0.1) -- (4, 0.1) -- (10, 0.1) -- (10, -0.1) -- (4, -0.1);
            \filldraw[red!50] (4, 0.4) -- (4, 0.6) -- (9.5, 0.6) -- (9.5, 0.4) -- (4, 0.4);
            \filldraw[red!50] (-0.1, 7) -- (0.1, 7) -- (0.1, 10) -- (-0.1, 10) -- (-0.1, 7);

            \draw[thick, ->] (0, 0) -- (0, 10.5);
            \draw[thick, ->] (0, 0) -- (10.5, 0);
    \end{tikzpicture}
    \end{center}
    \caption{Lines on the Newton polygon where the coefficients are explicitly known.} \label{fig:Coeff}
\end{figure}
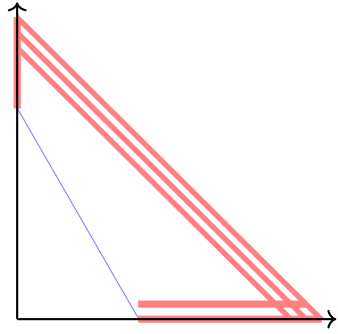

For the rationals of the form $\rho=\frac{1}{n}$ and $\rho=\frac{2}{2n-1}$ we can add also the coefficients on one more vertical line with $i=1.$

\begin{theorem}
\label{Fibonacci Critical}
 For the Markov polynomials  $M_{1/n}$ and $M_{2/(2n-1)}$ we have respectively
$$
        S_1(y^2, z^2) = \sum_{k = 1}^n k y^{2(k-1)}z^{2(n-k)}
       $$  
 and   
$$
        S_1(y^2, z^2) 
        = 2n y^{4n-2} + \sum_{k=1}^{n-1} 4k y^{2(n+k-1)} z^{2(n-k)}.
$$
\end{theorem}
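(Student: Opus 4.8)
The plan is to prove both formulae by induction along the two relevant rays of the Conway topograph, exactly as in the proof of the $T_0$ formula. The key observation is that the Markov polynomials $M_{1/n}$ sit along one edge of the topograph (the ``Fibonacci branch''), with the recursion that produces $M_{1/(n+1)}$ from $M_{1/n}$, $M_{1/(n-1)}$ and the fixed polynomial $M_{1/1}=\frac{x^2+y^2}z$; similarly the $M_{2/(2n-1)}$ sit along another edge with their own three-term recursion. First I would record, from \eqref{eqn:Rational Polynomials} and the Laurent expansion in $x$, that for $\rho=1/n$ we have $P_{1/n}(u,v,w)=S_{0}(v,w)+S_1(v,w)u+\Ord(u^2)$ where by Theorem~\ref{Coefficients} we already know $S_0(v,w)=v(v+w)^{a-1}$, i.e. $S_0=v\,(v+w)^{n-1}$ when $a=1$ (note $b=n$). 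So only $S_1$ is at issue, and the claim $S_1(y^2,z^2)=\sum_{k=1}^{n}k\,y^{2(k-1)}z^{2(n-k)}$ is equivalent, after substituting $v=y^2$, $w=z^2$, to $S_1(v,w)=\sum_{k=1}^{n}k\,v^{k-1}w^{n-k}$, which is a polynomial I would like to recognize in closed form, namely the coefficient of $u^1$.

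The main step is then the inductive computation. I would specialise the numerator evolution \eqref{eq:P_evolution} to the Fibonacci ray. Concretely, on that ray the triple of Figure~\ref{fig:Assume} has $\rho_Z=\frac{1}{n-1}$, $\rho_X=\frac{1}{1}$ (the fixed vertex), $\rho_Y=\frac{1}{n}$, and $\rho_{Z'}=\frac{1}{n+1}$ (one checks the mediant arithmetic: here the ``$c/d$'' of \eqref{eq:P_evolution} is $1/1$). Thus
\[
  P_{1/(n+1)}(u,v,w)=(u+v+w)\,P_{1/1}(u,v,w)\,P_{1/n}(u,v,w)-u\,v\,w\,P_{1/(n-1)}(u,v,w),
\]
with $P_{1/1}(u,v,w)=u+v$ (from \eqref{eq:base_cases}: $M_{1/1}=\frac{x^2+y^2}z$ has numerator $u+v$). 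Writing $P_{1/m}=S_{0}^{(m)}(v,w)+S_{1}^{(m)}(v,w)\,u+\Ord(u^2)$, I would substitute into the displayed identity and extract the coefficients of $u^0$ and $u^1$. The $u^0$ coefficient just reconfirms $S_0^{(m)}=v(v+w)^{m-1}$; the $u^1$ coefficient gives a recursion of the shape $S_1^{(n+1)}=v\cdot(\text{stuff})+w\,S_1^{(n)}+S_0^{(n)}(v,w)+\dots - w\,S_1^{(n-1)}$ — I expect the homogeneous piece to collapse to $S_1^{(n+1)}(v,w)=w\,S_1^{(n)}(v,w)+v^{n}$ or similar, which is exactly the recursion satisfied by $\sum_{k=1}^{n+1}k v^{k-1}w^{n+1-k}$ (since $\sum_{k=1}^{n+1}k v^{k-1}w^{n+1-k}=w\sum_{k=1}^{n}k v^{k-1}w^{n-k}+\sum_{k=1}^{n+1}v^{k-1}w^{n+1-k}$ and the last geometric-type sum is $(v+w)^{?}$... here one must be careful, it is $\sum_{k=1}^{n+1}v^{k-1}w^{n+1-k}=\frac{v^{n+1}-w^{n+1}}{v-w}$, not a binomial). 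Matching this against the recursion coming from \eqref{eq:P_evolution} and checking the base cases $M_{1/1}$, $M_{1/2}$ (from \eqref{eq:base_cases}, $P_{1/2}=(u+v)^2+u w = u^2+2uv+v^2+uw$, so $S_1^{(2)}(v,w)=2v+w$, matching $\sum_{k=1}^2 k v^{k-1}w^{2-k}=w+2v$) completes the first half.

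The second formula is handled the same way along the $\rho=2/(2n-1)$ ray: here the fixed vertex along that branch is $\rho=1/1$ again (the mediant relations give $\frac{2}{2n-1}\oplus\frac{2}{2n+1}\ni$-type arithmetic, with the recurring neighbour having $\rho=1/1$), so one gets a three-term recursion $P_{2/(2n+1)}=(u+v+w)P_{1/1}P_{2/(2n-1)}-u v w\,P_{2/(2n-3)}$, extracts the $u^0$ and $u^1$ coefficients, and matches against the proposed closed form with its special leading term $2n\,v^{2n-1}$ and tail $\sum_{k=1}^{n-1}4k\,v^{n+k-1}w^{n-k}$. The base cases are $M_{2/1}$ and $M_{2/3}$, the latter being the explicit $P_{2/3}$ displayed just before Theorem~\ref{Newton Poly}, from which one reads off $S_1^{(n=2)}$ directly and checks agreement. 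I expect the main obstacle to be purely bookkeeping: correctly identifying, on each ray, which of the three neighbours in Figure~\ref{fig:Assume} plays the role of the ``$c/d$'' vertex in \eqref{eq:P_evolution} and which the ``$a/b$'' vertex (the mediant indexing is easy to get backwards), and then carefully keeping track of the $\Ord(u^2)$ remainder so that the $u^0$ and $u^1$ coefficients are extracted cleanly — in particular making sure the term $u v w\,P_{2/(2n-3)}$ only affects the coefficient of $u^1$ and higher, never $u^0$, which is what forces $S_0$ to be a pure power of $v$ times $(v+w)^{a-1}$. Once the recursions are set up correctly, verifying that the proposed closed forms satisfy them is a routine polynomial identity, and the base cases are immediate from \eqref{eq:base_cases} and the displayed $P_{2/3}$.
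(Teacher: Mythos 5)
The overall strategy you propose --- expand $P_\rho(u,v,w)$ in powers of $u$, apply the evolution relation \eqref{eq:P_evolution}, extract the $u^1$ coefficient and check base cases --- is the right one and is what the paper intends. But both of the recursions you wrote down are incorrect, and the error is not merely bookkeeping.

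\textbf{The Fibonacci ray.} The fixed neighbour along the $M_{1/n}$ ray is $M_{0/1}=x$, not $M_{1/1}$; this is precisely what \eqref{Fibonacci Vieta} records (the $x^2$ in $\frac{M_{1/k}^2+x^2}{M_{1/(k-1)}}$ is $M_{0/1}^2$). In the notation of \eqref{eq:P_evolution} this means $c/d=0/1$, $a/b=1/(n-1)$ and $(a+c)/(b+d)=1/n$, so the factor is $P_{0/1}\equiv 1$ and the prefix on the subtracted term is $u^0v^1w^{1}=vw$. The correct recursion is therefore
\[
P_{1/(n+1)} \;=\; (u+v+w)\,P_{1/n} \;-\; vw\,P_{1/(n-1)},
\]
not $(u+v+w)(u+v)P_{1/n}-uvw\,P_{1/(n-1)}$. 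Your version is ruled out on degree grounds alone: $P_{1/(n+1)}$ is homogeneous of degree $(a+b-1)=n+1$, while your right-hand side has degree $n+2$. With the correct recursion the $u^1$ coefficient gives $S_1^{(n+1)}(v,w)=v^n+(v+w)S_1^{(n)}-vw\,S_1^{(n-1)}$, which the closed form $\sum_{k=1}^m k\,v^{k-1}w^{m-k}$ does satisfy, so the first formula goes through once the recursion is fixed.

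\textbf{The $2/(2n-1)$ ray.} Here the situation is worse: the rationals $2/(2n-1)$ do \emph{not} share a fixed Farey neighbour ($1/1$ is adjacent to $2/1$ and $2/3$ but not to $2/5$; in fact $2/(2n-1)$ is adjacent to $1/(n-1)$ and $1/n$, which move with $n$), so there is no three-term Vieta recursion of the shape you propose along that ray. Your candidate $P_{2/(2n+1)}=(u+v+w)P_{1/1}P_{2/(2n-1)}-uvw\,P_{2/(2n-3)}$ also fails homogeneity: the left side has degree $2n+2$ but $uvw\,P_{2/(2n-3)}$ has degree $2n+1$. The step that actually works is a \emph{single} application of \eqref{eq:P_evolution}, taking $a/b=0/1$, $c/d=1/(n-1)$, $(a+c)/(b+d)=1/n$, which yields
\[
P_{2/(2n-1)} \;=\; (u+v+w)\,P_{1/(n-1)}\,P_{1/n} \;-\; u\,v^{n-1}w^{\,n},
\]
since $P_{0/1}\equiv 1$. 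Extracting the $u^1$ coefficient, using the already-established $u^0$ coefficient $v^m$ and $u^1$ coefficient $\sum_k k\,v^{k-1}w^{m-k}$ of $P_{1/m}$, reproduces $2n\,v^{2n-1}+\sum_{k=1}^{n-1}4k\,v^{n+k-1}w^{n-k}$. So the second formula is not a separate induction at all: it follows from the first formula by one evolution step, and no recursion among the $P_{2/(2n-1)}$ themselves is needed or available.
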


The proof of these results is similar to that of Theorems 3.1 and 4.1 and is based on relation (\ref{eq:P_evolution}).

In particular, on the line $i=1$ the Markov polynomial  $M_{1/n}(x, y, z)$ has the coefficients $1,2,\dots, n$ (see Figure \ref{fig:NP89} for the example with $n = 5$, corresponding to the Markov number $89$).

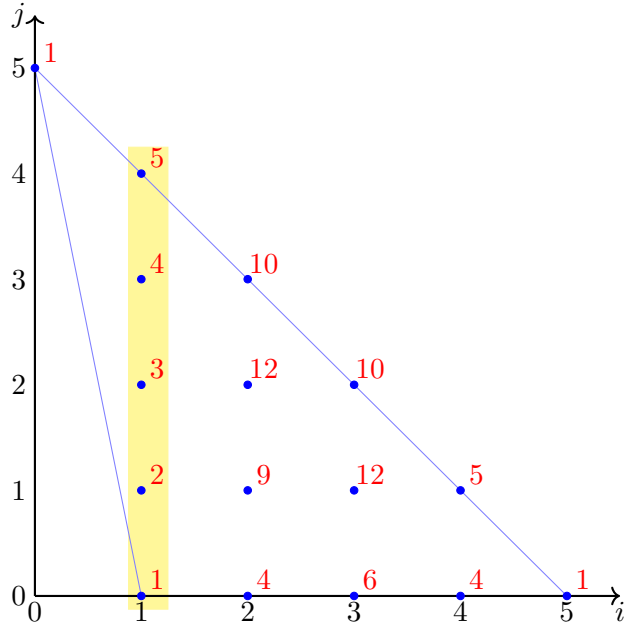
\begin{figure}[H]
    \begin{center}
    \begin{tikzpicture}[scale=0.7]
    \filldraw[yellow!50] (1.75, -0.25) -- (2.5, -0.25) -- (2.5, 8.5) -- (1.75, 8.5);
            
            \draw[blue!50] (10, 0) -- (0, 10) -- (2, 0) -- (10, 0);

            \draw[thick, ->] (0, 0) -- (0, 11);
            \draw[thick, ->] (0, 0) -- (11, 0);

            \filldraw [blue] (10, 0) circle (2pt);
            \node[red] at (10.3, 0.3) {1};
            \filldraw [blue] (8, 2) circle (2pt);
            \node[red] at (8.3, 2.3) {5};
            \filldraw [blue] (6, 4) circle (2pt);
            \node[red] at (6.3, 4.3) {10};
            \filldraw [blue] (4, 6) circle (2pt);
            \node[red] at (4.3, 6.3) {10};
            \filldraw [blue] (2, 8) circle (2pt);
            \node[red] at (2.3, 8.3) {5};
            \filldraw [blue] (0, 10) circle (2pt);
            \node[red] at (0.3, 10.3) {1};
            
            \filldraw [blue] (8, 0) circle (2pt);
            \node[red] at (8.3, 0.3) {4};
            \filldraw [blue] (6, 2) circle (2pt);
            \node[red] at (6.3, 2.3) {12};
            \filldraw [blue] (4, 4) circle (2pt);
            \node[red] at (4.3, 4.3) {12};
            \filldraw [blue] (2, 6) circle (2pt);
            \node[red] at (2.3, 6.3) {4};

            \filldraw [blue] (6, 0) circle (2pt);
            \node[red] at (6.3, 0.3) {6};
            \filldraw [blue] (4, 2) circle (2pt);
            \node[red] at (4.3, 2.3) {9};
            \filldraw [blue] (2, 4) circle (2pt);
            \node[red] at (2.3, 4.3) {3};
            
            \filldraw [blue] (4, 0) circle (2pt);
            \node[red] at (4.3, 0.3) {4};
            \filldraw [blue] (2, 2) circle (2pt);
            \node[red] at (2.3, 2.3) {2};

            \filldraw [blue] (2, 0) circle (2pt);
            \node[red] at (2.3, 0.3) {1};

            \node at (10, -0.3) {5};    
            \node at (8, -0.3) {4};
            \node at (6, -0.3) {3};
            \node at (4, -0.3) {2};
            \node at (2, -0.3) {1};
            \node at (0, -0.3) {0};

            \node at (-0.3, 10) {5};
            \node at (-0.3, 8) {4};
            \node at (-0.3, 6) {3};
            \node at (-0.3, 4) {2};
            \node at (-0.3, 2) {1};
            \node at (-0.3, 0) {0};

            \node at (-0.3, 11) {$j$};
            \node at (11, -0.3) {$i$};
    \end{tikzpicture}
    \end{center}
   \caption{Weighted Newton polygon of $M_{1/5}$ with the highlighted coefficients on the line $i=1$.} \label{fig:NP89}
\end{figure}

In the next two sections we consider in more detail two special series of Markov polynomials $M_\rho$ with $\rho=1/n$ and $\rho=n/(n+1)$.

\section{Markov polynomials corresponding to Fibonacci and Pell numbers}
\label{sec:funf}
\subsection{Markov-Fibonacci polynomials}
\label{sec:Fib Poly}
Fibonacci numbers satisfy the following recurrence
\begin{equation}
\label{Fib Rec}
    F_{n+1} = F_{n} + F_{n-1},
\end{equation}
subject to the intial conditions $F_0 = 0, F_1 = 1$ (sequence
A000045 of \cite{OEIS}).  The first few terms are:
 $$0, 1, 1, 2, 3, 5, 8, 13, 21, 34, 55, 89, 144, 233, 377, 610, \dots. $$

It is known that the odd-indexed Fibonacci numbers $F_{2k+1}$ are also Markov numbers \cite{Aig}. 
Indeed, the odd-indexed Fibonacci numbers satisfy 
\begin{equation*}
    F_{2k+1} = 3F_{2k-1} - F_{2k-3},
\end{equation*}
which means that $Y_k= F_{2k-1}$ satisfy the Vieta recursion $Y_{k+1}=3Y_k-Y_{k-1}$ for the Markov triples $(1, Y_{k-1}, Y_k).$

We will call the Markov polynomials $M_{1/n}(x,y,z)$ corresponding to the Fibonacci numbers $F_{2n-1}$ the {\it Markov-Fibonacci polynomials.} They can be viewed as $3$-parameter quantisations of the odd-indexed Fibonacci numbers.

We should note that a closely related notion was introduced by  Caldero and Zelevinsky \cite{CZ} as cluster variables of the $A_1^{(1)}$ cluster algebra \textit{via} the recurrence relation
\begin{equation}
\label{A11 Cluster}
    f_{m+1} = \frac{f_m^2 + 1}{f_{m-1}}.
\end{equation}
The {\it Caldero-Zelevinsky Fibonacci polynomial } $f_m$ is defined as the corresponding $f_m(x_1,x_2)$ considered as the function of the two initial cluster variables $f_1=x_1, f_2=x_2.$ 


\begin{theorem}[\cite{CZ}, \cite{Zel}]
\label{Thm CZ Fib}
    For every $n \geq 0$ there are the explicit formul\ae
    \begin{align*}
     f_{2n}  &= \frac{1}{x_1^n x_2^n} \sum_{q+r \leq n} \binom{n-r}{q} \binom{n-q}{r} x_1^{2q} x_2^{2r}, \\
   f_{2n+1} &= \frac{1}{x_1^{n+1} x_2^n} \left(x_2^{2(n+1)} + \sum_{q+r\leq n} \binom{n-r}{q} \binom{n+1-q}{r} x_1^{2q} x_2^{2r}\right).
    \end{align*}
    In particular $f_m$ are Laurent polynomials in $x_1, x_2$ with positive coefficients.
\end{theorem}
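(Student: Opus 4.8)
The plan is to prove the two formul\ae{} simultaneously by induction on $n$, treating them as statements about the even-indexed and odd-indexed cluster variables and using the exchange relation \eqref{A11 Cluster}. (Alternatively one may simply invoke the arguments of \cite{CZ,Zel}; what follows is a self-contained route.) First I would check the base cases $n=0,1$ by direct substitution into \eqref{A11 Cluster} starting from $f_1=x_1$, $f_2=x_2$, and confirm that the claimed closed forms reproduce them.

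For the inductive step I would replace the quadratic recurrence \eqref{A11 Cluster} by an equivalent \emph{linear} three-term recurrence. From $f_{m+1}f_{m-1}=f_m^2+1$ one obtains $f_{m+1}f_{m-1}-f_m^2=f_mf_{m-2}-f_{m-1}^2$, so that
\[
  \frac{f_{m+1}+f_{m-1}}{f_m}=\frac{f_m+f_{m-2}}{f_{m-1}}=:\lambda
\]
is independent of $m$; evaluating at the seed gives $\lambda=(x_1^2+x_2^2+1)/(x_1x_2)$, whence $f_{m+1}=\lambda f_m-f_{m-1}$. Substituting the conjectured expressions (written over their common monomial denominators) into this linear relation turns the inductive step into a polynomial identity in $x_1,x_2$; comparing the coefficient of a general monomial $x_1^{2q}x_2^{2r}$ on both sides then reduces it to a finite list of identities among products of binomial coefficients, each of which I expect to follow from the Vandermonde convolution $\sum_{k}\binom{r}{k}\binom{s}{t-k}=\binom{r+s}{t}$ together with Pascal's rule. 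Once the closed forms are in hand, the positivity assertion is immediate, since every coefficient is visibly a sum of products of binomial coefficients.

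The hard part will be the bookkeeping rather than any single identity. Because \eqref{A11 Cluster} interlaces the even and odd families, the induction is genuinely a double induction, and I would have to track carefully how the summation ranges shift when passing from $f_{2n-1},f_{2n}$ to $f_{2n+1}$ and from $f_{2n},f_{2n+1}$ to $f_{2n+2}$; in particular the ``corner'' terms --- such as the isolated monomial $x_2^{2(n+1)}$ in the formula for $f_{2n+1}$ --- behave exceptionally and must be handled on their own. I would also have to be careful that clearing the Laurent denominators, which introduces extra monomial factors, is done consistently before the binomial identities are extracted. A cleaner but less elementary alternative, which I would keep in reserve, is to use the description of $f_m$ as the perfect-matching generating polynomial of a snake graph (in the spirit of \cite{Pro}): then $\binom{n-r}{q}\binom{n-q}{r}$ acquires a direct meaning as a count of matchings with prescribed turning data, and the formula drops out of a lattice-path enumeration.
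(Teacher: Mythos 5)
The paper does not supply its own proof of this statement: it is quoted verbatim from Caldero--Zelevinsky \cite{CZ} and Zelevinsky \cite{Zel}, so there is no in-text argument to compare against. Your proposal is therefore a genuine independent contribution, and the central idea is correct and the standard one for this family: from $f_{m+1}f_{m-1}-f_m^2 = 1 = f_mf_{m-2}-f_{m-1}^2$ one deduces that $\lambda=(f_{m+1}+f_{m-1})/f_m$ is $m$-independent, equal to $(x_1^2+x_2^2+1)/(x_1x_2)$, which linearises the quadratic exchange relation and makes the two closed forms verifiable by matching coefficients of $x_1^{2q}x_2^{2r}$ across one linear step, plus two base cases. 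That linearisation is exactly the well-known "constant of motion'' for rank-$2$ cluster algebras and is the quickest route to the explicit formul\ae.

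Two cautions. First, when you carry out the base-case check you will find that, as printed, the closed forms do \emph{not} reproduce $f_1=x_1$, $f_2=x_2$: at $n=0$ the odd formula returns $(x_2^2+1)/x_1$ and at $n=1$ the even formula returns $(x_1^2+x_2^2+1)/(x_1x_2)$. This reflects an index/normalisation shift between the seed convention in~\eqref{A11 Cluster}--\eqref{twofib} and the convention used in \cite{CZ,Zel}; you should pin down the intended offset (e.g.\ relabel $f_m \mapsto f_{m+2}$ or take $f_0=1$) before extracting binomial identities, otherwise the inductive bookkeeping will not close. Second, a minor point of technique: multiplying by $\lambda$ only \emph{shifts} exponents, it does not convolve, so the identities you will actually need come from Pascal's rule applied in the two index slots together with careful boundary tracking of the $q+r\le n$ constraint and the isolated $x_2^{2(n+1)}$ term; Vandermonde convolution is not really required. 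Your backup route through snake-graph matchings is also sound and is essentially the point of view of \cite{Pro}; it gives a positivity proof for free but is heavier to set up than the linearisation.
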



Comparing Eqs.~\eqref{A11 Cluster} with the Vieta recurrence for Markov polynomials
\begin{equation}
\label{Fibonacci Vieta}
    M_{1/(k+1)} = \frac{M_{1/k} + x^2}{M_{1/(k-1)}}
\end{equation}
we see that $M_{1/k}$'s are a homogeneous version of the cluster variables $f_m$:
\begin{equation}
\label{twofib}
    f_m(x_1, x_2) = M_{1/m}(1, x_2, x_1).
\end{equation}

Using Theorem~\ref{Thm CZ Fib}, we have

\begin{theorem}
\label{Fibonacci Poly Coeff}
    The Markov-Fibonacci polynomials  $M_{1/(n+1)}(x,y,z)$ have coefficients
    \begin{equation}
    \label{eqn:Fib Coeff}
        A_{ij} = \binom{n-j}{n+1-i-j} \binom{i+j}{j},
    \end{equation}
    where $A_{ij}$ is the coefficient of the numerator monomial $x^{2i} y^{2j} z^{2(n+1-i-j)}$.
\end{theorem}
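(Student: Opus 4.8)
The plan is to derive the coefficient formula \eqref{eqn:Fib Coeff} directly from the Caldero--Zelevinsky formula of Theorem~\ref{Thm CZ Fib} via the identification \eqref{twofib}, namely $f_m(x_1,x_2) = M_{1/m}(1,x_2,x_1)$. First I would fix notation: write $m = n+1$, so that $M_{1/(n+1)}(x,y,z)$ is the homogenisation of $f_{n+1}(x_1,x_2)$. By Theorem~\ref{denom}, $M_{1/(n+1)}(x,y,z) = P_{1/(n+1)}(x^2,y^2,z^2)/(y^n z^n)$ with $P_{1/(n+1)}$ homogeneous of degree $n+1$ in $u=x^2$, $v=y^2$, $w=z^2$; setting $x=1$ kills the $u$-dependence only in the sense that $u=x^2=1$, so each monomial $x^{2i}y^{2j}z^{2(n+1-i-j)}$ of the numerator contributes to $f_{n+1}(x_1,x_2) = M_{1/(n+1)}(1,x_2,x_1)$ a term $A_{ij}\, x_2^{2j} x_1^{2(n+1-i-j)}$ after dividing by $x_2^n x_1^n$, i.e. $A_{ij}$ is the coefficient of $x_1^{2(n-i-j)+2}\, x_2^{2j-2n} \cdot x_1^n x_2^n$... more carefully, $A_{ij}$ equals the coefficient of $x_1^{\,2(n+1-i-j)}x_2^{\,2j}$ in the numerator-over-$x_1^n x_2^n$ expression, which is exactly a coefficient read off from the CZ formula.

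The second step is to match indices. Because $\rho = 1/(n+1)$ with $n+1$ possibly even or odd, I expect to split into the two cases of Theorem~\ref{Thm CZ Fib}: if $n+1 = 2N$ then compare with the $f_{2N}$ formula, and if $n+1 = 2N+1$ compare with the $f_{2N+1}$ formula. In each case one substitutes the explicit binomial double-sum for $f_m$, divides through by the monomial $x_1^{\deg} x_2^{\deg}$ coming from \eqref{twofib} versus the CZ normalisation $1/(x_1^N x_2^N)$ or $1/(x_1^{N+1}x_2^N)$, and reads off the coefficient of $x_1^{2(n+1-i-j)} x_2^{2j}$. This forces a linear change of the summation indices $q,r$ in terms of $i,j,n$; plugging that substitution into $\binom{n-r}{q}\binom{n-q}{r}$ (resp.\ $\binom{n-r}{q}\binom{n+1-q}{r}$) should, after renaming, collapse to $\binom{n-j}{n+1-i-j}\binom{i+j}{j}$. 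The apparent discrepancy between the two CZ cases (one has an extra $x_2^{2(n+1)}$ boundary term) should be absorbed: that extra monomial corresponds to $i=0$, $j=n+1$, and one checks $\binom{n-(n+1)}{0}$... here one must be slightly careful with the convention $\binom{-1}{0}=1$, or rather the boundary term matches $A_{0,n+1}=1=\binom{a-1}{j-b}$ from Theorem~\ref{Coefficients} with $a=1,b=n+1$, so consistency is automatic.

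A cleaner alternative, which I would present as the actual proof, is to \emph{verify} that the claimed formula \eqref{eqn:Fib Coeff} satisfies the evolution relation. Since $M_{1/k}$ obeys the Vieta recursion \eqref{Fibonacci Vieta}, the numerators obey the specialisation of \eqref{eq:P_evolution}: $P_{1/(k+1)}(u,v,w) = (u+v+w)\,P_{1/k}(u,v,w) - $ (the $P_{0/1}\equiv 1$ contribution with its $u$-monomial prefactor); tracking the denominators from Theorem~\ref{denom} one gets a concrete three-term recurrence for the coefficient arrays $\{A_{ij}^{(n)}\}$. One then checks (i) the base cases $n=0,1$ against \eqref{eq:base_cases}, and (ii) that the right-hand side $\binom{n-j}{n+1-i-j}\binom{i+j}{j}$ satisfies this recurrence, which reduces to a Pascal-type identity among binomial coefficients. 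This avoids re-deriving CZ and keeps the argument self-contained within the framework of the paper.

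The main obstacle I anticipate is purely bookkeeping: correctly aligning the homogenisation/dehomogenisation in \eqref{twofib} (which variable becomes $1$, and how the monomial denominators $x^{a-1}y^{b-1}z^{a+b-1}$ interact with the CZ normalisation $x_1^{-N}x_2^{-N}$), and handling the parity split with the boundary monomials and the edge conventions for binomial coefficients with small or negative top/bottom entries. The binomial identity itself (whichever route is chosen) should be a one-line Vandermonde- or Pascal-style manipulation once the indices are correctly set up; the risk is an off-by-one or a swapped-variable error in the translation, not any genuine combinatorial difficulty.
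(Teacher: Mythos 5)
Your first route (deriving \eqref{eqn:Fib Coeff} from the Caldero--Zelevinsky formula via the identification \eqref{twofib}) is what the paper does: the paper's entire ``proof'' is the sentence ``Using Theorem~\ref{Thm CZ Fib}, we have\ldots'' with no further details supplied. Your second, preferred route --- verifying the claimed coefficient formula directly against the evolution of the numerators --- is a genuinely different, self-contained argument. Specialising \eqref{eq:P_evolution} to the Fibonacci branch (fixing $\rho_X = 0/1$) gives $P_{1/(k+1)} = (u+v+w)\,P_{1/k} - vw\,P_{1/(k-1)}$, hence for the coefficients of $u^iv^jw^{\,m-i-j}$ in $P_{1/m}$ the three-term recurrence
\begin{equation*}
A_{ij}^{(k+1)} = A_{i-1,j}^{(k)} + A_{i,j-1}^{(k)} + A_{i,j}^{(k)} - A_{i,j-1}^{(k-1)}.
\end{equation*}
Substituting $A_{ij}^{(m)} = \binom{m-1-j}{m-i-j}\binom{i+j}{j}$, three applications of Pascal's rule collapse the right-hand side to $\binom{m-j}{m+1-i-j}\binom{i+j}{j}$, and the base cases $P_{1/1}=u+v$, $P_{1/2}=(u+v)^2+uw$ check immediately. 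What the CZ route buys is brevity; what your route buys is independence from the external reference and immunity to the normalisation and index pitfalls you yourself flag.

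Two small corrections that do not undermine your plan. First, in the CZ route there is in fact an off-by-two: writing $g_k := M_{1/k}(1,x_2,x_1)$ one has $g_0 = M_{1/0}(1,x_2,x_1) = x_2 = f_2$ and $g_1 = M_{1/1}(1,x_2,x_1) = (1+x_2^2)/x_1 = f_3$, so the correct dictionary is $f_m(x_1,x_2) = M_{1/(m-2)}(1,x_2,x_1)$, not \eqref{twofib} as printed; matching denominator degrees against Theorem~\ref{Thm CZ Fib} would have forced this shift on you during the computation. Second, in sketching the recurrence route you describe the subtracted term as ``the $P_{0/1}\equiv 1$ contribution with its $u$-monomial prefactor'': in fact $P_{0/1}\equiv 1$ enters only multiplicatively in the $(u+v+w)$-term, the subtraction is $vw\,P_{1/(k-1)}$ (with $v$ and $w$, not $u$), and the recurrence is three-term in $k$ as you correctly say a sentence later. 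Both are exactly the kind of bookkeeping slip you anticipated, and both favour the direct verification.
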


\begin{corollary}
\label{Fibonacci Saturation}
Markov-Fibonacci polynomials satisfy the Saturation Conjecture \ref{Saturation}.
\end{corollary}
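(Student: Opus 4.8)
The plan is to extract the exact support of the numerator $P_{1/(n+1)}$ from the closed formula of Theorem~\ref{Fibonacci Poly Coeff} and check that it coincides with the set of lattice points of $\Delta_{1/(n+1)}$ described in Theorem~\ref{Newton Poly}. Since, by the definition \eqref{Newton}, the support of $P_{1/(n+1)}$ is automatically contained in $\Delta_{1/(n+1)}\cap\mathbb{Z}^2$, establishing Conjecture~\ref{Saturation} for $M_{1/(n+1)}$ reduces to the single statement that the coefficient $A_{ij}$ is strictly positive at every lattice point $(i,j)$ of $\Delta_{1/(n+1)}$.

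First I would specialise Theorem~\ref{Newton Poly} to $a=1$, $b=n+1$: a point $(i,j)$ with $i,j\geq 0$ lies in $\Delta_{1/(n+1)}$ precisely when $(n+1)i+j\geq n+1$ and $i+j\leq n+1$. For integer points the first inequality is automatic once $i\geq 1$, whereas for $i=0$ it forces $j\geq n+1$ and hence $j=n+1$. Thus the lattice points of $\Delta_{1/(n+1)}$ are exactly the points with $i\geq 1$, $j\geq 0$, $i+j\leq n+1$, together with the vertex $(0,n+1)$.

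Next I would evaluate $A_{ij}=\binom{n-j}{n+1-i-j}\binom{i+j}{j}$ on these points. The factor $\binom{i+j}{j}$ is positive for all $i,j\geq 0$. When $i\geq 1$ and $i+j\leq n+1$ one has $0\leq n+1-i-j\leq n-j$ (the upper inequality being exactly $i\geq 1$, and $n-j\geq 0$ following from $j\leq n+1-i\leq n$), so $\binom{n-j}{n+1-i-j}$ is a genuine nonzero entry of Pascal's triangle and $A_{ij}>0$. The only remaining point is the corner $(0,n+1)$: here the boundary formula of Theorem~\ref{Coefficients} gives $A_{0,n+1}=\binom{a-1}{(n+1)-b}=\binom{0}{0}=1$, which accounts for the extra monomial $v^{n+1}$ of $P_{1/(n+1)}$ (the term singled out in the odd-index case of Theorem~\ref{Thm CZ Fib}). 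Hence every lattice point of $\Delta_{1/(n+1)}$ carries a positive coefficient, which is exactly the Saturation Conjecture for $M_{1/(n+1)}$.

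I expect no serious obstacle here: the argument is bookkeeping once Theorems~\ref{Newton Poly}, \ref{Coefficients} and \ref{Fibonacci Poly Coeff} are in hand. The one point that needs care is the convention for $\binom{n-j}{n+1-i-j}$ along the edge $i=0$, where a careless reading would make the vertex $(0,n+1)$ look absent; this is precisely why I would dispose of the column $i=0$ separately using the boundary formula $A_{0,j}=\binom{a-1}{j-b}$ of Theorem~\ref{Coefficients}, which shows directly that the only monomial there is $v^{n+1}$ with coefficient $1$.
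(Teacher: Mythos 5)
Your argument is correct and is exactly the (implicit) reasoning behind the paper's corollary: the paper states no proof, and the intended deduction is precisely that the closed formula of Theorem~\ref{Fibonacci Poly Coeff} is visibly a positive product of binomials at every lattice point of $\Delta_{1/(n+1)}$. Your careful treatment of the isolated vertex $(0,n+1)$ via the boundary formula of Theorem~\ref{Coefficients} is a sensible way to sidestep the $\binom{-1}{0}$ convention ambiguity.
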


Another simple consequence of this result is the first part of Theorem \ref{Fibonacci Critical} since setting $i = 1$ leads to
$$
 A_{1j} = \binom{n-j}{n-j} \binom{1+j}{j}= j+1.
$$

It is interesting to note\footnote{This was observed by Sam Evans after discussions with Valentin Ovsienko.}  that our Markov-Fibonacci polynomials $M_{1/n}(u,v,w)$ are related to Fibonacci polynomials $\mathcal F_n(q)$ from the theory of $q$-rational numbers developed by Morier-Genoud and Ovsienko \cite{MGO}.

The Fibonacci polynomials $\mathcal{F}_n(q)$ are the denominators of the $q$-deformed rationals $\left[\frac{F_{n+1}}{F_{n}}\right]_q$, where $F_n$ are the Fibonacci numbers \cite{LMGOV}. They can be determined by the recurrence relation
	\begin{equation}
	\label{quantFibrec}
	\mathcal{F}_{n+2} (q) = [3]_q \mathcal{F}_n (q) - q^2 \mathcal{F}_{n-2} (q),
	\end{equation} 
	where $[3]_q = 1 + q + q^2$, and the initial conditions 
\[\mathcal{F}_0(q) = 1,\; \mathcal{F}_2(q) = 1+q \qquad 
\text{and} \qquad 
\mathcal{F}_1(q) = 1,\; \mathcal{F}_3(q) = 1+q+q^2.\]

\begin{proposition}
The Fibonacci polynomials $\mathcal{F}_n(q)$ are the following specialisations
	\begin{equation}
	\label{relationfib}
	\mathcal{F}_{2n} (q) = P_{1/n}(q,1,q^2)
	\end{equation} 
	of the numerators $P_{1/n}(u,v,w)$ of the Markov polynomials $M_{1/n}(u,v,w)$.
\end{proposition}

\begin{proof}
This follows from the comparison of the recurrence relations (\ref{eq:P_evolution}) and (\ref{quantFibrec}), and $P_{1/0}(u,v,w)=1$, $P_{1/1}(u,v,w)=u+v$.
\end{proof}



\subsection{Markov-Pell Polynomials}

We now turn to the Markov polynomials corresponding to Pell numbers,
satisfying the recurrence
\begin{equation}
\label{Pell Rec}
    P_{n+1} = 2P_{n} + P_{n-1},
\end{equation}
subject to the intial conditions $P_0 = 0, P_1 = 1$ (sequence
A000129 of \cite{OEIS}). 
The first few Pell numbers are $0, 1, 2, 5, 12, 29, 70, 169, \dots$.

It is known \cite{Aig} that the odd-indexed Pell numbers $P_{2k+1}$ are also Markov numbers, since they satisfy the recurrence
$$
P_{2k+1}=6P_{2k-1}-P_{2k-3},
$$
equivalent to Vieta recursion for Markov triples $(2, Y_{k-1}, Y_k), \, Y_k=P_{2k-1}.$

The Markov polynomials $M_{k/(k+1)}$ can be viewed as a quantization of the Pell numbers $P_{2k+1}$.

We introduce now the {\it  Markov-Pell  polynomials} \textit{via} the following recurrences, (which are quantized versions of (\ref{Pell Rec}))
\begin{equation}
\label{Pell Rec OddEven}
  \begin{aligned}
    R_{2k+1} &= (x^2 + y^2) R_{2k} + x^2 z^2 R_{2k-1}, \\
    R_{2k} &= (x^2 + y^2) R_{2k-1} + y^2 z^2 R_{2k-2},
  \end{aligned}
\end{equation}
with $R_0=0, \, R_1 = 1, \, R_2=x^2+y^2, R_3 = x^4 + 2x^2y^2 + y^4 + x^2z^2$.


The odd-indexed Markov-Pell  polynomials satisfy the recurrence
\begin{equation}
\label{Mar Pell 2}
    R_{2k+1} = (x^2 + y^2)(x^2 + y^2 + z^2) R_{2k-1} - x^2 y^2 z^4 R_{2k-3},
\end{equation}
which is precisely the recurrence for the numerators of the Markov polynomials $M_{k/(k+1)}$ (and because of the special  initial data they coincide with these numerators).

From the equations \eqref{Pell Rec OddEven} we can deduce
\begin{corollary}
\label{Pell Saturation}
 Markov-Pell polynomials satisfy the Saturation Conjecture \ref{Saturation}.
\end{corollary}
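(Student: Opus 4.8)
The plan is to prove the saturation statement by tracking the \emph{supports} of the auxiliary polynomials $R_n$ from \eqref{Pell Rec OddEven} rather than their individual coefficients. Writing a monomial of the homogeneous polynomial $R_n$ as $x^{2i}y^{2j}z^{2l}$ with $i+j+l=n-1$ and projecting it to the lattice point $(i,j)\in\mathbb Z^2_{\ge0}$, let $\operatorname{supp}(R_n)$ denote the set of such $(i,j)$ with non-zero coefficient. The key observation is that every monomial appearing in the right-hand sides of \eqref{Pell Rec OddEven} has a positive coefficient, and the initial data $R_0,R_1,R_2,R_3$ have non-negative coefficients; hence all $R_n$ have non-negative coefficients and \emph{there is never any cancellation}. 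Consequently the support of a sum is the union of the supports, the support of a product is the Minkowski sum, and, writing $e_1=(1,0)$, $e_2=(0,1)$ — so that multiplication by $x^2$, $y^2$, $x^2z^2$, $y^2z^2$ induces on projected supports the shifts $+e_1$, $+e_2$, $+e_1$, $+e_2$ respectively (the extra $z^2$ only adjusting the homogeneity degree) — the recurrences \eqref{Pell Rec OddEven} translate into the support recurrences
\begin{align*}
\operatorname{supp}(R_{2k+1})&=\bigl(\operatorname{supp}(R_{2k})+e_1\bigr)\cup\bigl(\operatorname{supp}(R_{2k})+e_2\bigr)\cup\bigl(\operatorname{supp}(R_{2k-1})+e_1\bigr),\\
\operatorname{supp}(R_{2k})&=\bigl(\operatorname{supp}(R_{2k-1})+e_1\bigr)\cup\bigl(\operatorname{supp}(R_{2k-1})+e_2\bigr)\cup\bigl(\operatorname{supp}(R_{2k-2})+e_2\bigr).
\end{align*}
Note that the three-term recurrence \eqref{Mar Pell 2} is unsuitable for this purpose, since its minus sign could in principle produce cancellations.

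Next I would pin down the answer and set up a coupled induction. By Theorem~\ref{Newton Poly} the target polygon for the odd-indexed polynomials is
$$
E_k:=\Delta_{k/(k+1)}\cap\mathbb Z^2=\{(i,j)\in\mathbb Z^2_{\ge0}:(k+1)i+kj\ge k(k+1),\ i+j\le 2k\},
$$
and direct computation of $R_2,R_4,R_6$ suggests that the even-indexed polynomials fill the annular region $D_k:=\{(i,j)\in\mathbb Z^2_{\ge0}:k\le i+j\le 2k-1\}$; with the conventions $D_0=\varnothing$ and $E_0=\{(0,0)\}$ one has $R_0,R_1,R_2,R_3$ corresponding to $D_0,E_0,D_1,E_1$. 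Since $R_{2k+1}$ is, by its construction, the numerator $P_{k/(k+1)}$ of $M_{k/(k+1)}$, the Saturation Conjecture~\ref{Saturation} for $\rho=k/(k+1)$ is exactly the equality $\operatorname{supp}(R_{2k+1})=E_k$, so it suffices to prove, by simultaneous induction on $k$, the pair of statements $\operatorname{supp}(R_{2k})=D_k$ and $\operatorname{supp}(R_{2k+1})=E_k$. With the base cases $R_0,R_1,R_2,R_3$ checked directly, the inductive hypothesis $\operatorname{supp}(R_{2k-2})=D_{k-1}$, $\operatorname{supp}(R_{2k-1})=E_{k-1}$ combined with the two support recurrences above reduces the inductive step to the two purely combinatorial lattice-polygon identities
\begin{align*}
\bigl(E_{k-1}+e_1\bigr)\cup\bigl(E_{k-1}+e_2\bigr)\cup\bigl(D_{k-1}+e_2\bigr)&=D_k,\\
\bigl(D_k+e_1\bigr)\cup\bigl(D_k+e_2\bigr)\cup\bigl(E_{k-1}+e_1\bigr)&=E_k.
\end{align*}

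The final step is to verify these two identities by elementary casework on the position of a lattice point relative to the boundary lines $i=0$, $j=0$, $i+j=2k-1$ and $(k+1)i+kj=k(k+1)$. The inclusions ``$\subseteq$'' are routine: a shift by $e_1$ or $e_2$ of a point of $D_{k-1}$, $E_{k-1}$ or $D_k$ satisfies the defining inequalities of the right-hand side, as one checks by substitution, using the defining inequalities together with the elementary fact that every point of $E_m$ satisfies $i+j\ge m$. For ``$\supseteq$'', the interior and the outer diagonal $i+j=2k-1$ are immediate, and the whole difficulty — indeed essentially all of the work — is concentrated at the vertices of $D_k$ and $E_k$ lying on the coordinate axes. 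The crucial structural point, and the step I expect to be the main obstacle, is that the vertex $(k,0)$ of $E_k$ is \emph{not} of the form $D_k+e_1$ or $D_k+e_2$ (its would-be predecessors $(k-1,0)$ and $(k,-1)$ both fail to lie in $D_k$), so it can only be produced by the term $\operatorname{supp}(R_{2k-1})+e_1$ — that is, by the monomial $x^2z^2R_{2k-1}$ in \eqref{Pell Rec OddEven} — and similarly the $i$-axis edge of $D_k$ is supplied by $E_{k-1}+e_1$ and the $j$-axis vertex of $D_k$ by $E_{k-1}+e_2$. Keeping track of which term of the recurrence is responsible for each boundary point (with a handful of small-$k$ instances behaving slightly differently and being absorbed into the base cases) is exactly the content that makes the argument work. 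Once both identities are established, the induction yields $\operatorname{supp}(R_{2k+1})=E_k=\Delta_{k/(k+1)}\cap\mathbb Z^2$, which is the Saturation Conjecture for every Markov--Pell polynomial.
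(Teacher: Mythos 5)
Your proposal is the right argument, and it is essentially the argument the paper is gesturing at: the paper's ``proof'' of Corollary~\ref{Pell Saturation} consists of the single sentence ``From the equations~\eqref{Pell Rec OddEven} we can deduce,'' relying precisely on the fact that those two coupled recurrences have all-positive coefficients, so no cancellation can occur, whereas the three-term recurrence~\eqref{Mar Pell 2} has a minus sign and would not do. You make this explicit: supports of sums become unions, supports of products become Minkowski sums, the two-step recurrence translates into the two support identities, and a coupled induction with the auxiliary annular regions $D_k$ for the even-indexed $R_{2k}$ closes the argument. This is a correct and much more complete rendering of what the paper asserts without details.

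One thing worth noting: what you call ``routine casework'' for the two lattice identities is exactly the substance of the proof and should not be waved away. In particular the $\supseteq$ direction is not immediate away from the vertices either. For instance, for the first identity, a point $(1,k-1)\in D_k$ with $k\ge 3$ is \emph{not} obtained as a shift of a point of $E_{k-1}$ (one checks $(0,k-1)\notin E_{k-1}$ and $(1,k-2)\notin E_{k-1}$ for $k\ge 3$), but is produced by $D_{k-1}+e_2$. So the full inner boundary $i+j=k$, not just the two axis vertices, relies on the $D_{k-1}+e_2$ term, and a clean write-up should verify the inequality $ki+(k-1)j\ge k(k-1)$ after each shift case by case. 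With that caveat, your coupled induction on $(\operatorname{supp} R_{2k},\operatorname{supp} R_{2k+1})$ is exactly the content needed to make the paper's one-line deduction into a proof.
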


The equations ~\eqref{Mar Pell 2} imply the recurrence for the corresponding coefficient $A_{i,j}^{(2k+1)}$ at the monomial $x^{2i} y^{2j} z^{2(k-i-j)}$ in $R_{2k+1}:$
\begin{equation}
\label{Pell Coeff Rec}
    A_{i, j}^{(2k+1)} = A_{i-2, j}^{(2k-1)} + 2 A_{i-1, j-1}^{(2k-1)} + A_{i, j-2}^{(2k-1)}+ A_{i-1, j}^{(2k-1)} + A_{i, j-1}^{(2k-1)} - A_{i-1, j-1}^{(2k-3)}.
\end{equation}


We can also produce the Binet-type formula for the corresponding Markov-Pell polynomials, similar to the classical Binet formula for the Fibonacci numbers
\begin{equation*}
    F_n = \frac{1}{\sqrt{5}}\left(\left(\frac{1+\sqrt{5}}{2}\right)^n - \left(\frac{1-\sqrt{5}}{2}\right)^n\right).
\end{equation*}

%

Indeed, we can rewrite Eqs.~\eqref{Pell Rec OddEven} in  matrix form as
\begin{equation*}
    \begin{pmatrix}
        R_{2k+1} \\
        R_{2k}
    \end{pmatrix}
    =
    \begin{pmatrix}
        (x^2 + y^2)^2 + x^2 z^2 & (x^2 + y^2) x^2 z^2 \\
        (x^2 + y^2) &  y^2 z^2
    \end{pmatrix}
    \begin{pmatrix}
        R_{2k-1} \\
        R_{2k-2}
    \end{pmatrix}
\end{equation*}
The characteristic equation of the corresponding matrix is
\begin{equation}
\label{Pell characteristic}
    \lambda^2 - (x^2 + y^2)(x^2 + y^2 + z^2) \lambda + x^2 y^2 z^4 = 0
\end{equation}
giving the eigenvalues
\begin{equation*}
    \lambda_{\pm}(x,y,z) = \frac{1}{2} \left[(x^2 + y^2)(x^2 + y^2 + z^2) \pm \sqrt{\mathcal{D}}\right]
\end{equation*}
where $${\mathcal{D}} = (x^2 + y^2)^4 + 2z^2(x^2 + y^2)^3 + z^4(x^2 - y^2)^2$$ is the discriminant of \eqref{Pell characteristic}.

The standard procedure leads now to the following Binet-type formula for the Markov-Pell polynomials:
\begin{equation}
\label{Binet}
R_{2k+1} = \frac{1}{\sqrt{\mathcal{D}}} \left( (\lambda_+ - y^2z^2) \lambda_+^k
- (\lambda_--y^2 z^2) \lambda_-^k \right).
\end{equation}

\section{Log-Concavity}
A large number of naturally-occurring combinatorial sequences exhibit the
\emph{logarithmically concave} (log-concave) property \cite{Sta}.
Further to saturation, experimental data that we have looked-at so far
has shown that coefficients of the Markov polynomials are log-concave
in all directions.
\begin{definition}
    A sequence $x = (x_0, x_1, \dots, x_n)$ is said to be \textit{log-concave} if it satisfies the property
    \begin{equation}
    \label{logcon}
        x_k^2 \geq x_{k-1} x_{k+1},
    \end{equation}
    for $k \in \lbrace 1, 2, \dots, n-1 \rbrace$. 
\end{definition}

A well-known example is the binomial sequence $x_k=\binom{n}{k}, \, k=0,\dots,n$:
$$
\frac{x_k^2}{x_{k-1}x_{k+1}}=\binom{n}{k}^2/\binom{n}{k-1}\binom{n}{k+1}=\frac{k+1}{k}\frac{n-k+1}{n-k}>1.
$$

\begin{definition}
    We say that the weights on the Newton polygon satisfy the \textit{weak log-concavity} property if their sequences in all principal directions (horizontal, vertical and diagonal with $i + j = \text{constant}$) are log-concave.
\end{definition}

\begin{conjecture} (Log-concavity conjecture)
\label{Markovlogconcave}
 The coefficients of Markov polynomials are weakly log-concave on the corresponding Newton polygon.
\end{conjecture}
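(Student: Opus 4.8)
The plan is to attack the Log-concavity Conjecture \ref{Markovlogconcave} along the same lines that already worked for saturation, namely by exploiting the explicit coefficient formulae in the two completely understood families and then trying to push an inductive argument along the Conway topograph in general. First I would settle the two special series. For the Markov-Fibonacci polynomials Theorem~\ref{Fibonacci Poly Coeff} gives $A_{ij} = \binom{n-j}{n+1-i-j}\binom{i+j}{j}$, so each of the three principal sections is a product of two sequences of binomial coefficients with linearly varying parameters; since a product of log-concave sequences is log-concave, and each binomial factor is log-concave by the computation displayed just before Definition on $\binom nk$, the horizontal ($j$ fixed), vertical ($i$ fixed) and diagonal ($i+j$ fixed) sections are all log-concave after checking that the shift of parameters along each line still leaves both factors genuinely log-concave (this is the routine ratio computation $A_{ij}^2/(A_{i-1,j}A_{i+1,j})\ge 1$, etc.). For the Markov-Pell polynomials one uses the recurrence \eqref{Pell Coeff Rec} together with the Binet-type formula \eqref{Binet}; here I would argue that each section is, up to the substitution coming from \eqref{Pell characteristic}, a sequence of the form $c_k = \alpha \lambda_1^{k} + \beta \lambda_2^{k}$ evaluated along the line, and use the classical fact that such two-term sequences are log-concave whenever the discriminant $\mathcal D$ is positive and the coefficients have appropriate signs, which one reads off from the initial data $R_0,\dots,R_3$.

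For the general case the strategy is induction on the Conway topograph using the numerator evolution \eqref{eq:P_evolution},
\[
  P_{\frac{a+2c}{b+2d}} = (u+v+w)\,P_{\frac{c}{d}}\,P_{\frac{a+c}{b+d}} - u^{c}v^{d}w^{c+d}\,P_{\frac{a}{b}}.
\]
The idea is to show that weak log-concavity of the three parents $M_{\rho_X},M_{\rho_Y},M_{\rho_Z}$ forces it for the child $M_{\rho_{Z'}}$. Multiplication by $(u+v+w)$ is a ``smoothing'' operation: convolving a log-concave sequence with the log-concave sequence $(1,1,1)$ preserves log-concavity, and multiplying two log-concave polynomials (in the sense of all principal sections) should again be controlled, so the positive term $(u+v+w)P_{\rho_X}P_{\rho_Y}$ is the well-behaved part. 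The monomial multiple $u^{c}v^{d}w^{c+d}P_{\rho_Z}$ merely translates the weighted polygon of $M_{\rho_Z}$, which preserves log-concavity of every section. The genuine difficulty is the \emph{subtraction}: log-concavity is badly behaved under differences, so one cannot argue abstractly and must instead use the geometry of the Newton polygons from Theorem~\ref{Newton Poly}. The key structural fact to isolate is that the translated polygon $\Delta_{\rho_Z}+(c,d)$ sits strictly inside the interior of $\Delta_{\rho_{Z'}} = (u+v+w)\cdot(\Delta_{\rho_X}+\Delta_{\rho_Y})$ away from its boundary lines $i=0$, $j=0$, $i+j=a+b+2c+2d-1$, so that along the boundary and near-boundary sections the subtraction is invisible and Theorems~\ref{Coefficients} and \ref{Fibonacci Critical} give log-concavity outright; the subtraction only affects a controlled interior band, where one would need a quantitative lower bound on the product term dominating the subtracted term.

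I expect that quantitative interior estimate to be the main obstacle, and it is the reason the statement is posed as a conjecture rather than a theorem. A promising intermediate step would be to prove a \emph{stronger} inductive hypothesis than bare log-concavity --- for instance that along each principal section the ratios $A_{k}^2 - A_{k-1}A_{k+1}$ are themselves bounded below by an explicit positive quantity of the same combinatorial order (a ``uniform'' or ``$\epsilon$-log-concavity''), so that the loss incurred by subtracting the translated polygon of $M_{\rho_Z}$ can be absorbed. One could also try to bypass the sign problem entirely by finding a combinatorial model: the non-negativity result of \cite{Pro} proceeds by interpreting the coefficients as counting certain perfect matchings / snake-graph objects, and if $A_{ij}$ counts lattice paths or matchings in a region that varies nicely along a principal direction, then log-concavity could follow from an injection of Lindström--Gessel--Viennot type, as is standard for such combinatorial sequences \cite{Sta}. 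I would pursue the polygon-geometry induction first for its directness, falling back on the combinatorial model if the interior estimate proves intractable, and in either case record the two special-series cases above as the unconditional partial result.
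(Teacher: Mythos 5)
This statement is a \emph{conjecture} and the paper proves it only in special cases, so there is no ``paper's own proof'' to match against; the relevant partial results are Theorem~\ref{log-concave_Fib} (Markov--Fibonacci series), Theorem~\ref{log-concave 2} (the lines $j=1$ and $i+j=a+b-2$), and Theorem~\ref{logconcave3rd} (the line $i+j=a+b-3$ when $\rho\le 3/5$). Your treatment of the Fibonacci case is correct and is essentially equivalent to the paper's Theorem~\ref{log-concave_Fib}: factoring $A_{ij}=\binom{n-j}{n+1-i-j}\binom{i+j}{j}$ and observing that each factor is log-concave along every principal direction (since in each direction one of the two binomial parameters is fixed and the other varies linearly) is the same computation the paper does by taking explicit ratios, just organised differently. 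Your identification of the subtraction term in \eqref{eq:P_evolution} as the fundamental obstruction to an induction along the topograph, and the observation that log-concavity is not preserved under differences, is accurate and is indeed why the statement is posed as a conjecture.

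However, two of your proposed steps are shakier than you present them. First, you assert a log-concavity result for the Markov--Pell series via the Binet formula \eqref{Binet}; the paper does not prove log-concavity for the Pell series (only saturation, Corollary~\ref{Pell Saturation}, and the sail coefficients in Section~9), and your Binet-based argument has a real gap: formula \eqref{Binet} expresses the whole polynomial $R_{2k+1}$ as a combination $\alpha\lambda_1^k+\beta\lambda_2^k$ where $\lambda_{1,2}$, $\alpha$, $\beta$ are themselves polynomials in $u,v,w$, so it does not organise the individual coefficients $A_{ij}$ along a fixed principal section into a scalar two-term linear recurrence to which the ``classical fact'' you invoke would apply; the actual recurrence controlling those coefficients is \eqref{Pell Coeff Rec}, which couples neighbouring rows and columns and is not two-term. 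Second, the claim that multiplication by $(u+v+w)$ is ``smoothing'' in the sense needed is not immediate: a horizontal (or diagonal) section of a product of two bivariate polynomials is \emph{not} the convolution of the corresponding sections of the factors, but a sum over all splittings of both coordinates, so the one-dimensional theorem that convolution preserves log-concavity does not apply directly. To make that part rigorous one would need a genuinely two-dimensional log-concavity notion (e.g.\ discrete log-concavity of $A_{ij}$ as a function on $\mathbb Z^2$, or ultra-log-concavity / Lorentzian-type conditions) that is stable under products with $(u+v+w)$ and under the monomial shift, and then one would still face the subtraction.
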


Note that the Conjecture \ref{Markovlogconcave} implies the Saturation Conjecture \ref{Saturation}, claiming the positivity of all the coefficients inside the Newton polygon.
Indeed, we know that the coefficients on the boundary are positive, so the appearance of internal zero coefficients will contradict the log-concavity property (\ref{logcon}).

\begin{theorem}
\label{log-concave_Fib}
    Conjecture 6.3 holds for the Markov-Fibonacci polynomials.
    \end{theorem}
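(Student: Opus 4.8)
The plan is to use the closed formula for the Markov-Fibonacci coefficients from Theorem~\ref{Fibonacci Poly Coeff}, namely $A_{ij} = \binom{n-j}{n+1-i-j}\binom{i+j}{j}$, and verify the log-concavity inequality \eqref{logcon} separately in each of the three principal directions. The key observation is that $A_{ij}$ factors as a product of a $\binom{n-j}{\,\cdot\,}$ term depending on $i,j$ and a $\binom{i+j}{j}$ term; along each principal line one of these two factors simplifies, so the inequality reduces to log-concavity of a single family of binomial coefficients (or a product of two such), which can be checked by the standard ratio computation $\binom{m}{k}^2/\binom{m}{k-1}\binom{m}{k+1} = \frac{(k+1)(m-k+1)}{k(m-k)} > 1$ recalled in the excerpt.

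First I would handle the \textbf{diagonal direction} $i+j = s$ constant. Here $\binom{i+j}{j} = \binom{s}{j}$ and $n+1-i-j = n+1-s$ is constant, so moving along the diagonal by decreasing $i$ and increasing $j$ gives $A = \binom{n-j}{n+1-s}\binom{s}{j}$ as a function of $j$ alone. The product of two log-concave positive sequences is log-concave (provided we stay inside the support, which the Newton-polygon description guarantees), and $j\mapsto\binom{n-j}{n+1-s}$ is log-concave since it is $\binom{m}{k}$ with $m = n-j$ decreasing linearly and $k$ fixed — equivalently $\binom{n-j}{n+1-s}$ as a function of $j$ is a ratio of linear polynomials product that one checks directly is log-concave — while $j\mapsto\binom{s}{j}$ is the classical binomial row. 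Next the \textbf{horizontal direction} $j$ fixed, $i$ varying: then $\binom{n-j}{n+1-i-j}$ is a binomial-row factor in the variable $i$ (with top $n-j$ fixed and bottom $n+1-i-j$ decreasing as $i$ increases), and $\binom{i+j}{j}$ has $i+j$ increasing linearly in $i$ with $j$ fixed, which is again log-concave by a direct ratio check; their product is then log-concave. The \textbf{vertical direction} $i$ fixed, $j$ varying is symmetric but slightly less clean because both binomial factors genuinely depend on $j$; here I would either write out the ratio $A_{i,j}^2/(A_{i,j-1}A_{i,j+1})$ explicitly as a product of four ratios of linear terms in $j$ and verify it is $\ge 1$, or invoke the symmetry of Markov-Fibonacci polynomials under $x\leftrightarrow y$ (which swaps $i$ and $j$ roles) to reduce the vertical case to the horizontal one — this symmetry is visible from the recurrence \eqref{Fibonacci Vieta} together with \eqref{symm}.

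I expect the \textbf{vertical direction to be the main obstacle}, since it is the only one in which neither factor of $A_{ij}$ trivializes, so one cannot simply cite ``product of log-concave sequences.'' The cleanest route is the symmetry reduction; failing that, the explicit inequality after clearing denominators is
\[
A_{i,j}^2 - A_{i,j-1}A_{i,j+1} \;=\; \binom{i+j}{j}^2\binom{n-j}{n+1-i-j}^2 - \binom{i+j-1}{j-1}\binom{i+j+1}{j+1}\binom{n-j+1}{n+2-i-j}\binom{n-j-1}{n-i-j},
\]
which after factoring becomes a product of ratios of the form $\frac{(k+1)(m-k+1)}{k(m-k)}$ each $>1$, so the difference is positive; one must only be careful at the boundary of the Newton polygon where some $A$ vanishes, but there the inequality $A_{i,j}^2 \ge 0 = A_{i,j-1}A_{i,j+1}$ (or with one neighbour zero) is immediate. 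Finally I would note that, as remarked after Conjecture~\ref{Markovlogconcave}, weak log-concavity for these polynomials re-proves the Saturation Conjecture for the Fibonacci series (Corollary~\ref{Fibonacci Saturation}), giving a consistency check on the computation.
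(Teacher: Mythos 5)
Your proposal follows the same basic strategy as the paper: use the closed formula $A_{ij}=\binom{n-j}{n+1-i-j}\binom{i+j}{j}$ from Theorem~\ref{Fibonacci Poly Coeff} and verify the inequality \eqref{logcon} in each of the three principal directions. The paper handles the horizontal and vertical directions by direct ratio computation and only invokes log-concavity of binomial rows for part of the diagonal case; your ``product of two log-concave sequences'' observation is a genuine streamlining of the diagonal and horizontal cases. However, two points need attention.

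First, the vertical direction is not actually the obstacle you think it is, and you are missing the cleanest resolution. By binomial symmetry, $\binom{n-j}{n+1-i-j}=\binom{n-j}{i-1}$, so for $i$ fixed and $j$ varying the coefficient factors as $A_{ij}=\binom{n-j}{i-1}\binom{i+j}{i}$. The first factor has fixed bottom $i-1$ and linearly decreasing top $n-j$, the second has fixed bottom $i$ and linearly increasing top $i+j$; both are log-concave in $j$ by the same ratio check you quote, so their product is log-concave. Thus the factorization trick you use for the horizontal and diagonal cases works for the vertical case as well, and no fallback is needed.

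Second, the proposed symmetry reduction is incorrect and should be dropped. The identity $M_\rho(x,y,z)=M_{1/\rho}(y,x,z)$ in \eqref{symm} sends $M_{1/(n+1)}$ to $M_{(n+1)/1}$, a \emph{different} Markov polynomial on a different region of the topograph, with a different Newton polygon. It does \emph{not} give a relation between $A_{ij}$ and $A_{ji}$ for the same polynomial $M_{1/(n+1)}$; indeed, by Theorem~\ref{Newton Poly} the Newton polygon of $M_{1/(n+1)}$ is the triangle with vertices $(1,0)$, $(n+1,0)$, $(0,n+1)$, which is visibly not symmetric under $i\leftrightarrow j$. Your fallback of computing the ratio $A_{i,j}^2/(A_{i,j-1}A_{i,j+1})$ directly is sound and is in fact what the paper does for the vertical case, so the proof can be completed that way, but the symmetry route cannot.
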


 \begin{proof}
    We need to check the log-concavity condition in all three principal directions. Using \eqref{eqn:Fib Coeff}, in the horizontal direction we have

     $$   A_{i, j}^2 = \binom{n-j}{n+1-i-j}^2\binom{i+j}{j}^2 = \left(\frac{(n-j)!^2}{(n+1-i-j)!^2(i-1)!^2}\right) \left(\frac{(i+j)!^2}{i!^2 j!^2}\right), 
     $$
            \begin{align*}
        A_{i-1, j} A_{i+1, j} &= \binom{n-j}{n+2-i-j} \binom{n-j}{n-i-j} \binom{i+j-1}{j} \binom{i+j+1}{j} \\
        &= \left(\frac{(n-j)!^2}{(n+2-i-j)!(i-2)!(n-i-j)!i!}\right) \left(\frac{(i-1+j)! (i+1+j)!}{j!^2(i-1)!(i+1)!}\right).
    \end{align*}
    This implies that
    \begin{align*}
        \frac{A_{i, j}^2}{A_{i-1, j} A_{i+1, j}} &= \left(\frac{n+2-i-j}{n+1-i-j}\right) \left(\frac{i}{i-1}\right) \left(\frac{i+j}{i+j+1}\right) \left(\frac{i+1}{i}\right) \\
        &= \left(\frac{n+2-i-j}{n+1-i-j}\right) \left(\frac{i}{i-1}\right) \left(\frac{i(i+1) + j(i+1)}{i(i+1) + ji}\right).
    \end{align*}
    We see that all three terms here are greater than $1$ and so we have the required inequality
$
        A_{i,j}^2 > A_{i-1, j}A_{i+1, j}.
$

    Similarly in the vertical case we have
  $$
        \frac{A_{i, j}^2}{A_{i, j-1} A_{i, j+1}} = \left[\left(\frac{n-j}{n+1-j}\right) \left(\frac{n+2-i-j}{n+1-i-j}\right)\right] \left[\left(\frac{i+j}{i+j+1}\right) \left(\frac{j+1}{j}\right)\right]
        $$
        $$
        = \left[\frac{(n+1-i-j)(n-j) + (n-j)}{(n+1-i-j)(n-j) + (n+1-i-j)}\right] \left[\frac{j(i+j) + (i+j)}{j(i+j) + j}\right],
$$
    with both of the final fractions being greater than $1$ (the first requires that $i \geq 1$ but the vertical line for $i = 0$ consists of only a single point, so can be ignored).

    In the diagonal case the second binomials in  $A_{i-1, j+1},  A_{i, j}, A_{i+1, j-1}$ are respectively
    $
  \binom{i+j}{j+1}, \, \binom{i+j}{j}, \,  \binom{i+j}{j-1},
$
    which are consecutive binomial coefficients satisfying the log-concavity property.
    As a result we have
$$
        \frac{A_{i, j}^2}{A_{i-1, j+1} A_{i+1, j-1}} \geq \left(\frac{(n-j)!^2}{(n-j+1)!(n-j-1)!}\right) \left(\frac{(n-i-j)!(n+2-i-j)!}{(n+1-i-j)!^2}\right)$$
        $$
      = \left(\frac{n-j}{n-j+1}\right) \left(\frac{n+2-i-j}{n+1-i-j}\right) = \frac{(n-j)(n+1-i-j) + (n-j)}{(n-j)(n+1-i-j) + (n+1-i-j)},
$$
    which is greater than or equal to $1$ as required.
 \end{proof}
 \medskip

The Conjecture \ref{Markovlogconcave} holds along the boundaries of the polygon, since these coefficients are precisely binomial coefficients, which are known to be log-concave. We will show now that this holds also on the $2$nd horizontal line and upper diagonal of the Newton polygon.

\begin{lemma}
    Sequences of the form
    \begin{equation*}
        x_k = A\binom{c}{k} + B\binom{c}{k-1}, \,\, k=0,1,\dots, c
    \end{equation*}
    for positive $A, B, c$ are always log-concave.
\end{lemma}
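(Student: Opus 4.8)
The plan is to reduce the log-concavity of $x_k = A\binom{c}{k} + B\binom{c}{k-1}$ to a single algebraic inequality and then verify that inequality by clearing denominators. First I would write everything in terms of the single binomial family $\binom{c}{k}$ by using the elementary identity $\binom{c}{k-1} = \frac{k}{c-k+1}\binom{c}{k}$, so that
\begin{equation*}
x_k = \binom{c}{k}\left(A + B\,\frac{k}{c-k+1}\right) = \binom{c}{k}\,\frac{A(c-k+1) + Bk}{c-k+1}.
\end{equation*}
Then $x_k^2 - x_{k-1}x_{k+1}$ factors as $\binom{c}{k}^2$ times a ratio of the linear forms $L_k := A(c-k+1)+Bk$ evaluated at $k-1,k,k+1$, multiplied by the corresponding ratio of binomial squares $\binom{c}{k}^2/(\binom{c}{k-1}\binom{c}{k+1})$.

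Next I would use the known log-concavity of $\binom{c}{k}$ (with the explicit surplus factor $\frac{k+1}{k}\cdot\frac{c-k+1}{c-k}>1$ already recorded in the excerpt) together with the fact that $L_k$ is a \emph{linear} (hence affine) function of $k$ with nonnegative values on $\{0,\dots,c+1\}$: for an affine function $L$, one has $L_{k-1}L_{k+1} \le L_k^2$ by the AM-GM-type inequality $L_{k-1}L_{k+1} = (L_k - d)(L_k + d) = L_k^2 - d^2 \le L_k^2$ where $d$ is the common difference $B-A$. Combining the two inequalities — binomials are log-concave, and the affine factor is log-concave wherever it is nonnegative — gives $x_k^2 \ge x_{k-1}x_{k+1}$, provided all three of $L_{k-1},L_k,L_{k+1}$ are $\ge 0$, which holds for $1\le k\le c-1$ since $A,B,c>0$ force $L_k = A(c-k+1)+Bk>0$ there. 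One should note that a product of two log-concave positive sequences is log-concave, which is exactly the combination being used here.

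I would organize the write-up as: (i) the identity reducing $x_k$ to $\binom{c}{k}\cdot L_k/(c-k+1)$; (ii) the observation that both $\left(\binom{c}{k}\right)_k$ and $\left(L_k/(c-k+1)\right)_k$ — or more cleanly, $\left(\binom{c}{k}\right)$, $(L_k)$, and $\left(1/(c-k+1)\right)$ treated together — are positive log-concave sequences on the relevant range; (iii) invoke that the termwise product of positive log-concave sequences is log-concave. Alternatively, and perhaps most transparently for a splice into this paper, I would just multiply out $x_k^2 - x_{k-1}x_{k+1}$ directly: expanding using $\binom{c}{k-1}=\frac{k}{c-k+1}\binom{c}{k}$ and $\binom{c}{k+1}=\frac{c-k}{k+1}\binom{c}{k}$ turns the claim into
\begin{equation*}
\left(A + B\tfrac{k}{c-k+1}\right)^2 \ \ge\ \left(A + B\tfrac{k-1}{c-k+2}\right)\left(A + B\tfrac{c-k}{k+1}\cdot\tfrac{c-k+1}{c-k}\cdot\tfrac{?}{?}\right)\cdots
\end{equation*}
which after clearing the (positive) denominators becomes a polynomial inequality in $A,B,c,k$ with all monomials having nonnegative coefficients, hence manifestly true.

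\textbf{Main obstacle.} The genuine content is small; the only thing to be careful about is the \emph{range of validity} and degenerate endpoints — a principal "line" in the Newton polygon may be very short, and the inequality $x_k^2\ge x_{k-1}x_{k+1}$ is only asserted for interior $k$, so one must check that $L_{k-1},L_k,L_{k+1}$ are all nonnegative there (they are, since $A,B,c>0$) and that the binomial factors are nonzero. The bookkeeping in clearing denominators — keeping track of which powers of $(c-k+1)$, $(c-k+2)$, $(k+1)$ appear — is the one place an error could creep in, so I would either push through that bookkeeping carefully or, preferably, avoid it entirely by the "product of log-concave sequences is log-concave" route, which sidesteps all the arithmetic.
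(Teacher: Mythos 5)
Your product decomposition has a genuine gap: the factor $\tfrac{1}{c-k+1}$ (and hence also $g_k:=\tfrac{L_k}{c-k+1}$) is \emph{not} log-concave --- it is log-convex. Indeed,
\begin{equation*}
\frac{\bigl(1/(c-k+1)\bigr)^2}{\bigl(1/(c-k+2)\bigr)\bigl(1/(c-k)\bigr)}
=\frac{(c-k+2)(c-k)}{(c-k+1)^2}=\frac{(c-k+1)^2-1}{(c-k+1)^2}<1,
\end{equation*}
so the inequality runs the wrong way. The same is true of $g_k$ itself: with $A=B=1$, $c=3$ one gets $g_0=1$, $g_1=4/3$, $g_2=2$, $g_3=4$, and $g_1^2=16/9<2=g_0g_2$. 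Thus $x_k=\binom{c}{k}\,g_k$ is a product of a log-concave sequence with a log-\emph{convex} one, and the ``termwise product of log-concave sequences is log-concave'' lemma does not apply. What actually makes the lemma true is that the log-concavity \emph{surplus} of $\binom{c}{k}$ (the factor $\tfrac{k+1}{k}\cdot\tfrac{c-k+1}{c-k}>1$) outweighs the deficit coming from $g_k$; that quantitative balancing is precisely the content that your decomposition skips, so step (ii) of your outline is false as stated.

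Your ``alternative'' route of expanding $x_k^2-x_{k-1}x_{k+1}$ directly is the one the paper actually follows: it matches the $A^2$ and $B^2$ terms via log-concavity of $\binom{c}{k}$ and $\binom{c}{k-1}$, and then checks that the $AB$ cross-terms favour $x_k^2$, reducing to $\binom{c}{k-1}\binom{c}{k}\ge\binom{c}{k-2}\binom{c}{k+1}$, which follows from the ratio $\tfrac{k+1}{k-1}\cdot\tfrac{c-k+2}{c-k}>1$. You left that computation unfinished (your display contains a $\tfrac{?}{?}$ placeholder), so as written the proposal does not close. If you want to avoid the bookkeeping entirely, the cleanest repair is to observe that $x_k$ is the coefficient of $t^k$ in $(A+Bt)(1+t)^c$, which has only real zeros when $A,B>0$, and then invoke the Newton--Stanley criterion (Theorem~\ref{thm:stanley}); this is exactly the tool the paper deploys for the subsequent diagonal.
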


\begin{proof}
We have
    \begin{equation*}
        x_k^2 = A^2 \binom{c}{k}^2 + 2AB \binom{c}{k} \binom{c}{k-1} + B^2 \binom{c}{k-1}^2,
    \end{equation*}
    \begin{multline*}
        x_{k-1} x_{k+1} = A^2 \binom{c}{k-1} \binom{c}{k+1} + AB \left[\binom{c}{k-1} \binom{c}{k} + \binom{c}{k-2} \binom{c}{k+1} \right] \\+ B^2 \binom{c}{k-2} \binom{c}{k}.
    \end{multline*}
 It is clear that the coefficients of $A^2$ and $B^2$ are greater in the case of $x_k^2$ due to the log-concavity of the binomial coefficients. So if we can show that
     \begin{equation*}
         \binom{c}{k-1} \binom{c}{k} \geq \binom{c}{k-2} \binom{c}{k+1},
     \end{equation*}
     then the result will follow.
Since 
     \begin{equation*}
         \frac{\binom{c}{k-1} \binom{c}{k}}{\binom{c}{k-2} \binom{c}{k+1}} = \frac{k+1}{k-1} \frac{c-k+2}{c-k} > 1,
     \end{equation*}
  this completes the proof.
\end{proof}


\begin{theorem}
\label{log-concave 2}
    The coefficients of Markov polynomial $M_\rho$ that appear on the lines 
   with $j=1$ and $i + j = a + b - 2$ of its Newton polygon
   are strictly log-concave. 
\end{theorem}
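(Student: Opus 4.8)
The plan is to use the explicit formulae from Theorem~\ref{Coefficients} for the coefficients on these two lines and then invoke the Lemma above. For the line $i+j=a+b-2$, Theorem~\ref{Coefficients} tells us that the relevant sequence is
\[
  A_{i,j} = (a-1)\binom{a+b-2}{i} + (b-a)\binom{a+b-3}{i-1},
\]
as $i$ runs over the admissible range. Setting $c = a+b-2$, $A = a-1$ and $B = b-a$, this is exactly a sequence of the form $x_k = A\binom{c}{k} + B\binom{c}{k-1}$ treated in the Lemma, \emph{provided} $A,B > 0$; and indeed $a-1 > 0$ (since for $\rho = a/b$ with $a=1$ the line $i+j=a+b-2$ has already been handled by the boundary/diagonal cases, or one checks it directly) and $b - a > 0$ whenever $a < b$. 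The only remaining issue is the endpoints where one of the two binomials vanishes because $i$ or $i-1$ leaves the range $[0,c]$; there the sequence is just a single binomial times a positive constant, still log-concave, so the Lemma applies verbatim on the interior and the boundary terms do not spoil the inequality $x_k^2 \ge x_{k-1}x_{k+1}$.

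For the line $j=1$, the analogous reading of Theorem~\ref{Coefficients} gives
\[
  A_{i,1} = (3a-1)\binom{b-2}{i-a} + (b-2)\binom{b-3}{i-a-1},
\]
so substituting $k = i-a$ we again get a sequence of the form $x_k = A\binom{c}{k} + B\binom{c}{k-1}$ with $c = b-2$, $A = 3a-1$ and $B = b-2$. Both coefficients are manifestly positive (with the degenerate case $b=2$, i.e.\ $\rho = a/2$, where the line $j=1$ is short and can be checked directly), so the Lemma immediately yields log-concavity of $(A_{i,1})_i$.

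Thus the whole argument reduces to two steps: first, identify the coefficient sequences on these two lines, via Theorem~\ref{Coefficients}, as instances of the two-term binomial combination in the Lemma with positive $A,B,c$; second, dispose of the small-parameter degenerate cases ($a=1$ for the diagonal line, $b=2$ for the line $j=1$) where the Newton polygon is too thin for the generic formula to have full width, by direct inspection. I expect the second step — carefully tracking the admissible range of the index and confirming that the truncated sequences at the ends remain log-concave — to be the only genuinely fiddly part; the first step is essentially a pattern match onto the Lemma. One should also remark that "strictly" log-concave as claimed in the statement follows because the ratio computed in the proof of the Lemma, $\tfrac{k+1}{k-1}\tfrac{c-k+2}{c-k}$, is strictly greater than $1$, so all the inequalities are strict on the interior of each line.
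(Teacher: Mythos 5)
Your proof matches the paper's argument exactly: the paper disposes of this theorem in one line, stating that it follows from Theorem~\ref{Coefficients} together with the preceding Lemma, which is precisely the pattern-match you carry out. The extra care you take over the positivity of $A$ and $B$, the degenerate cases $a=1$ and $b=2$, and the strictness of the inequality is all appropriate and fills in what the paper leaves implicit.
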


\begin{proof}
The proof follows from Theorem \ref{Coefficients} and the previous lemma.
\end{proof}

As for the $3$rd diagonal we are only able to prove the log-concavity in the
case where $\rho\leq3/5$.

\begin{theorem}
\label{logconcave3rd}
    The coefficients of Markov polynomials $M_{\rho}$ that appear on the line with $i + j = a + b - 3$ of its Newton polygon are log-concave for $\frac{a}{b} \leq \frac{3}{5}$.
\end{theorem}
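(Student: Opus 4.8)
The plan is to read the coefficients on the third diagonal off Theorem~\ref{Coefficients} and then reduce weak log-concavity to the real-rootedness of an auxiliary quadratic. By Theorem~\ref{Coefficients}, for $\rho=a/b$ the generating polynomial of the third-diagonal coefficients is
\[
  \sum_{i} A_{i,\,a+b-3-i}\,u^{i}
  \;=\; T_2(u,1):=\alpha\,(u+1)^{a+b-3}+\beta\,u(u+1)^{a+b-4}+\gamma\,u^{2}(u+1)^{a+b-5},
\]
where $\alpha=\tfrac{(a-1)(a-2)}{2}$, $\beta=a(b-a-1)$ and $\gamma=\tfrac12\big((b-a)^2+5a-3b\big)$. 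When $a+b\le 4$ this diagonal contains at most two lattice points and there is nothing to prove, so I would assume $a+b\ge 5$ and factor out the common power of $(u+1)$:
\[
  T_2(u,1)=(u+1)^{a+b-5}\,Q(u),\qquad Q(u)=\alpha(u+1)^2+\beta u(u+1)+\gamma u^{2}=(\alpha+\beta+\gamma)u^{2}+(2\alpha+\beta)u+\alpha .
\]

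Next I would invoke the classical fact (see \cite{Sta}) that a polynomial $\sum_k c_k u^k$ with only real zeros has a log-concave coefficient sequence: Newton's inequalities give $c_k^2\binom{n}{k-1}\binom{n}{k+1}\ge c_{k-1}c_{k+1}\binom{n}{k}^2$, which together with the log-concavity of the binomial coefficients yields $c_k^2\ge c_{k-1}c_{k+1}$ (trivially so when the right-hand side is $\le 0$). Since the factor $(u+1)^{a+b-5}$ contributes only the real root $-1$, it then suffices to show that $Q$ is real-rooted whenever $a/b\le 3/5$. If $a\in\{1,2\}$ then $\alpha=0$ and $Q(u)=u\big((\beta+\gamma)u+\beta\big)$ is manifestly real-rooted, so the remaining and essential case is $a\ge 3$.

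For $a\ge 3$ the whole statement comes down to the inequality $\operatorname{disc}(Q)\ge 0$. A short computation gives $\operatorname{disc}(Q)=(2\alpha+\beta)^2-4\alpha(\alpha+\beta+\gamma)=\beta^2-4\alpha\gamma$. Setting $m=b-a\ge 1$, the hypothesis $a/b\le 3/5$ is exactly $m\ge 2a/3$, and
\[
  f(m):=\beta^{2}-4\alpha\gamma=a^{2}(m-1)^{2}-(a-1)(a-2)\big(m^{2}-3m+2a\big)
\]
is a quadratic in $m$ with positive leading coefficient $3a-2$, whose vertex $m^{\ast}=\tfrac{-a^{2}+9a-6}{2(3a-2)}$ satisfies $m^{\ast}<1$ (this reduces to $(a-1)(a-2)>0$). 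Hence $f$ is increasing on $[2a/3,\infty)$, so $f(m)\ge f(2a/3)=\tfrac{a^{2}}{9}\big((2a-3)^{2}-4(a-1)(a-2)\big)=\tfrac{a^{2}}{9}>0$, and $Q$ has distinct real roots, which finishes the proof. The only step doing real work is this last display, and it is also precisely where the hypothesis enters: $3/5$ is the sharp threshold for the sign of $\operatorname{disc}(Q)$, and for $\rho>3/5$ (already at $\rho=3/4$) the quadratic $Q$ acquires a pair of complex conjugate roots, so this method cannot be pushed past $3/5$ without a new idea.
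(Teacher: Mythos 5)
Your proposal is correct and follows essentially the same route as the paper's own proof: invoke the Newton--Stanley real-rootedness criterion, factor the third-diagonal generating polynomial as $(1+X)^{a+b-5}$ times a quadratic, and show the discriminant of that quadratic is at least $a^2/9$ under $a/b\le 3/5$. Your handling of the degenerate cases ($a\in\{1,2\}$, $a+b\le 4$) and the $m=b-a$ monotonicity argument are slightly more explicit than the paper's direct inequality manipulation, but the key idea and even the final bound $a^2/9$ are identical.
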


\begin{proof}

We will use the following result quoted by Stanley \cite{Sta} (but which
dates back to Newton \cite{Newton}).

\begin{theorem}[Stanley \cite{Sta}]
\label{thm:stanley}
    Let $p(X) = \sum_{k = 0}^n \alpha_k X^k$. If the polynomial $p(X)$ has only real zeros, then the sequence $\alpha_k$ is log-concave.
\end{theorem}

 Let $\alpha_k$ be the coefficients on the diagonal $i + j - 3$ stated in Theorem~\ref{Coefficients}:
    \begin{equation*}
        \alpha_k = E \binom{a+b-3}{k} + F \binom{a+b-4}{k-1} + G \binom{a+b-5}{k-2}, 
    \end{equation*}
$$
        E = \frac{(a-1)(a-2)}{2},\,\, 
        F = a(b-a-1), \,\,
        G = \frac{1}{2}\left[(b-a)^2 + 5a - 3b \right].
$$
    Let \[p(X) = \sum_{k = 0}^{a+b-3} \alpha_k X^k,\] then we have
    \begin{align*}
        p(X) &= E (1+X)^{a+b-3} + FX (1+X)^{a+b-4} + GX^2 (1+X)^{a+b-5} \\
        &= (1+X)^{a+b-5} \left[E(1+X)^2 + FX(1+X) + GX^2\right]=(1+X)^{a+b-5} q(X),
    \end{align*}
  where
    \begin{align*}
        q(X) &:= (E + F + G) X^2 + (2E + F)X + E \\
        &= \frac{(b-1)(b-2)}{2} X^2 + (ab-4a+2) X + \frac{(a-1)(a-2)}{2}
    \end{align*}
 Since $(1+X)^{a+b-5}$ has clearly real roots, to apply Theorem~\ref{thm:stanley} we need to show that $q(X)$ has only real roots. The discriminant $\Delta$ of $q(X)$ is
    \begin{align*}
        \Delta &= (ab-4a+2)^2 - 4 \frac{(b-1)(b-2)}{2} \frac{(a-1)(a-2)}{2} \\
        &=  (ab-4a+2)^2 - (ab-2a-b+2)(ab-a-2b+2).
    \end{align*}
If $a/b\leq3/5$ then 
$  a+2b \geq 4a + \frac{a}3$ and $2a+b \geq 4a-\frac{a}3,$
so
    \begin{align*}
        \Delta &\geq (ab-4a+2)^2 - \left(ab-4a+2+\frac{a}3\right)
\left(ab-4a+2-\frac{a}3\right) =\frac{a^2}9\geq0.
    \end{align*}
\end{proof}


\section{Continuum limit and entropy function}

Consider the sequence of fractions $\rho_n=a_n/b_n$ tending to some $\alpha \in \mathbb [0,1]$ when $n\to \infty.$ 
Let $(i_n, \, j_n)$ be the sequence of points in the Newton polygons
\beq{Newton_n}
\Delta_{\rho_n}=\{i, j \in \mathbb Z_{\geq 0}: i/a_n+j/b_n\geq 1, \quad
i+j\leq a_n+b_n-1\}
\eeq 
such that the corresponding points $(\xi_n=i_n/b_n, \eta_n=j_n/b_n) \to (\xi, \eta) \in \mathcal N_{\alpha}$ as $n\to \infty,$
where $\mathcal N_{\alpha}$ is the scaled Newton polygon 
\beq{scaledN}
\mathcal N_{\alpha}:=\{\xi,\eta \in \mathbb R: \xi,\eta > 0,\quad 
\xi+\alpha \eta>\alpha, \quad \xi+\eta<\alpha+1.\}
\eeq 

Define the {\it entropy function} of this sequence as 
\beq{entropy}
\mathcal H=\limsup_{n \to \infty} \frac{1}{b_n}\ln A_{i_n,j_n}(\rho_n),
\eeq 
where $A_{i,j}(\rho_n)$ are the coefficients of the Markov polynomial $M_{\rho_n}.$

\begin{proposition}
 The corresponding  limit superior does exist for any such sequences.
\end{proposition}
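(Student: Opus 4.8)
The plan is to show that the sequence $b_n^{-1}\ln A_{i_n,j_n}(\rho_n)$ is bounded above; since $\limsup$ of any bounded-above real sequence exists in $\mathbb{R}$, this suffices. The key is a crude-but-uniform upper bound on the individual coefficients of $M_{\rho}$ in terms of the total sum of all coefficients, together with an explicit estimate for that sum.

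First I would observe that each $A_{i_n,j_n}(\rho_n)$ is positive and is dominated by $P_{\rho_n}(1,1,1)$, the sum of all coefficients of the numerator $P_{\rho_n}$. So it is enough to bound $b_n^{-1}\ln P_{\rho_n}(1,1,1)$ from above. Setting $u=v=w=1$ in the evolution relation \eqref{eq:P_evolution} gives
\begin{equation*}
  P_{\frac{a+2c}{b+2d}}(1,1,1) = 3\, P_{\frac{c}{d}}(1,1,1)\, P_{\frac{a+c}{b+d}}(1,1,1) - P_{\frac{a}{b}}(1,1,1),
\end{equation*}
so in particular $P_{\frac{a+2c}{b+2d}}(1,1,1) \le 3\, P_{\frac{c}{d}}(1,1,1)\, P_{\frac{a+c}{b+d}}(1,1,1)$. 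Equivalently, writing $N_\rho := P_\rho(1,1,1)$ and recalling that $N_\rho$ specialises further (at $x=y=z=1$, $M_\rho(1,1,1)=m_\rho$, hence $N_\rho = m_\rho$ since all denominators become $1$), one sees that $N_\rho = m_\rho$ is exactly the Markov number attached to $\rho$. Thus the problem reduces to the classical fact that Markov numbers grow at most exponentially in $b_n$. Indeed, from the Vieta recursion $m' = 3m_1 m_2 - m_3 \le 3 m_1 m_2$ along the topograph, an easy induction on the Conway tree (or on $a+b$) shows $m_{a/b} \le 3^{a+b} \le 3^{2b_n}$ for $\rho=a/b$, giving $b_n^{-1}\ln A_{i_n,j_n}(\rho_n) \le b_n^{-1}\ln m_{\rho_n} \le 2\ln 3$, a bound independent of $n$. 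Hence the $\limsup$ in \eqref{entropy} is a finite real number.

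The main obstacle is purely bookkeeping: one must make sure the denominators in \eqref{eqn:Rational Polynomials} do not interfere, i.e. that $P_\rho(1,1,1)=m_\rho$ and not merely $P_\rho(1,1,1)\ge m_\rho$; this is immediate since $M_\rho(1,1,1)=m_\rho$ and the denominator $x^{a-1}y^{b-1}z^{a+b-1}$ evaluates to $1$. A second minor point is to confirm the elementary exponential bound $m_{a/b}\le C^{a+b}$ on Markov numbers by induction along the topograph, using the base cases $m_{0/1}=1$, $m_{1/1}=2$, $m_{1/2}=5$ and the estimate $m' \le 3 m_1 m_2$ together with $a+b = (a+2c) + (b+2d)$ dominating the sum of the relevant numerator/denominator indices of the two parents. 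With these in hand the proposition follows with essentially no further work; one should note that the statement only asserts existence of the $\limsup$, so no lower bound or convergence is needed here.
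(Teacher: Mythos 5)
Your approach follows the paper's key step exactly: bound the coefficient $A_{i_n,j_n}(\rho_n)$ by the Markov number $m_{\rho_n} = P_{\rho_n}(1,1,1)$, and then control the growth of $\ln m_{\rho_n}/b_n$. Where you diverge is in the second step: the paper simply invokes the growth estimates of Fock \cite{Fock} (see also \cite{SorVe}) for Markov numbers, whereas you supply a self-contained elementary exponential bound via the Vieta recursion. That is a perfectly legitimate and arguably more transparent route, since all the proposition needs is boundedness above of $b_n^{-1}\ln m_{\rho_n}$, not the finer asymptotics that Fock's results encode.

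One small but real slip: the induction hypothesis $m_{a/b}\le 3^{a+b}$ does not close on its own. With $\rho_X=c/d$, $\rho_Y=(a+c)/(b+d)$, $\rho_{Z'}=(a+2c)/(b+2d)$ one has, writing $s(p/q)=p+q$, the exact identity $s(\rho_{Z'})=s(\rho_X)+s(\rho_Y)$; so $m_{\rho_{Z'}}\le 3\,m_{\rho_X}m_{\rho_Y}\le 3\cdot 3^{s(\rho_X)}\cdot 3^{s(\rho_Y)}=3^{\,s(\rho_{Z'})+1}$, one power of $3$ too many. The fix is to strengthen the hypothesis to $m_{a/b}\le 3^{a+b-1}$: then $3\cdot 3^{s(\rho_X)-1}\cdot 3^{s(\rho_Y)-1}=3^{\,s(\rho_{Z'})-1}$ exactly, and the base cases $m_{0/1}=1\le 3^{0}$, $m_{1/1}=2\le 3^{1}$, $m_{1/2}=5\le 3^{2}$ all hold. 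This still gives $b_n^{-1}\ln m_{\rho_n}\le 2\ln 3$ since $a_n\le b_n$, and the conclusion follows. With that correction your argument is sound.
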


The proof follows from the estimate $A_{i,j}(\rho_n)<m_{\rho_n},$ where $m_{\rho_n}$ is the corresponding Markov number, and the results of Fock \cite{Fock} (see also \cite{SorVe}).

\medskip
We have two natural questions:

Q.1 {\it Does the entropy $\mathcal H$ depend only on $\xi, \eta$ and $\alpha$, but not on the choice of the sequences?}

We conjecture that at least for irrational $\alpha$ this is the case.

Q.2 {\it What are the analytic properties of the entropy function $\mathcal H_\alpha(\xi,\eta)$ on the corresponding Newton polygon $\mathcal N_\alpha$?}

We conjecture that the entropy function $\mathcal H_\alpha(\xi,\eta)$ is strictly concave on the Newton polygon $\mathcal N_\alpha$ (and, in particular, it is continuous there).

We prove this now for the sequence $\rho_n=1/n$ corresponding to Fibonacci polynomials. In that case the scaled Newton polygon is the triangle with vertices at $(0, 0), (1, 0), (0, 1)$.

Let us start with a well-known fact about binomial coefficients.

Let $m_n$ be a sequence of positive integers such that $m_n/n \to p \in [0,1]$ when $n \to \infty.$

\begin{proposition}[Entropy Function for Binomial] For any such sequence
\label{binom entropy}
    \begin{equation*}
        \lim_{n \to \infty} \frac{1}{n} \ln \binom{n}{m_n} = H(p),
    \end{equation*}
   \beq{H}
    H(p): = -p \ln p - (1-p) \ln(1-p).
    \eeq
\end{proposition}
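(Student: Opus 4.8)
The plan is to sandwich $\binom{n}{m_n}$ between two quantities that differ only by a subexponential (polynomial in $n$) factor, both of order $e^{nH(p_n)}$ with $p_n:=m_n/n$, and then let $n\to\infty$, using the continuity of $H$.

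First I would record the classical two-sided estimate, valid for every integer $0\le m\le n$ with $p=m/n$:
$$\frac{1}{n+1}\,e^{nH(m/n)}\ \le\ \binom{n}{m}\ \le\ e^{nH(m/n)}.$$
Here we adopt the usual convention $0\ln 0=0$, under which $H$ is continuous on the whole closed interval $[0,1]$ with $H(0)=H(1)=0$; since also $\binom{n}{0}=\binom{n}{n}=1$, the endpoint values $p\in\{0,1\}$ require no separate treatment and the estimate above holds uniformly over all admissible $m$. The upper bound comes from evaluating the identity $1=(p+(1-p))^n\ge\binom{n}{m}p^m(1-p)^{n-m}$ at $p=m/n$ and noting that $p^m(1-p)^{n-m}=e^{-nH(p)}$. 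For the lower bound, among the $n+1$ non-negative summands of $\sum_{k=0}^n\binom{n}{k}p^k(1-p)^{n-k}=1$ the largest is attained at $k=m=np$ (the ratio $t_{k+1}/t_k=\frac{n-k}{k+1}\cdot\frac{p}{1-p}$ of consecutive terms decreases through $1$ there), so that summand is at least $1/(n+1)$, and dividing by $p^m(1-p)^{n-m}=e^{-nH(p)}$ gives the claim.

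Next I would take logarithms of the displayed chain, divide by $n$, and obtain
$$H(p_n)-\frac{\ln(n+1)}{n}\ \le\ \frac{1}{n}\ln\binom{n}{m_n}\ \le\ H(p_n).$$
Since $p_n\to p$ and $H$ is continuous, $H(p_n)\to H(p)$; since $\tfrac{1}{n}\ln(n+1)\to 0$, the squeeze theorem forces $\tfrac{1}{n}\ln\binom{n}{m_n}\to H(p)$, which is the assertion.

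There is essentially no obstacle here: the statement is classical, and the only point demanding a little care is the behaviour near $p\in\{0,1\}$ together with the convention $0\ln 0=0$, which the bounds above have been arranged to absorb uniformly. One could alternatively apply Stirling's formula $\ln N!=N\ln N-N+O(\ln N)$ to each of $n!$, $m_n!$ and $(n-m_n)!$, but then the sub-cases in which $m_n$ or $n-m_n$ stays bounded would need to be discussed separately, so the elementary sandwich is cleaner.
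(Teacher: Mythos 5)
Your proof is correct, but it takes a genuinely different route from the paper's. The paper applies Stirling's formula $\ln n! = n\ln n - n + \Ord(\ln n)$ to $n!$, $m_n!$ and $(n-m_n)!$ and then passes to the limit; you instead establish the uniform elementary sandwich
\[
\frac{1}{n+1}\, e^{nH(m/n)} \;\le\; \binom{n}{m} \;\le\; e^{nH(m/n)},
\]
obtained from the binomial identity $(p+(1-p))^n=1$ evaluated at $p=m/n$ (upper bound) and from the observation that the largest of the $n+1$ non-negative summands is the $k=m$ term and hence is at least $1/(n+1)$ (lower bound), and then squeeze. Both arguments are valid, but yours is a bit more robust: as you note, the Stirling route silently assumes that $m_n$ and $n-m_n$ both tend to infinity, since $\Ord(\ln m_n)/n\to 0$ requires control when $m_n$ is bounded; your two-sided bound holds uniformly for all $0\le m\le n$ under the convention $0\ln 0=0$, so the boundary cases $p\in\{0,1\}$ are absorbed automatically. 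The paper's version is shorter and fits its informal style, while yours is self-contained, fully elementary, and needs no asymptotic input beyond $\ln(n+1)/n\to 0$ and the continuity of $H$ on $[0,1]$.
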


\begin{proof} Recall the Stirling formula
$$
n!\sim\sqrt{2\pi n}(n/e)^n,
$$
implying that
\beq{Stirling}
\ln n!=n \ln n-n+O(\ln n).
\eeq
Using this we have
$$
\frac{1}{n} \ln \binom{n}{m_n}=\frac{1}{n}\left[-m_n \ln \frac{m_n}{n}-(n-m_n)\ln \frac{n-m_n}{n}+ O(\ln n)\right],
$$
which turns to $H(p)$ when $n$ goes to infinity.
\end{proof}

To compute the entropy of the Fibonacci polynomials $M_{1/n}$ we can use the explicit formula ~\eqref{eqn:Fib Coeff} for the corresponding coefficients $A_{i,j}(1/n).$ 

\begin{proposition}[Entropy Function for Fibonacci Polynomials]
\label{Fib entropy}
    \begin{equation*}
        \lim_{n \to \infty} \frac{1}{n} \ln A_{i_n,j_n}(1/n) = (1-\eta) H\left(\frac{\xi}{1-\eta}\right) + (\xi+\eta) H\left(\frac{\xi}{\xi+\eta}\right),
    \end{equation*}
    where $\xi =   \lim_{n \to \infty}\frac{i_n}{n}, \eta =   \lim_{n \to \infty}\frac{j_n}{n}$ and $H$ is given by (\ref{H}).
\end{proposition}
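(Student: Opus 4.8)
The plan is to substitute the exact formula for the Fibonacci coefficients into a rescaled form of Proposition~\ref{binom entropy} and then simplify using the symmetry of the entropy function $H$. By Theorem~\ref{Fibonacci Poly Coeff} (applied with $n$ replaced by $n-1$), the coefficients of $M_{1/n}(x,y,z)$ are
\[
  A_{i,j}(1/n)=\binom{n-1-j}{\,n-i-j\,}\binom{i+j}{j},
\]
so that
\[
  \frac1n\ln A_{i_n,j_n}(1/n)=\frac1n\ln\binom{n-1-j_n}{\,n-i_n-j_n\,}+\frac1n\ln\binom{i_n+j_n}{j_n}.
\]
It therefore suffices to evaluate the limit of each of the two terms on the right.

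The one auxiliary fact I would record first is a scaled version of Proposition~\ref{binom entropy}: if $M_n,N_n$ are positive integers with $N_n\to\infty$, $N_n/n\to c>0$ and $M_n/N_n\to p\in(0,1)$, then $\tfrac1n\ln\binom{N_n}{M_n}\to c\,H(p)$. This follows either by writing $\tfrac1n\ln\binom{N_n}{M_n}=\tfrac{N_n}{n}\cdot\tfrac1{N_n}\ln\binom{N_n}{M_n}$ and applying Proposition~\ref{binom entropy} to the inner factor, or directly from Stirling in the form \eqref{Stirling}: one has $\ln\binom{N_n}{M_n}=N_n\ln N_n-M_n\ln M_n-(N_n-M_n)\ln(N_n-M_n)+O(\ln n)$, and on dividing by $n$ the $\ln n$ and $\ln c$ contributions cancel, leaving $c\,H(p)$.

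Now for $(\xi,\eta)$ in the open scaled Newton polygon $\mathcal N_0=\{\xi,\eta>0,\ \xi+\eta<1\}$ the quantities $1-\eta$, $1-\xi-\eta$ and $\xi+\eta$ are all strictly positive, and since $i_n/n\to\xi$, $j_n/n\to\eta$ we have
\[
  \frac{n-1-j_n}{n}\to1-\eta,\quad\frac{n-i_n-j_n}{\,n-1-j_n\,}\to\frac{1-\xi-\eta}{1-\eta},\quad\frac{i_n+j_n}{n}\to\xi+\eta,\quad\frac{j_n}{i_n+j_n}\to\frac{\eta}{\xi+\eta},
\]
all limits lying in the admissible ranges for the lemma. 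Applying it to the two binomials gives
\[
  \frac1n\ln A_{i_n,j_n}(1/n)\to(1-\eta)H\!\left(\frac{1-\xi-\eta}{1-\eta}\right)+(\xi+\eta)H\!\left(\frac{\eta}{\xi+\eta}\right).
\]
Since $H(p)=H(1-p)$ by \eqref{H}, the two arguments may be replaced by $\xi/(1-\eta)$ and $\xi/(\xi+\eta)$, which gives precisely the claimed identity; and as $i_n/n,j_n/n$ converge by hypothesis, so do both summands, so the statement holds with an honest limit and not just a $\limsup$.

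I expect no serious obstacle here: the only slightly delicate points are the scaled Stirling estimate (routine; the additive $-1$ in $n-1-j_n$ changes $\ln(\cdot)!$ by only $O(\ln n)$ and is harmless) and, if one wishes to extend the formula continuously to the closed polygon, the degenerate boundary cases $\xi=0$, $\eta=0$ or $\xi+\eta=1$, which require supplementing the argument with the crude bounds $1\le\binom NM\le2^N$ together with the continuity of $H$ on $[0,1]$ through $H(0)=H(1)=0$.
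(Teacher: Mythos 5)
Your proof is correct and follows essentially the same route as the paper: substitute the explicit coefficient formula from Theorem~\ref{Fibonacci Poly Coeff}, split $\ln A_{i_n,j_n}$ into two binomial terms, apply a scaled Stirling/binomial-entropy estimate, and finish with the symmetry $H(p)=H(1-p)$. You are a bit more careful than the paper in two harmless respects, both worth keeping in mind: you record the $n\to n-1$ shift in the upper index (the paper silently writes $\binom{n-j_n}{n-i_n-j_n}$, which only costs $O(\ln n)$), and you state explicitly the rescaled version of Proposition~\ref{binom entropy} that is actually being invoked.
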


\begin{proof}
    Substituting $i_n = n \xi_n, j_n = n\eta_n$ into Eq.~\eqref{eqn:Fib Coeff} we have 
    \begin{equation*}
        A_{i_n, j_n} = \binom{n(1-\eta_n)}{n(1-\xi_n-\eta_n)} \binom{n(\xi_n+\eta_n)}{n\eta_n}
    \end{equation*}
and therefore
    \begin{equation*}
        \frac{1}{n} \ln A_{i_n, j_n} = \frac{1}{n} \ln \binom{n(1-\eta_n)}{n(1-\xi_n-\eta_n)} + \frac{1}{n} \ln \binom{n(\xi_n+\eta_n)}{n\eta_n}.
    \end{equation*}
Applying again formula (\ref{Stirling}) and using the fact that $\xi_n\to \xi, \eta_n\to \eta$ we have
    \begin{equation*}
        \lim_{n\to\infty} \frac{1}{n} \ln A_{i_n, j_n} = (1-\eta) H\left(\frac{\xi}{1-\eta}\right) + (\xi+\eta) H\left(\frac{\xi}{\xi+\eta}\right),
    \end{equation*}
    with $H(p) = -p \ln p - (1-p) \ln(1-p)=H(1-p).$
\end{proof}


\begin{theorem}
    The entropy function of Markov-Fibonacci polynomials
    \begin{equation}
    \label{fibent}
       \mathcal{F}(\xi, \eta): = (1- \eta) H\left(\frac{\xi}{1-\eta}\right) + (\xi+ \eta) H\left(\frac{ \xi}{\xi+ \eta}\right)
    \end{equation}
with $H$ given by (\ref{H}), is strictly concave on the region $\xi, \eta > 0, \xi+ \eta < 1$. It is invariant under the change $(\xi,\eta)\to (\xi, 1-\xi-\eta)$.
\end{theorem}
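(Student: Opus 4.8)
The plan is to establish concavity by rewriting $\mathcal{F}$ in a form that manifestly displays it as a sum (or composition) of concave functions. The key observation is that the entropy $H(p) = -p\ln p - (1-p)\ln(1-p)$ is intimately related to the function
\[
\Phi(a,b) := -a\ln a - b\ln b + (a+b)\ln(a+b),
\]
which is the standard \emph{relative entropy--type} function, and which is well known to be concave on the positive quadrant (it is the perspective/homogenisation of $-t\ln t$, or equivalently $(a+b)H(a/(a+b))$, and one checks concavity via the Hessian). Indeed one can write $(a+b)H\!\left(\frac{a}{a+b}\right) = \Phi(a,b)$. Applying this twice, first with $(a,b) = (\xi, 1-\xi-\eta)$ so that $a+b = 1-\eta$, and second with $(a,b) = (\xi,\eta)$ so that $a+b = \xi+\eta$, gives
\[
\mathcal{F}(\xi,\eta) = \Phi(\xi,\, 1-\xi-\eta) + \Phi(\xi,\eta).
\]
Both summands are compositions of the concave function $\Phi$ with affine maps $(\xi,\eta)\mapsto(\xi,1-\xi-\eta)$ and $(\xi,\eta)\mapsto(\xi,\eta)$, hence each is concave, and so is their sum. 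The invariance under $(\xi,\eta)\mapsto(\xi,1-\xi-\eta)$ is then immediate: this affine involution swaps the arguments of the first $\Phi$-term (which is symmetric, $\Phi(a,b)=\Phi(b,a)$) and sends $\xi+\eta \mapsto 1-\eta$, so $\Phi(\xi,\eta) \mapsto \Phi(\xi, 1-\xi-\eta)$ — i.e. it just interchanges the two summands.

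For strictness I would compute the Hessian of $\mathcal{F}$ directly. Writing $\mathcal{F}$ as above, the Hessian of $\Phi(a,b)$ is
\[
\Hess\Phi = \begin{pmatrix} -\frac1a + \frac1{a+b} & \frac1{a+b} \\[4pt] \frac1{a+b} & -\frac1b + \frac1{a+b}\end{pmatrix} = \frac{-1}{a+b}\begin{pmatrix} b/a & -1 \\ -1 & a/b\end{pmatrix},
\]
which is negative semidefinite (determinant $0$, trace $<0$) with one-dimensional kernel spanned by $(a,b)$. Pulling back along the two affine maps and summing, strict negative definiteness of $\Hess\mathcal{F}$ will hold provided the two kernel directions are not simultaneously annihilated — the first term's degenerate direction (in $(\xi,\eta)$-coordinates) is $(\xi, -\xi) \propto (1,-1)$ adjusted by the affine map, the second term's is $(\xi,\eta)$, and these are linearly independent precisely when $\xi,\eta>0$ and $\xi+\eta<1$, i.e. on the open region in question. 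I would carry this out explicitly: after the chain rule the resulting $2\times2$ matrix has entries that are rational functions of $\xi,\eta$, and I would verify negative definiteness by checking the $(1,1)$-entry is negative and the determinant is positive throughout the region.

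The steps in order: (i) prove the lemma that $\Phi(a,b) = (a+b)H(a/(a+b))$ is concave on $\{a,b>0\}$ with the stated Hessian; (ii) verify the algebraic identity $\mathcal{F}(\xi,\eta) = \Phi(\xi,1-\xi-\eta) + \Phi(\xi,\eta)$ by direct substitution into \eqref{fibent}; (iii) deduce concavity from (i) and composition with affine maps; (iv) establish strictness via the Hessian computation and the linear-independence-of-kernels argument; (v) read off the $(\xi,\eta)\mapsto(\xi,1-\xi-\eta)$ invariance from the symmetry $\Phi(a,b)=\Phi(b,a)$ and the fact that the involution swaps the two summands. I expect the main obstacle to be step (iv): concavity (non-strict) is essentially free from the sum-of-concave-compositions structure, but pinning down \emph{strict} concavity requires care because each $\Phi$-summand is only semidefinite, so one must genuinely track the two distinct degenerate directions and confirm they span $\mathbb{R}^2$ on the region — equivalently, confirm the boundary behaviour ($\xi\to0$, $\eta\to0$, $\xi+\eta\to1$) is exactly where degeneracy creeps in. A clean way to finish (iv) is to show the determinant of $\Hess\mathcal{F}$ equals a manifestly positive rational expression in $\xi,\eta,1-\xi-\eta$ on the open region.
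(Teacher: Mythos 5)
Your proof is correct but takes a genuinely different route from the paper. The paper's proof is a direct computation: it writes out the Hessian of $\mathcal{F}$ in $(\xi,\eta)$-coordinates entry by entry, observes $\mathcal{F}_{\xi\xi}<0$ by inspection, and simplifies the determinant to the explicit formula
\[
\det\Hess(\mathcal{F})=\frac{1}{\eta(\xi+\eta)(1-\eta)(1-\xi-\eta)},
\]
which is manifestly positive on the open region; the invariance is simply read off from \eqref{fibent}. Your approach instead factors $\mathcal{F}$ structurally as $\Phi(\xi,1-\xi-\eta)+\Phi(\xi,\eta)$ where $\Phi(a,b)=(a+b)H\bigl(a/(a+b)\bigr)$ is the perspective of $-t\ln t$ — the algebraic identity checks out, and it is a nice observation that both summands of \eqref{fibent} are the same concave function evaluated along two affine maps. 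Concavity is then free, and strictness follows from the rank-one degeneracy of each $\Hess\Phi$ term together with transversality of the two kernel lines: in $(\xi,\eta)$-coordinates the kernel of the second term is $\mathrm{span}\{(\xi,\eta)\}$ and the kernel of the first (after pulling back through $(\xi,\eta)\mapsto(\xi,1-\xi-\eta)$) is $\mathrm{span}\{(\xi,\eta-1)\}$, whose determinant is $-\xi\neq0$ on the region. (Your parenthetical "$(\xi,-\xi)\propto(1,-1)$ adjusted by the affine map" is imprecise — the correct pulled-back direction is $(\xi,\eta-1)$ — but the conclusion and the logic $v^T(A+B)v=0\Rightarrow v\in\ker A\cap\ker B$ for negative semidefinite $A,B$ are sound.) Your route buys a cleaner conceptual explanation and makes the $(\xi,\eta)\mapsto(\xi,1-\xi-\eta)$ invariance immediate from $\Phi(a,b)=\Phi(b,a)$; the paper's route buys the closed-form Hessian determinant, which is itself informative (it tells you exactly how the concavity degenerates at each boundary face of the Newton triangle).
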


\begin{proof}
  We will prove that the Hessian matrix
     \begin{equation*}
        \Hess(\mathcal{F}) = \begin{pmatrix} \mathcal{F}_{\xi \xi} & 
\mathcal{F}_{\xi \eta} \\
        \mathcal{F}_{\eta \xi } & \mathcal{F}_{\eta \eta} \end{pmatrix}
    \end{equation*}
    is negative definite. By Sylvester's criterion it is enough to show that
       $ \mathcal{F}_{\xi\xi} < 0$ and  $\det \Hess(\mathcal{F}) > 0.$
   Straightforward calculations lead to
    \begin{equation*}
        \Hess(\mathcal{F}) = \begin{pmatrix} \frac{1}{\xi+ \eta-1} - \frac{2}{\xi} + \frac{1}{\xi+ \eta} & \frac{1}{\xi+ \eta-1} + \frac{1}{\xi+ \eta} \\
        \frac{1}{\xi+ \eta-1} + \frac{1}{\xi+ \eta} & \frac{1}{\xi+ \eta-1} + \frac{1}{1- \eta} - \frac{1}{ \eta} + \frac{1}{\xi+ \eta} \end{pmatrix}.
    \end{equation*}
    Clearly $\mathcal{F}_{\xi\xi} < 0$ since $\frac{1}{\xi+ \eta-1} < 0$ and  $\frac{1}{\xi} > \frac{1}{\xi+ \eta} > 0$.
Computing the determinant and simplifying we find
    \begin{equation*}
        \det \Hess(\mathcal{F}) = \frac{1}{ \eta(\xi+ \eta)(1- \eta)(1-\xi- \eta)},
    \end{equation*}
    where all terms in the denominator are positive in the specified region and hence the determinant is positive as required. Therefore the function is strictly concave.
    
    The symmetry $(\xi,\eta)\to (\xi, 1-\xi-\eta)$ is evident from formula (\ref{fibent}).
\end{proof}

The graph of the corresponding function $\mathcal{F}(\xi, \eta)$ is shown in
Figure~\ref{fig:entropy}. 
The maximum value is $2 \ln \frac{1+\sqrt{5}}{2}$ achieved at $\xi=\frac{1}{\sqrt{5}}, \, \eta=\frac{5-\sqrt{5}}{10}.$

\begin{figure}[h]
  \begin{tikzpicture}
    \node[anchor=south west] at (0,0) 
         {\includegraphics[width=95mm]{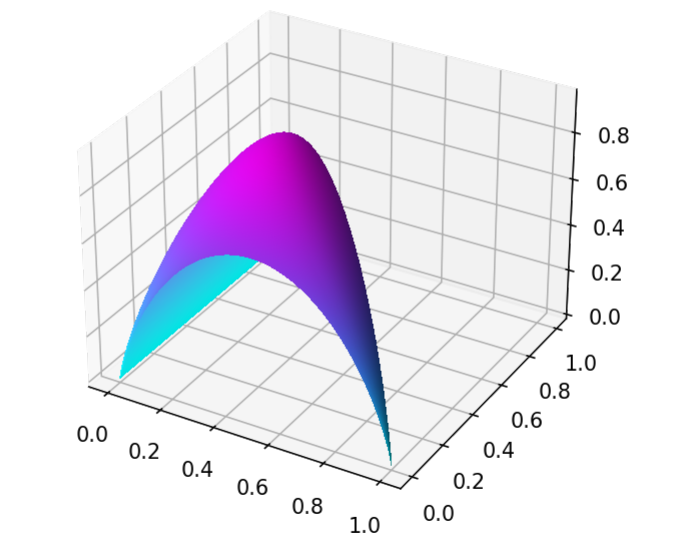}};
    \node at (2.75,0.9) {$\xi$};
    \node at (7.7,1.6) {$\eta$};
  \end{tikzpicture}
\caption{Graph of the entropy for Fibonacci polynomials.}
\label{fig:entropy}
\end{figure} 

\pagebreak
\section{Critical triangle and Markov sails}
\label{sec:Markov Sails}

We can split up the Newton polygon by drawing `critical lines' $i = a, j = b$ as shown in Figure \ref{fig:ctwNP29}. The set of lattice points with
$$
i<a, \; j<b, \; \frac{i}{a}+\frac{j}{b}>1
$$ we will refer to as the {\it critical triangle}. 

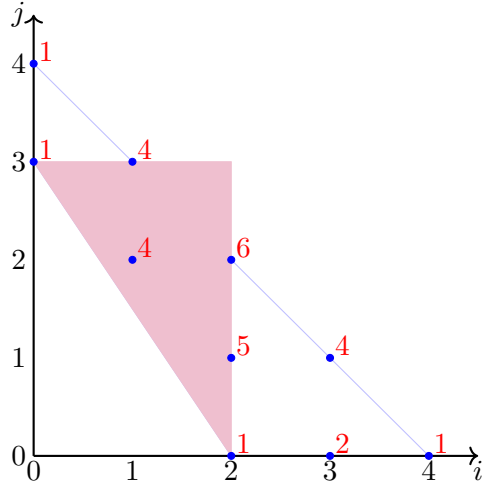
\begin{figure}[H]
    \begin{center}
    \begin{tikzpicture}  [scale=0.65]
            \draw[blue!25] (8, 0) -- (0, 8) -- (0, 6) -- (4, 0) -- (8, 0);
            \filldraw[purple!25] (4, 0) -- (4, 6) -- (0, 6) -- (4, 0);

            \draw[thick, ->] (0, 0) -- (0, 9);
            \draw[thick, ->] (0, 0) -- (9, 0);
            
            \filldraw [blue] (8, 0) circle (2pt);
            \node[red] at (8.25, 0.25) {1};
            \filldraw [blue] (6, 2) circle (2pt);
            \node[red] at (6.25, 2.25) {4};
            \filldraw [blue] (4, 4) circle (2pt);
            \node[red] at (4.25, 4.25) {6};
            \filldraw [blue] (2, 6) circle (2pt);
            \node[red] at (2.25, 6.25) {4};
            \filldraw [blue] (0, 8) circle (2pt);
            \node[red] at (0.25, 8.25) {1};

            \filldraw [blue] (6, 0) circle (2pt);
            \node[red] at (6.25, 0.25) {2};
            \filldraw [blue] (4, 2) circle (2pt);
            \node[red] at (4.25, 2.25) {5};
            \filldraw [blue] (2, 4) circle (2pt);
            \node[red] at (2.25, 4.25) {4};
            \filldraw [blue] (0, 6) circle (2pt);
            \node[red] at (0.25, 6.25) {1};
            
            \filldraw [blue] (4, 0) circle (2pt);
            \node[red] at (4.25, 0.25) {1};

            \node at (8, -0.3) {4};
            \node at (6, -0.3) {3};
            \node at (4, -0.3) {2};
            \node at (2, -0.3) {1};
            \node at (0, -0.3) {0};

            \node at (-0.3, 8) {4};
            \node at (-0.3, 6) {3};
            \node at (-0.3, 4) {2};
            \node at (-0.3, 2) {1};
            \node at (-0.3, 0) {0};

            \node at (-0.3, 9) {$j$};
            \node at (9, -0.3) {$i$};
    \end{tikzpicture}
    \end{center}
    \caption{Critical triangle of the weighted Newton polygon $\Delta_{29}$.
    Notice that per our definition, the lines $i=2$ and $j=3$ themselves are not
    considered as part of the critical triangle.} 
\label{fig:ctwNP29}
\end{figure}

In this section we will look at Markov coefficients inside the critical triangle in more detail. We begin with the following conjecture based on the analysis of experimental data.

\begin{conjecture} (Factor 4 Conjecture)
The coefficients of Markov polynomials strictly
within the critical triangle are divisible by 4.
\end{conjecture}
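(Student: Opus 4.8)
The plan is to prove the Factor 4 Conjecture by induction on the Conway topograph, using the Vieta recurrence \eqref{eq:P_evolution} and tracking congruences of all coefficients modulo $4$. Let me set up notation: for a Markov polynomial $M_{a/b}$ with numerator $P_{a/b}$, write $A_{ij}=A_{ij}(a/b)$ for the coefficient of $u^iv^jw^{a+b-1-i-j}$. I want to show $4\mid A_{ij}$ whenever $(i,j)$ lies strictly inside the critical triangle, i.e. $i<a$, $j<b$, and $\frac{i}{a}+\frac{j}{b}>1$. The key structural observation is that the critical triangle of $\Delta_{a/b}$ sits at the "bottom-left" of the Newton polygon, cut off from the rest by the lines $i=a$ and $j=b$, and that under the mediant step $\frac{a}{b},\frac{c}{d}\mapsto\frac{a+2c}{b+2d}$ the critical triangles of the three "parent" polygons $P_{c/d}$, $P_{(a+c)/(b+d)}$, $P_{a/b}$ map (after the Minkowski-type scaling in \eqref{eq:P_evolution}) into regions of the child's Newton polygon that either fall inside the child's critical triangle or land on its boundary lines $i=a+2c$, $j=b+2d$.

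First I would make the base cases explicit: from \eqref{eq:base_cases}, $P_{0/1}=1$, $P_{1/1}=u+v$, $P_{1/2}=(u+v)^2+uw$ have empty critical triangles (one checks $i<a$, $j<b$, $\frac ia+\frac jb>1$ has no integer solutions for these small $a,b$), so the statement holds vacuously; similarly for $P_{2/3}$ from the displayed example one checks directly that the only strictly interior critical point is $(1,1)$ with... actually here $A_{11}$ corresponds to $u^2vw$... wait, one must recompute against the indexing, but the point is the first genuinely nonempty case is verified by hand. Second, for the inductive step, I would substitute the inductive hypothesis into \eqref{eq:P_evolution}: writing $\Pi:=(u+v+w)P_{c/d}\,P_{(a+c)/(b+d)}$ and $\Theta:=u^cv^dw^{c+d}P_{a/b}$, so that $P_{(a+2c)/(b+2d)}=\Pi-\Theta$, I would analyze the two terms separately. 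The term $\Theta$ is easy: multiplying $P_{a/b}$ by $u^cv^dw^{c+d}$ shifts every exponent $(i,j)\mapsto(i+c,j+d)$, and one checks that a point strictly inside the critical triangle of $\Delta_{a+2c,b+2d}$ either pulls back under this shift to a point strictly inside the critical triangle of $\Delta_{a/b}$ (where the hypothesis gives divisibility by $4$) or pulls back to a point outside $\Delta_{a/b}$ entirely (coefficient $0$). The subtle inequalities here are $i-c<a \iff i<a+c$ versus the child constraint $i<a+2c$, and $\frac{i-c}{a}+\frac{j-d}{b}>1$; these need a short case analysis comparing the slopes of the bounding lines, which is where I'd be careful but expect no real difficulty since $c,d>0$.

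The genuinely hard term is $\Pi=(u+v+w)P_{c/d}P_{(a+c)/(b+d)}$, because a product of two polynomials mixes coefficients and there is no reason a single coefficient of the product should be divisible by $4$ just from divisibility of some coefficients of the factors. Here the plan is to prove a \emph{stronger} inductive statement that controls the coefficients of $P_{a/b}$ everywhere modulo $4$, not just inside the critical triangle — for instance, that $P_{a/b} \equiv (\text{an explicit polynomial involving the boundary binomial coefficients from Theorem~\ref{Coefficients}}) \pmod 4$, using that the boundary coefficients $A_{i,0},A_{0,j},A_{i,j}|_{i+j=a+b-1}$ are binomials and the near-boundary ones ($j=1$, second diagonal, etc.) are known sums of binomials. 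Concretely, I would conjecture and prove that modulo $2$, $P_{a/b}$ is congruent to $(u+v)^{a+b-1}$ plus contributions supported near the lines $i=a$, $j=b$ only — i.e. that all coefficients strictly inside the critical triangle are already \emph{even} (a mod-2 statement), proved by the same induction where now the product term $\Pi$ is handled because modulo $2$ we have $P_{c/d}\equiv(u+v)^{c+d-1}+(\text{critical-triangle terms of }P_{c/d})$ and the cross-terms land correctly. Then, to upgrade from "even" to "divisible by $4$", I would run a second, finer induction: assuming all strictly-interior critical coefficients of the parents are $\equiv 0\pmod 4$ and all their \emph{other} coefficients are known mod $4$ via the explicit boundary/near-boundary formulae, expand $\Pi-\Theta$ and check the surviving cross-terms contributing to a strictly-interior critical point of the child cancel in pairs modulo $4$. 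The main obstacle, and the part I'd expect to consume most of the work, is identifying the right strengthened hypothesis: a clean description of $P_{a/b} \bmod 4$ on a neighbourhood of the critical triangle's boundary (the lines $i=a$, $j=b$ and the mediant/slope line $\frac ia+\frac jb=1$) that is stable under \eqref{eq:P_evolution}, since without pinning those boundary residues the product term cannot be controlled.
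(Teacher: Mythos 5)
This statement is an open conjecture in the paper: the authors give no proof, only supporting evidence (the Pell-case computations in Sections~8, where the coefficient $4$ at $(1,n)$ and the arithmetic progression $4m$ along the sail are established for $\rho=n/(n+1)$, and numerical checks). So there is no proof of the paper's to compare yours against. Your proposal is, as you say yourself, a plan rather than a proof: the entire burden is shifted onto finding a strengthened inductive hypothesis describing $P_{a/b}\bmod 4$ near the boundary of the critical triangle, and you have not exhibited such a hypothesis, so the key step is missing.

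Beyond that acknowledged gap, the part you label ``easy'' is also not correct as stated. You claim that under $P_{(a+2c)/(b+2d)}=\Pi-\Theta$, the shift $(i,j)\mapsto(i+c,j+d)$ carries points strictly inside the child's critical triangle either to points strictly inside the parent's critical triangle or outside $\Delta_{a/b}$ entirely. This fails. Compute $\frac{a+c}{a+2c}+\frac{d}{b+2d}-1=\frac{ad-bc}{(a+2c)(b+2d)}$. When $ad-bc=1$ (which does occur on the topograph: e.g.\ $\rho_Z=2/3$, $\rho_X=1/2$, $\rho_Y=3/5$, $\rho_{Z'}=4/7$), the point $(a+c,d)$ is strictly inside the child's critical triangle, yet it shifts back to $(a,0)$, which lies on the boundary of $\Delta_{a/b}$ (indeed on the boundary of the parent's critical triangle, where $\frac{i}{a}+\frac{j}{b}=1$) with coefficient $A_{a,0}=\binom{b-1}{0}=1$, not divisible by $4$. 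Concretely, for $\rho_{Z'}=4/7$ the $\Theta$ term contributes $-1$ at $(3,2)$, which sits strictly inside the critical triangle of $\Delta_{4/7}$. So $\Pi$ and $\Theta$ cannot be controlled separately modulo $4$; the contribution of $\Theta$ at such points is forced to cancel against a contribution from $\Pi$, and any workable strengthened hypothesis must encode the residues of the boundary binomial coefficients of $P_{a/b}$ precisely so that this cancellation happens. Until that bookkeeping is carried out, the argument does not close.
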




We are going to be more specific about the coefficients on the lower boundary of the critical triangle, using the following geometric interpretation of continued fractions proposed by Felix Klein.

 Take the region $[0, a] \times [0, b]$ for some positive integers $a, b$ and divide it into two parts with the line $y = \frac{b}{a} x$. Then consider the convex hull of integer points in each of the two parts, the boundary either side of the line $y = \frac{b}{a} x$ will be a broken line that is called, following Arnold, the {\it sail} (see \cite{Arnold,Kar}). More precisely,
\begin{itemize}
    \item The vertices on the sail containing the point $(1, 0)$ are defined to be $A_0, A_1, \dots, A_n$ (where $A_0 = (1, 0)$). This is the lower sail (i.e., below the line).
    \item The vertices on the sail containing the point $(0, 1)$ are defined to be $B_0, B_1, \dots, B_n$ (where $B_0 = (0, 1)$). This is the upper sail (i.e., above the line).
\end{itemize}
One can prove that these vertices on the sails can be found as the convergents of the continued fraction for $\frac{b}{a}$ (see for example \cite{Kar}).

In particular, we have the following procedure to find the Klein diagram. Plot points $(q_k, p_k)$ to represent the continuants $C_k$ alongside the line $y = \frac{b}{a} x$, then connect with straight lines the the points $(q_k, p_k)$ and $(q_{k+2}, p_{k+2})$. Note that the points representing even-indexed continuants will all lie above the line $y = \frac{b}{a} x$ whilst the odd-indexed ones will lie below.

\begin{example}
\label{53sail}
    Consider the rational $\frac{5}{3} = [1, 1, 2]$, its continuants are given in the following table.
    \begin{center}
    \begin{tabular}{|c|c|c|c|c|c|}
        \hline
        & & & 1 & 1 & 2 \\
        \hline
        $p_k$ & 0 & 1 & 1 & 2 & 5 \\
        $q_k$ & 1 & 0 & 1 & 1 & 3 \\
        \hline
    \end{tabular}
    \end{center}
    ($C_{-1} = \frac{0}{1}, C_0 = \frac{1}{0}, C_1 = \frac{1}{1}, C_2 = \frac{2}{1}, C_3 = \frac{5}{3}$).
    
    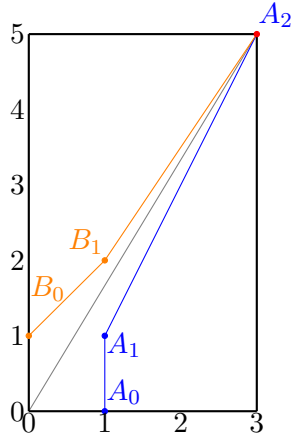
\begin{figure}[H]
    \begin{center}
    \begin{tikzpicture}  [scale=0.5]
            \draw[black!50] (0, 0) -- (6, 10);
    
            \draw[thick] (0, 0) -- (0, 10);
            \draw[thick] (0, 0) -- (6, 0);
            \draw[thick] (6, 0) -- (6, 10);
            \draw[thick] (0, 10) -- (6, 10);
            
            \filldraw[blue] (2, 0) circle (2pt);
            \filldraw[blue] (2, 2) circle (2pt);
            \draw[blue] (2, 0) -- (2, 2);
            \draw[blue] (2, 2) -- (6, 10);
            
            \filldraw[orange] (0, 2) circle (2pt);
            \filldraw[orange] (2, 4) circle (2pt);
            \draw[orange] (0, 2) -- (2, 4);
            \draw[orange] (2, 4) -- (6, 10);
            
            \filldraw[red] (6, 10) circle (2pt);

            \node[blue] at (2.5, 0.5) {$A_0$};
            \node[blue] at (2.5, 1.75) {$A_1$};
            \node[blue] at (6.5, 10.5) {$A_2$};

            \node[orange] at (0.5, 3.25) {$B_0$};
            \node[orange] at (1.5, 4.5) {$B_1$};
    
            \node at (6, -0.3) {3};
            \node at (4, -0.3) {2};
            \node at (2, -0.3) {1};
            \node at (0, -0.3) {0};
    
            \node at (-0.3, 10) {5};
            \node at (-0.3, 8) {4};
            \node at (-0.3, 6) {3};
            \node at (-0.3, 4) {2};
            \node at (-0.3, 2) {1};
            \node at (-0.3, 0) {0};
    \end{tikzpicture}
    \end{center}
    \caption{Klein Diagram for the rational $\frac{5}{3}$}
    \label{fig:Klein53}
    \end{figure}
\end{example}

More formally we can define $A_i$'s and $B_i$'s as follows,
\begin{equation}
\label{sail points}
    A_i = (q_{2i-1}, p_{2i-1}), \qquad B_i = (q_{2i}, p_{2i}),
\end{equation}
where $\frac{p_k}{q_k} = [a_1, a_2, \dots, a_k]$ is the $k$th convergent. Note that for rationals we always consider the final continuant (i.e., the point $(a, b)$) to be part of both sails.

There may be more integer points lying on the sail, indeed in the Example~\ref{53sail} the point $(2, 3)$ lies on the line between $A_1$ and $A_2$. These additional integer points can actually be found immediately if we instead look at the convergents of the Hirzebruch (negative) continued fraction instead, as it was shown by Ustinov in \cite{Ust}.

From the theory of continued fractions, we know
\begin{equation*}
    p_k = a_{k} p_{k-1} + p_{k-2}, \qquad q_k = a_{k} q_{k-1} + p_{k-2}.
\end{equation*}
This means that the line joining the second-last convergent's coordinates $(p_{k-2}, q_{k-2})$ to the corner will have gradient equal to the penultimate convergent. This means that if we flip one of the sails, so that they are both on the same side of the diagonal, they will intersect at the point corresponding to the final convergent. In this way we can combine the sails onto one side. More explicitly, if we consider the coordinate system of the lower sail (blue) to have origin at $(a, b)$ and directions reversed from normal then we can connect the sails. In practise, continuing on from Example~\ref{53sail} this is shown 
on the left in Figure~\ref{fig:Combined53}.

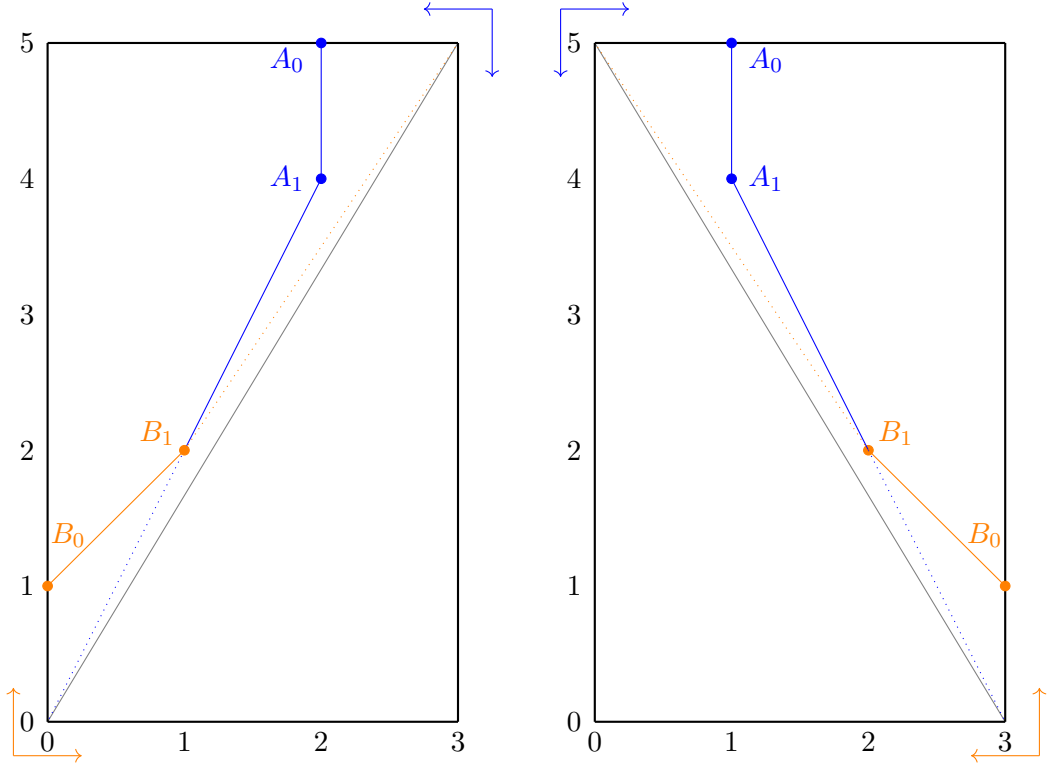
\begin{figure}[H]
    \begin{center}
     \begin{tikzpicture}  [scale=0.9]
            \draw[black!50] (0, 0) -- (6, 10);
    
            \draw[thick] (0, 0) -- (0, 10);
            \draw[thick] (0, 0) -- (6, 0);
            \draw[thick] (6, 0) -- (6, 10);
            \draw[thick] (0, 10) -- (6, 10);

            \draw[blue, ->] (6.5, 10.5) -- (6.5, 9.5);
            \draw[blue, ->] (6.5, 10.5) -- (5.5, 10.5);
            \draw[orange, ->] (-0.5, -0.5) -- (0.5, -0.5);
            \draw[orange, ->] (-0.5, -0.5) -- (-0.5, 0.5);
            
            \filldraw[blue] (4, 10) circle (2pt);
            \filldraw[blue] (4, 8) circle (2pt);
            \draw[blue] (4, 10) -- (4, 8);
            \draw[blue] (4, 8) -- (2, 4);
            \draw[blue, dotted] (2, 4) -- (0, 0);
            
            \filldraw[orange] (0, 2) circle (2pt);
            \filldraw[orange] (2, 4) circle (2pt);
            \draw[orange] (0, 2) -- (2, 4);
            \draw[orange, dotted] (2, 4) -- (6, 10);

            \node[blue] at (3.5, 9.75) {$A_0$};
            \node[blue] at (3.5, 8) {$A_1$};

            \node[orange] at (0.3, 2.75) {$B_0$};
            \node[orange] at (1.6, 4.25) {$B_1$};
    
            \node at (6, -0.3) {3};
            \node at (4, -0.3) {2};
            \node at (2, -0.3) {1};
            \node at (0, -0.3) {0};
    
            \node at (-0.3, 10) {5};
            \node at (-0.3, 8) {4};
            \node at (-0.3, 6) {3};
            \node at (-0.3, 4) {2};
            \node at (-0.3, 2) {1};
            \node at (-0.3, 0) {0};
            \begin{scope}[xshift=8cm]
            \draw[black!50] (6, 0) -- (0, 10);

            \draw[thick] (0, 0) -- (0, 10);
            \draw[thick] (0, 0) -- (6, 0);
            \draw[thick] (6, 0) -- (6, 10);
            \draw[thick] (0, 10) -- (6, 10);

            \draw[blue, ->] (-0.5, 10.5) -- (-0.5, 9.5);
            \draw[blue, ->] (-0.5, 10.5) -- (0.5, 10.5);
            \draw[orange, ->] (6.5, -0.5) -- (5.5, -0.5);
            \draw[orange, ->] (6.5, -0.5) -- (6.5, 0.5);
            
            \filldraw[orange] (6, 2) circle (2pt);
            \node[orange] at (5.7, 2.75) {$B_0$};
            \filldraw[orange] (4, 4) circle (2pt);
            \node[orange] at (4.4, 4.25) {$B_1$};
            \filldraw[blue] (2, 8) circle (2pt);
            \node[blue] at (2.5, 8) {$A_1$};
            \filldraw[blue] (2, 10) circle (2pt);
            \node[blue] at (2.5, 9.75) {$A_0$};

            \draw[blue] (2, 10) -- (2, 8);
            \draw[blue] (2, 8) -- (4, 4);
            \draw[orange] (6, 2) -- (4, 4);            

            \draw[blue, dotted] (4, 4) -- (6, 0);
            \draw[orange, dotted] (4, 4) -- (0, 10);

            \node at (6, -0.3) {3};
            \node at (4, -0.3) {2};
            \node at (2, -0.3) {1};
            \node at (0, -0.3) {0};

            \node at (-0.3, 10) {5};
            \node at (-0.3, 8) {4};
            \node at (-0.3, 6) {3};
            \node at (-0.3, 4) {2};
            \node at (-0.3, 2) {1};
            \node at (-0.3, 0) {0};
            \end{scope}
    \end{tikzpicture}
    \end{center}
    \caption{(Left) Combined sail for the rational $\frac{5}{3}$;
(Right) the reflection in the line $x=a/2$.}
    \label{fig:Combined53}
\end{figure}

This is precisely the convex hull of the integer lattice above the line $y = \frac{b}{a} x$ (with endpoints $(0, 0), (3, 5)$ omitted).

From Eq.~\eqref{eqn:Newton Poly} it is clear to see that the critical triangle of the Newton polygon consists of a region $[0, a] \times [0, b]$ split \textit{via} a central diagonal, which is comparable to that of the Klein diagram. Furthermore, we are again only interested in integral points.

Indeed, reflecting the leftmost picture in Figure~\ref{fig:Combined53} in the line $x = \frac{a}{2}$ we obtain the critical triangle, where coordinate frames have been shown in their corresponding colours.

The integer points on this combined sail all refer to a monomial in the Markov polynomial and as such they each have their own coefficient. We will refer to this combined sail, together with the coefficients at each point as the \textit{Markov sail}.

With our change of coordinate frames, we reformulate the definition of points from Eq.~\eqref{sail points} to
\begin{equation*}
\label{scaled sail points}
    \widetilde{A_i} = (q_{2i-1}, b-p_{2i-1}), \qquad \widetilde{B_i} = (a - q_{2i}, p_{2i}),
\end{equation*}
but going forward we will omit the tilde's for simplicity.

\subsection{Markov Sail Duality}
There is an important {\it Edge-Angle duality} between the two sails, saying that the length of a unbroken line on one sail is equal to the `index' at a vertex/rational angle on the opposite sail, see Karpenkov \cite{Kar}. 

Here the {\it integer length} (denoted $\text{l}\ell(AB)$) of a line segment $AB$ is defined to be the number of integer points on the its interior plus $1$, and the {\it index} (denoted $\text{l}\alpha(\angle{BAC})$) of a rational angle $\angle{BAC}$ is defined to be the index of the sublattice generated by the integer vectors of the lines $AB$ and $AC$ in the integer lattice, see \cite{Kar}.

Formally, Karpenkov stated this duality as follows
\begin{align*}
    \text{l}\alpha(\angle{A_i A_{i+1} A_{i+2}}) = \text{l}\ell(B_i B_{i+1}) &= a_{2i+2}, \\ 
    \text{l}\alpha(\angle{B_i B_{i+1} B_{i+2}}) = \text{l}\ell(A_{i+1} A_{i+2}) &= a_{2i+3},
\end{align*}
where $a_i$'s are the partial quotients of the continued fraction $\frac{b}{a} = [a_1, a_2, \dots]$.

We conjecture that in our case (for the so called Markov sails) we have a sort of duality between the `weights' of these vertices/lines on the sail. Denote $\mathcal{M}(C)$ to be the coefficient of the monomial represented at the point $C$ on the Newton polygon, we will refer to these as \textit{$\mathcal{M}$-values} and define this only for integer points in the interior of the critical triangle (i.e., not on the boundary).

It appears as though, on unbroken lines of the sail, the coefficients differ according to an arithmetic progression. Denote $d(C_i C_{i+1})$ to be the common difference of $\mathcal{M}$-values of integer points along an unbroken line segment of the Markov sail between vertices $C_i$ and $C_{i+1}$. We conjecture the following extended duality.

\begin{conjecture} (Markov Sail Duality) The coefficients of Markov polynomials satisfy Markov sail duality
\label{extended duality}
    \begin{align*}
        d(B_i B_{i+1}) &= -\mathcal{M}(A_{i+1}) \\
        d(A_{i+1} A_{i+2}) &= -\mathcal{M}(B_{i+1})
    \end{align*}
\end{conjecture}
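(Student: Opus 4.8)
The conjecture bundles two assertions — that the $\mathcal{M}$-values along each unbroken edge of the Markov sail form an arithmetic progression, and that the common difference along a $B$-edge (resp.\ an $A$-edge) equals minus the $\mathcal{M}$-value at the adjacent $A$-vertex (resp.\ $B$-vertex) — and both have to be carried through the induction together. I would prove the combined statement by induction on $a+b$ along the Conway topograph, using the numerator recurrence \eqref{eq:P_evolution}: in the notation of Figure~\ref{fig:Assume}, $P_{\rho_{Z'}}=(u+v+w)\,P_{\rho_X}P_{\rho_Y}-u^{c}v^{d}w^{c+d}P_{\rho_Z}$, which writes every coefficient of $M_{\rho_{Z'}}$ as an explicit convolution of coefficients of $M_{\rho_X}$, $M_{\rho_Y}$ and a monomial shift of those of $M_{\rho_Z}$. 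A convenient reformulation of the inductive hypothesis — verifiable on small examples such as $\rho=3/4$ and $\rho=3/5$ — is that the progression along an edge, extrapolated one lattice step past its far vertex, lands on a line $i=a$ or $j=b$ bounding the critical triangle with value $0$; this ``vanishing-at-the-boundary'' form already encodes the duality, since it pins the common difference to minus the value at the last interior point.

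\textbf{Dictionary.} The first technical ingredient is to make the correspondence of Section~\ref{sec:Markov Sails} between sail data and lattice points fully explicit: the vertices are $\widetilde A_i=(q_{2i-1},\,b-p_{2i-1})$, $\widetilde B_i=(a-q_{2i},\,p_{2i})$; the integer points on an edge are obtained by repeatedly adding its primitive vector, read off from the convergents of $b/a=[a_1,a_2,\dots]$; and by Karpenkov's Edge--Angle duality quoted above the edge $A_iA_{i+1}$ (resp.\ $B_iB_{i+1}$) has $a_{2i+1}+1$ (resp.\ $a_{2i+2}+1$) lattice points. One then checks that the vertices with index $\geq1$ lie strictly inside the critical triangle, using inequalities of the form $b/a>[a_1;\dots,a_k]$, so that the relevant $\mathcal{M}$-values are defined, and records which endpoint of each edge lies on $i=a$ or on $j=b$.

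\textbf{Base cases and the two inductive moves.} For $a=1$ (the Markov--Fibonacci polynomials $M_{1/n}$) the critical triangle is empty and the statement is vacuous. The first genuinely non-trivial rationals are those with $b/a$ of continued-fraction length $2$ — in particular the Markov--Pell family $M_{n/(n+1)}$ and the family $M_{2/(2n-1)}$ — where the statement reduces to a single duality identity that can be read from the explicit formulas of Theorems~\ref{Fibonacci Poly Coeff} and~\ref{Coefficients} together with the coefficient recurrence \eqref{Pell Coeff Rec} (or the Binet formula \eqref{Binet}); a handful of further small rationals should be checked by hand. For the inductive step, a single move $\rho_Z\to\rho_{Z'}$ on the topograph either enlarges the last partial quotient of $b/a$ — so the last edge of the Markov sail gains one new lattice point at its far end — or opens a new partial quotient — so a new vertex together with a new edge appears. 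In the first case one evaluates the convolution from \eqref{eq:P_evolution} at the coordinates of the new point and shows it continues the progression with the \emph{same} common difference while the opposite-sail vertices are unchanged; in the second case one computes the initial common difference of the new edge and identifies it, through the same convolution, with minus the $\mathcal{M}$-value at the freshly born opposite vertex. Karpenkov's duality keeps the index bookkeeping consistent with the statement.

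\textbf{Main obstacle.} The crux is this inductive step: translating one move on the Conway topograph into a controlled move on the Klein sail and then \emph{evaluating} the two terms of \eqref{eq:P_evolution} exactly at the newly added lattice point. The monomial $u^{c}v^{d}w^{c+d}P_{\rho_Z}$ and the product $P_{\rho_X}P_{\rho_Y}$ have different supports, and a sail point of $\Delta_{\rho_{Z'}}$ sits on the lower boundary of the critical triangle, where the two contributions nearly cancel; keeping track of exactly which monomials of the factors reach that point, and proving that what survives is affine along the edge and vanishes one step beyond it, is where essentially all of the work lies. I expect that precisely this control — in effect a manageable closed form, compatible with \eqref{eq:P_evolution}, for the restriction of $P_\rho$ to the lines carrying the sail edges — is what is still missing, which is why the statement is recorded here only as a conjecture.
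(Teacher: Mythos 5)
This statement is Conjecture~\ref{extended duality}, and the paper offers no proof of it: the authors only present numerical evidence (the case $b/a=18/13$ in Example~\ref{ex:18/13}) and establish the conjecture's consequences for the Markov--Pell family $M_{n/(n+1)}$ in Section~9 (where the sail has a single short edge, with coefficients $4,8,\dots,4(n-1)$ obeying the predicted arithmetic progression). You are therefore right to treat this as an open statement, and your submission is a strategy sketch rather than a proof; there is no proof in the paper against which to compare it.

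As a strategy sketch, yours is sensible and aligned with the tools the paper actually uses. Induction along the Conway topograph driven by the numerator recurrence~\eqref{eq:P_evolution} is exactly how the authors establish the Pell special case (via the scalar form~\eqref{Pell Coeff Rec}). Your decomposition of a topograph move into either extending the last partial quotient of $b/a$ or opening a new one, and the corresponding dichotomy of sail moves (add a lattice point to the last edge vs.\ create a new vertex and edge) is the right combinatorial bookkeeping, and Karpenkov's Edge--Angle duality does supply the edge lengths needed to track it. The reformulation of the duality as ``the progression extrapolated one step past the far vertex vanishes on the boundary lines $i=a$ or $j=b$'' is a nice normalization, although you should check it is genuinely equivalent to the paper's statement on the first edge $A_0A_1$, where $A_0$ lies on the boundary and $\mathcal M$ is not defined. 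Your identification of the obstruction --- controlling exactly which monomials of $P_{\rho_X}P_{\rho_Y}$ and of the shifted $u^cv^dw^{c+d}P_{\rho_Z}$ survive at a sail point, near where the two contributions cancel --- is exactly where a complete argument would have to do work, and the paper indeed stops short of this. In short: you have not proved the conjecture (and correctly say so), but you have correctly located both the relevant machinery and the bottleneck, consistent with its status as an open problem.
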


This conjecture will provide us with a method for determining almost all $\mathcal{M}$-values on the sail if we having some starting value of which to work from. This is because we known how many integer points lie on each unbroken line from the duality, Corollary 3.1 in \cite{Kar}, so can compute $\mathcal{M}$-values using the equations from this conjecture from some initial condition.

With regard to this initial starting value, we claim that the $\mathcal{M}$-value corresponding to the point of the penultimate continuant is always $4$. 

\begin{conjecture}(Location of 4)
\label{four}
    Consider the continued fraction $\frac{b}{a} = [a_1, a_2, \dots, a_n]$. If $n = 2m+1$ (odd) then $\mathcal{M}(B_m) = 4$. If $n = 2m$ (even) then $\mathcal{M}(A_m) = 4$.
\end{conjecture}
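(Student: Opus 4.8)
\emph{Proof strategy.}
The first move is to pin down the point concretely. Writing $\frac{p_k}{q_k}=[a_1,\dots,a_k]$ for the convergents of $\frac{b}{a}=[a_1,\dots,a_n]$ and combining the classical identity $|p_{n-1}a-q_{n-1}b|=1$ with the change of coordinates of Section~\ref{sec:Markov Sails}, one checks that in both parity cases the distinguished sail vertex $P^{\ast}=(i^{\ast},j^{\ast})$ is characterised by
\[
  b\,i^{\ast}+a\,j^{\ast}=ab+1,\qquad 0<i^{\ast}<a,\quad 0<j^{\ast}<b.
\]
Since $\gcd(a,b)=1$, this is the \emph{unique} lattice point of the critical triangle on the line $bi+aj=ab+1$, i.e.\ on the first lattice line strictly above the hypotenuse $\frac{i}{a}+\frac{j}{b}=1$ of the Newton polygon \eqref{eqn:Newton Poly}. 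So Conjecture~\ref{four} is equivalent to: the coefficient $A_{i^{\ast},j^{\ast}}$ of $M_{a/b}$ equals $4$.

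I would prove this by induction on $a+b$ along the Conway topograph. The base cases are the configurations with empty critical triangle (nothing to prove); the minimal non-trivial case $\rho=2/3$ (where $A_{1,2}=4$ by direct computation); and the explicit families already understood --- the Markov--Fibonacci polynomials $M_{1/n}$ (Theorem~\ref{Fibonacci Poly Coeff}) and the Pell-type families $M_{n/(n+1)}$ and $M_{2/(2n-1)}$ (Corollary~\ref{Pell Saturation} and Theorem~\ref{Fibonacci Critical}). The remaining ``short'' continued fractions $[a_1,a_2]$ with $a_1,a_2\ge 2$ are covered by an auxiliary induction on $a_2$: since $[a_1,a_2+1]$ is the topograph mediant of $[a_1,a_2]$ and $[a_1]$, and the $[a_1]$-polynomials are the explicit Fibonacci ones, the inductive step below applies there too with a known anchor.

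For the inductive step, with the notation of Figure~\ref{fig:Assume} write $\rho'=\rho_{Z'}=\frac{a+2c}{b+2d}$ and expand \eqref{eq:P_evolution},
\[
  P_{\rho'} = (u+v+w)\,P_{\rho_X}P_{\rho_Y} - u^{c}v^{d}w^{c+d}P_{\rho_Z},
\]
at the monomial of $P^{\ast}_{\rho'}$. The structural input is that every $\Delta_\rho$ has an extremely thin lower layer: its hypotenuse $bi+aj=ab$ carries only the vertices $u^{a}w^{b-1}$ and $v^{b}w^{a-1}$, each with coefficient $1$ (Theorem~\ref{Coefficients}), and the next parallel line carries only $P^{\ast}_\rho$. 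Because $P^{\ast}_{\rho'}$ lies on the first line above the hypotenuse of $\Delta_{\rho'}$ --- in particular it has near-maximal $w$-degree --- almost every product on the right-hand side overshoots (its support lies strictly higher), leaving only a bounded number of terms. The examples $\rho=3/5$ and $\rho=4/7$ show the mechanism: the surviving contributions are a product of hypotenuse vertices of $P_{\rho_X}$ and $P_{\rho_Y}$ (giving $1\cdot1$), the product of the other hypotenuse vertex of $P_{\rho_X}$ with $P^{\ast}_{\rho_Y}$ (giving $1\cdot 4$ by the induction hypothesis --- here one uses that the more developed partner $\rho_Y=\frac{a+c}{b+d}$ has nonempty critical triangle, the exceptions $\rho_Y=\frac1q$ forcing $\rho'$ into the base families), and a single subtracted vertex term from $u^{c}v^{d}w^{c+d}P_{\rho_Z}$; together $1+4-1=4$.

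The hard part is precisely the claim that \emph{only} these terms survive, with the stated coefficients: one must determine exactly which pairs of lattice points of $\Delta_{\rho_X}$ and $\Delta_{\rho_Y}$ can combine to land on --- rather than above --- the line $bi+aj=ab+1$ of $\Delta_{\rho'}$, and verify that $P^{\ast}_{\rho'}$ ``descends from'' $P^{\ast}_{\rho_Y}$ in the needed sense. I expect that pushing this through requires strengthening the inductive hypothesis to control a whole strip of coefficients adjacent to the hypotenuse --- equivalently, the lower portion of the Markov sail of Section~\ref{sec:Markov Sails} --- and deducing the value at $P^{\ast}$ only at the end; constructing that strengthened invariant, and handling the ``short'' continued fractions uniformly, is where the real work lies. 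The Saturation statements for the Fibonacci and Pell families (Corollaries~\ref{Fibonacci Saturation}, \ref{Pell Saturation}) are what anchor the recursion and exclude accidental cancellations.
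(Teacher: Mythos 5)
This statement is stated in the paper as a \emph{conjecture}, not a theorem, so there is no proof of it in the paper to compare against. The paper establishes it only in the special Pell family $\rho=n/(n+1)$ (Theorem~\ref{Pell 4 Location}), where the coefficient recursion~\eqref{Pell Coeff Rec} collapses to $A_{1,n}^{(2n+1)} = A_{1,n-1}^{(2n-1)}$, and the base case $\rho=2/3$ gives $4$. Your proposal is therefore not ``a different proof of the same theorem''; it is a plan of attack on an open problem, and you say so yourself. That honesty is to your credit.

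What you have that is genuinely useful: your identification of the distinguished point $P^\ast$ as the unique lattice point on $bi+aj=ab+1$ inside the critical triangle is correct (it follows from $|a p_{n-1}-b q_{n-1}|=1$ and the change of coordinates in Section~\ref{sec:Markov Sails}), and it unifies the odd/even cases of the conjecture into one clean target. Your observation that the hypotenuse of every $\Delta_\rho$ carries exactly two lattice points $(a,0)$, $(0,b)$ both with coefficient $1$, and that the next parallel lattice line carries only $P^\ast_\rho$, is also correct and is the right structural input. However, the step you flag as ``the hard part'' is indeed the entire content of a proof: to make the $1+4-1=4$ bookkeeping work you must show that no other pair of lattice points from $\Delta_{\rho_X}\times\Delta_{\rho_Y}$, multiplied by $u$, $v$ or $w$, lands on the line $b'i+a'j=a'b'+1$ (with $a'=a+2c$, $b'=b+2d$), and you must prove that $P^\ast_{\rho_Y}$ actually ``evolves into'' $P^\ast_{\rho'}$ in the correct coordinates under the Farey move. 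Neither is established, and the latter is especially delicate since the two topograph moves available at each vertex send $P^\ast$ to different parts of the next sail. You also rely implicitly on saturation (nonvanishing of interior coefficients), which the paper only proves in the Fibonacci and Pell families (Corollaries~\ref{Fibonacci Saturation}, \ref{Pell Saturation}). As a strategy your sketch is plausible and in the same spirit as the paper's Pell computation, but as a proof of Conjecture~\ref{four} it leaves the decisive steps open.
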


From this value we would then be able to work backwards (alternating between each side of the sail) to recover the $\mathcal{M}$-values on most of the sail (all but on line $A_0 A_1$ if there are any integer points on its interior). To see this, consider the following example.

\begin{example}
\label{ex:18/13}
$\frac{b}{a} = \frac{18}{13} = [1, 2, 1, 1, 2]$
\begin{center}
    \begin{tabular}{|c|c|c|c|c|c|c|c|}
        \hline
        & & & 1 & 2 & 1 & 1 & 2 \\
        \hline
        $p_k$ & 0 & 1 & 1 & 3 & 4 & 7 & 18 \\
        $q_k$ & 1 & 0 & 1 & 2 & 3 & 5 & 13 \\
        \hline
    \end{tabular}
    \end{center}
According to Eq.~\eqref{scaled sail points} we therefore have $A_0 = (1, 18), A_1 = (1, 17), A_2 = (3, 14)$ and $B_0 = (13, 1), B_1 = (11, 3), B_2 = (8, 7)$.

\begin{figure}[H]
    \begin{center}
    \begin{tikzpicture}[scale=0.6]
            \draw[black!50] (13, 0) -- (0, 18);

            \draw[thick] (0, 0) -- (0, 18);
            \draw[thick] (0, 0) -- (13, 0);
            \draw[thick] (13, 0) -- (13, 18);
            \draw[thick] (0, 18) -- (13, 18);

            \draw[blue, ->] (-1.5, 18.5) -- (-1.5, 17.5);
            \draw[blue, ->] (-1.5, 18.5) -- (-0.5, 18.5);
            \draw[orange, ->] (14.5, -0.5) -- (13.5, -0.5);
            \draw[orange, ->] (14.5, -0.5) -- (14.5, 0.5);
            
            \filldraw[orange] (13, 1) circle (2pt);
            \node[orange] at (13.5, 1) {$B_0$};
            \filldraw[orange] (11, 3) circle (2pt);
            \node[orange] at (11.5, 3.5) {$B_1$};
            \filldraw[orange] (8, 7) circle (2pt);
            \node[orange] at (8.5, 7.5) {$B_2$};
            
            \filldraw[blue] (1, 18) circle (2pt);
            \node[blue] at (1, 18.5) {$A_0$};
            \filldraw [blue] (1, 17) circle (2pt);
            \node[blue] at (1.75, 17) {$A_1$};
            \filldraw [blue] (3, 14) circle (2pt);
            \node[blue] at (3.5, 14.5) {$A_2$};

            \draw[orange] (13, 1) -- (11, 3);
            \draw[orange] (11, 3) -- (8, 7);
            \draw[blue] (1, 18) -- (1, 17);
            \draw[blue] (1, 17) -- (3, 14);
            \draw[blue] (3, 14) -- (8, 7);

            \node at (13, -0.3) {13};
            \node at (12, -0.3) {12};
            \node at (11, -0.3) {11};
            \node at (8, -0.3) {8};
            \node at (3, -0.3) {3};
            \node at (1, -0.3) {1};
            \node at (0, -0.3) {0};

            \node at (-0.3, 18) {18};
            \node at (-0.3, 17) {17};
            \node at (-0.3, 14) {14};
            \node at (-0.3, 7) {7};
            \node at (-0.3, 3) {3};
            \node at (-0.3, 2) {2};
            \node at (-0.3, 1) {1};
            \node at (-0.3, 0) {0};
    \end{tikzpicture}
    \caption{Markov sail for Example~\ref{ex:18/13}.}
    \end{center}
\end{figure}
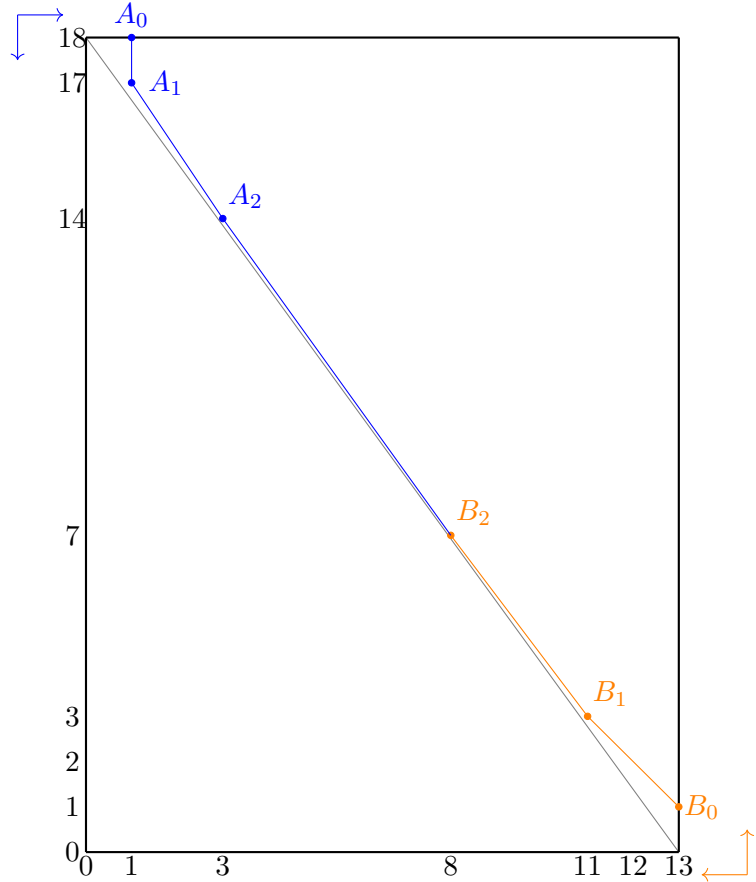
\end{example}

According to Conjecture~\ref{four}, since the continued fraction has odd length ($n = 5, m = 2$) we should have $ m(B_2) = 4$. Now using Conjecture~\ref{extended duality}, we should have
\begin{align*}
    d(A_2 A_3) &= -\mathcal{M}(B_2) \\
    d(B_1 B_2) &= -\mathcal{M}(A_2) \\
    d(A_1 A_2) &= -\mathcal{M}(B_1) \\
    d(B_0 B_1) &= -\mathcal{M}(A_1).
\end{align*}
Note the integer lengths
\begin{align*}
    \text{l}\ell(A_2 A_3) &= a_5 = 2 \\
    \text{l}\ell(B_1 B_2) &= a_4 = 1, \\
    \text{l}\ell(A_1 A_2) &= a_3 = 1, \\
    \text{l}\ell(B_0 B_1) &= a_2 = 2.
\end{align*}
Noting that $B_m$ is the penultimate integer point before $A_{m+1}$ on the line $A_m A_{m+1}$ for $n$ odd (this will always be the case), we have
\begin{align*}
    \mathcal{M}(A_2) &= \mathcal{M}(B_2) + (a_5 - 1) \mathcal{M}(B_2) = 8, \\
    \mathcal{M}(B_1) &= \mathcal{M}(B_2) + a_4 \mathcal{M}(A_2) = 12, \\
    \mathcal{M}(A_1) &= \mathcal{M}(A_2) + a_3 \mathcal{M}(B_1) = 20.
\end{align*}
Since $B_0$ is not in the interior of the critical triangle we cannot apply to conjecture to find its $\mathcal{M}$-value. However, since the integer length of $B_0 B_1$ is $2$ there will be an integer point on the interior of this line (which will lie in the interior of the critical triangle). This interior point however can be found from the conjecture and should have $\mathcal{M}$-value equal to $$\mathcal{M}(B_1) + \mathcal{M}(A_1) = 32.$$

Checking numerically, all of the $\mathcal{M}$-values calculated in this example agree with the actual $\mathcal{M}$-values, supporting the conjectures in this case. 




\section{Sails of Pell polynomials}

Returning to our `simplest' cases of Markov polynomials, we observe that the critical triangle (and hence sail) is empty for Fibonacci Markov polynomials (as $a = 1$). However, in the case of Markov polynomials corresponding to Pell numbers we can prove some of our earlier conjectures using Eq.~\eqref{Pell Coeff Rec}.

The first of which is the presence and location of the coefficient $4$. According to our conjecture the coefficient $4$ should lie at coordinates corresponding to the penultimate convergent of $\frac{b}{a}$. Here
\begin{equation*}
    \frac{b}{a} = \frac{n+1}{n} = [1, n]
\end{equation*}
So the penultimate convergent is $[1] = \frac{1}{1}$ and we expect the location of the $4$ to be at $(1, n)$ ($1$ in each direction from corner $(0, n+1)$ since continued fraction is even length).

\begin{theorem}
\label{Pell 4 Location}
    For Markov polynomials $M_{\rho}$, where $\rho = \frac{n}{n+1}$, the coefficient at the coordinate $(1, n)$ in the Newton polygon is $4$.
\end{theorem}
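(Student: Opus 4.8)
The plan is to extract the coefficient $A_{1,n}^{(2n+1)}$ of the monomial $x^{2}y^{2n}z^{0}$ in the numerator $R_{2n+1}$ of $M_{n/(n+1)}$ directly from the Pell recurrence, rather than chasing it through the whole Klein-diagram picture. By Theorem~\ref{denom} and the discussion around \eqref{Mar Pell 2}, the numerator $R_{2n+1}=P_{n/(n+1)}(x^{2},y^{2},z^{2})$ is homogeneous of degree $a+b-1=2n$ in $u=x^{2},v=y^{2},w=z^{2}$; the point $(i,j)=(1,n)$ has $i+j=n+1<2n$ (for $n\ge 2$), so it is an interior lattice point of $\Delta_{n/(n+1)}$ and we are genuinely inside the critical triangle. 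First I would verify the base cases $n=1$ (where $(1,1)$ is a boundary point and one checks $M_{1/2}$ directly from \eqref{eq:base_cases}) and $n=2$ by hand from $R_5$, so that the induction has somewhere to start.

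For the inductive step I would use the coefficient recurrence \eqref{Pell Coeff Rec}, specialised to $(i,j)=(1,n)$ in $R_{2n+1}$:
\begin{equation*}
A_{1,n}^{(2n+1)} = A_{-1,n}^{(2n-1)} + 2A_{0,n-1}^{(2n-1)} + A_{1,n-2}^{(2n-1)} + A_{0,n}^{(2n-1)} + A_{1,n-1}^{(2n-1)} - A_{0,n-1}^{(2n-3)}.
\end{equation*}
The term $A_{-1,n}^{(2n-1)}$ vanishes. The remaining terms all lie on the lines $i=0$ and $i=1$, which are exactly the lines where Theorem~\ref{Fibonacci Critical} (the $\rho=2/(2n-1)$ case, i.e. the numerators $R_{2n-1}$ of $M_{(n-1)/n}$, which is the $\rho\le 1$ representative here — equivalently read off from $S_0$ and $S_1$ in Theorems~\ref{Coefficients} and \ref{Fibonacci Critical}) gives closed forms. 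Concretely, $A_{0,j}$ and $A_{1,j}$ for $R_{2n-1}$ are known binomial-type expressions, and $A_{0,n-1}^{(2n-3)}$ likewise for $R_{2n-3}$. Substituting these closed forms and simplifying should collapse the right-hand side to the constant $4$, independently of $n$; this is the one genuinely computational step, but it is a finite manipulation of binomial coefficients with small arguments (the relevant $S_0,S_1$ have very sparse support near the $i=0,1$ edges), so I expect dramatic cancellation.

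An alternative, cleaner route avoiding Theorem~\ref{Fibonacci Critical} entirely: work with the Binet-type formula \eqref{Binet}. Writing $R_{2n+1}=\frac{\lambda_1-y^2z^2}{\sqrt{\mathcal D}}(\lambda_1^{n}-\lambda_2^{n})+\lambda_2^{n}$, one extracts the coefficient of $x^{2}y^{2n}$ (i.e. the $z^0$-part, picking out the top power of $v$ consistent with degree) by expanding in the variable $w=z^{2}$ and reading off the appropriate order; since $\lambda_{1,2}$ and $\sqrt{\mathcal D}$ have explicit low-order expansions in $w$ (note $\mathcal D=(u+v)^4+2w(u+v)^3+w^2(u-v)^2$ and $\lambda_1\lambda_2=uvw^2$), the $x^2y^{2n}$ coefficient comes from a short Taylor computation. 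I would present whichever of the two is shorter after doing the algebra; the induction-plus-\ref{Fibonacci Critical} version is safer because every ingredient is already proved in the excerpt.

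The main obstacle I anticipate is purely bookkeeping: making sure the index shifts in \eqref{Pell Coeff Rec} are aligned with the correct Newton-polygon coordinates (the recurrence mixes the $R_{2k+1}$ indexing with the $P_{a/b}$ indexing, and one must be careful that "$(1,n)$ in $R_{2n+1}$" corresponds to the same geometric point across levels $2n-1$, $2n-3$ after the monomial rescalings induced by the factors $x^2y^2z^4$ etc.), and then checking that no boundary term has been misclassified as interior. Once the indices are pinned down, the binomial simplification to $4$ is routine.
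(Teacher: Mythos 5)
Your starting point is the same as the paper's: specialise the recurrence \eqref{Pell Coeff Rec} to $(i,j)=(1,n)$. But the execution diverges, and the divergence contains a genuine error. You propose to evaluate every remaining term on the right-hand side by ``reading off from $S_0$ and $S_1$'', identifying ``the $\rho=2/(2n-1)$ case'' of Theorem~\ref{Fibonacci Critical} with ``the numerators $R_{2n-1}$ of $M_{(n-1)/n}$''. These are \emph{not} the same Markov polynomial. The numerator $R_{2n-1}$ appearing in \eqref{Pell Coeff Rec} is $P_{(n-1)/n}$ (the previous odd-indexed Markov--Pell polynomial), whereas Theorem~\ref{Fibonacci Critical} gives $S_1$ for $M_{2/(2n-1)}$, which sits on a different branch of the Conway topograph (compare, e.g., $m_{3/4}=169$ with $m_{2/7}=1325$). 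So the closed form you reach for simply does not describe the $i=1$ line of $R_{2n-1}$, and the ``binomial simplification to~$4$'' you anticipate would not come out.

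There is also a conceptual issue hiding behind the bookkeeping: the term $A_{1,n-1}^{(2n-1)}$ is precisely the coefficient ``$4$'' of $R_{2n-1}$, i.e.\ it is the very statement being proved one step down the induction. It cannot be produced from an independent closed form without circularity. The paper's argument sidesteps all of this. One observes that $A_{-1,n}^{(2n-1)}=0$, that $A_{0,n-1}^{(2n-1)}$ and $A_{1,n-2}^{(2n-1)}$ vanish because those lattice points lie strictly below the lower boundary $\tfrac{i}{n-1}+\tfrac{j}{n}\ge 1$ of $\Delta_{(n-1)/n}$, and that $A_{0,n}^{(2n-1)}$ and $A_{0,n-1}^{(2n-3)}$ are vertex coefficients equal to~$1$ which cancel. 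What is left is the clean identity $A_{1,n}^{(2n+1)}=A_{1,n-1}^{(2n-1)}$, so the value $4$ propagates by induction from the base case $n=2$ (the polynomial $M_{2/3}$, checked directly). In short: don't evaluate the right-hand side term by term; let the Newton-polygon constraints kill most of it and keep $A_{1,n-1}^{(2n-1)}$ as the inductive hypothesis. Your Binet-type alternative is a genuinely different idea and not what the paper does, but as sketched it is not carried far enough to assess.
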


\begin{proof}
Looking at $i = 1, j = n$ in Eq.~\eqref{Pell Coeff Rec}, we find
\begin{equation}
    A_{1, n}^{(2n+1)} = \left[A_{-1, n}^{(2n-1)} + 2 A_{0, n-1}^{(2n-1)} + A_{1, n-2}^{(2n-1)}\right] + \left[A_{0, n}^{(2n-1)} + A_{1, n-1}^{(2n-1)}\right] - A_{0, n-1}^{(2n-3)}.
\end{equation}
Clearly if $i$ or $j$ is negative the coefficient will be $0$ however this is not the only case where the coefficient is $0$. The lower vertices of the Newton polygon for $R_{2n-1}$ are at $(0, n)$ and $(n-1, 0)$ therefore $A_{0, n-1}^{(2n-1)}, A_{1, n-2}^{(2n-1)}$ will also be $0$ (they are outside the boundaries of the Newton polygon - can be seen rigorously by showing $\frac{i}{n-1} + \frac{j}{n} \leq 1$).
This leaves the final $3$ terms; $A_{0, n}^{(2n-1)}$ and $A_{0, n-1}^{(2n-3)}$ are vertices of their respective Newton polygons and are known to be $1$ (and cancel each other out) so we are left with
    $A_{1, n}^{(2n+1)} = A_{1, n-1}^{(2n-1)}.$
So all we need to do is prove that it holds in the base case ($n = 2$, since critical triangle is empty in $n = 1$ case). Direct computation of $M_{2/3}$ (corresponding to the Markov number  $29$) shows that this is indeed the case (see Figure~\ref{fig:ctwNP29}).
\end{proof}

Combining our conjectures regarding coefficients on the sail, in the case $M_{n/n+1}$ the integer points on the sail should have coefficients following an arithmetic progression with common difference $4$.

\begin{theorem}
    For Markov polynomials $M_{\rho}$ where $\rho = \frac{n}{n+1}$, the coefficients at the coordinates $(m, n+1-m)$ in the Newton polygon are $4m$, for $m = 1, 2, \dots, n-1$.
\end{theorem}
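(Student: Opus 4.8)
The plan is to prove that for $\rho = \frac{n}{n+1}$ the coefficient $A^{(2n+1)}_{m,\,n+1-m} = 4m$ for $m=1,\dots,n-1$ by induction on $n$, running the induction through the recurrence \eqref{Pell Coeff Rec}. I would write $c_m^{(n)} := A^{(2n+1)}_{m,\,n+1-m}$ for the coefficient at the point $(m,n+1-m)$ on the lower edge of the critical triangle. Note that the target points lie on the line $i+j=n+1$, which is one below the "upper diagonal" $i+j=n+2=a+b-1$; so these are \emph{not} boundary coefficients and must be extracted from the interior. The key observation is that applying \eqref{Pell Coeff Rec} at $(i,j)=(m,n+1-m)$ relates $c_m^{(n)}$ to coefficients of $R_{2n-1}$ at the six shifted points $(m-2,n+1-m)$, $(m-1,n-m)$, $(m,n-1-m)$, $(m-1,n+1-m)$, $(m,n-m)$ and (with a minus sign, from $R_{2n-3}$) $(m-1,n-m)$.

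The main work is to identify, for each of these six shifted points, which diagonal of the $R_{2n-1}$ (resp. $R_{2n-3}$) Newton polygon it lies on, and hence whether its coefficient is (a) zero because it is outside the polygon, (b) a known boundary binomial from Theorem~\ref{Coefficients}, or (c) an inductively-known sail coefficient. For $R_{2n-1}$ we have $a=n-1$, $b=n$, so $a+b-1=2n-2$; the points $(m-1,n-m)$ and $(m,n-m)$ have coordinate sum $n-1 = (a+b-1)-(n-1)$, i.e. they sit on a fixed number of diagonals in from the top, so their coefficients are governed by the explicit formulas $T_0,T_1,\dots$ from Theorem~\ref{Coefficients} — but only when $m$ is near the ends; for general $m$ they lie on the lower sail of the $R_{2n-1}$ polygon and are handled by the induction hypothesis $c_{m-1}^{(n-1)}$ etc. Similarly $(m,n-1-m)$ has coordinate sum $n-1$ but with the roles shifted, and $(m-2,n+1-m)$, $(m-1,n+1-m)$ have coordinate sum $n$ and $n-1$. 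One must carefully check, using the defining inequalities $\frac{i}{n-1}+\frac{j}{n}\ge 1$ and $i+j\le 2n-2$ of $\Delta_{(n-1)/n}$ from \eqref{eqn:Newton Poly}, which of these points fall outside; e.g. $(m,n-1-m)$ may sit below the lower edge exactly when $m$ is small, which is why the boundary cases $m=1$ and $m=n-1$ need separate (easy) treatment. After this bookkeeping the recurrence should collapse to a clean relation of the shape $c_m^{(n)} = c_{m-1}^{(n-1)} + 4$ together with the anchor $c_1^{(n)} = c_1^{(n-1)}$ (which is essentially Theorem~\ref{Pell 4 Location}, giving $c_1^{(n)}=4$), whence $c_m^{(n)} = 4m$ by a trivial double induction on $n$ and $m$.

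I expect the main obstacle to be precisely this case analysis: showing that all the "unwanted" terms on the right of \eqref{Pell Coeff Rec} either vanish or cancel in pairs, leaving only the arithmetic-progression relation. In particular one needs the fact — provable from \eqref{eqn:Newton Poly} and the explicit $T_i$ formulas — that on the two diagonals $i+j = (a+b-1)-1$ and $i+j=(a+b-1)-2$ of $\Delta_{(n-1)/n}$ the relevant coefficients telescope correctly, and that the contributions of $R_{2n-3}$ (which has $a=n-2$, $b=n-1$) exactly kill a matching contribution from $R_{2n-1}$, much as $A^{(2n-1)}_{0,n}$ and $A^{(2n-3)}_{0,n-1}$ cancel in the proof of Theorem~\ref{Pell 4 Location}. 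A secondary technical point is bounding the range of validity: the claim is only for $m=1,\dots,n-1$ because $(n,1)$ and $(0,n+1)$ lie on the boundary, so the induction must be set up so that the hypothesis is only invoked at indices for which it has been established; choosing the induction variable to be $n$ with all valid $m$ simultaneously, and verifying the base case $n=2$ by the direct computation of $M_{2/3}$ already recorded in Figure~\ref{fig:ctwNP29}, should make this clean.
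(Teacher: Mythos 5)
Your proposal follows the same route as the paper: induct on $n$ via the Pell coefficient recurrence \eqref{Pell Coeff Rec}, use the Newton-polygon inequalities \eqref{eqn:Newton Poly} to show the contributions from the ``squared'' terms $A^{(2n-1)}_{m-2,n+1-m}$, $A^{(2n-1)}_{m-1,n-m}$, $A^{(2n-1)}_{m,n-m-1}$ vanish in the generic range, handle $m=n-1$ separately using known boundary coefficients, and anchor the whole thing at $m=1$ via Theorem~\ref{Pell 4 Location} and the base case $n=2$ (Figure~\ref{fig:ctwNP29}). A few wrinkles in your sketch should be tightened in the writeup. First, the boundary coefficients you need for the $m=n-1$ case are those on the horizontal lines $j=0$ and $j=1$ of $\Delta_{(n-1)/n}$ and $\Delta_{(n-2)/(n-1)}$, namely $A_{n-1,0}^{(2n-1)}=1$, $A_{n-1,1}^{(2n-1)}=3(n-1)-1$ and $A_{n-2,1}^{(2n-3)}=3(n-2)-1$ coming from the $R_0$ and $R_1$ formulas in Theorem~\ref{Coefficients}; these points sit $n-1$ diagonals below the top edge, so the $T_0,T_1,\dots$ formulas you mention are not the relevant ones. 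Second, the point $(m,n-1-m)$ lies strictly outside $\Delta_{(n-1)/n}$ exactly when $m\le n-2$, and it is precisely for $m=n-1$ that it lands \emph{on} the lower boundary (with coefficient $1$) and must be retained. Third, what actually drops out of the recurrence in the interior range $2\le m\le n-2$ is the three-term relation $c_m^{(n)}=c_{m-1}^{(n-1)}+c_m^{(n-1)}-c_{m-1}^{(n-2)}$ (all three sail points of the two smaller polygons), not the two-term $c_m^{(n)}=c_{m-1}^{(n-1)}+4$; under the induction hypothesis this gives $4(m-1)+4m-4(m-1)=4m$ as desired.
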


\begin{proof}
    For $m = 1$ the result is proven - it is precisely Theorem \ref{Pell 4 Location}.

    To complete the proof we need to consider two seperate cases; $m = 2, \dots, n-2$ and $m = n - 1$.
    In both cases, Eq.~\eqref{Pell Coeff Rec} becomes
    \begin{multline*}
        A_{m, n+1-m}^{(2k+1)} = \left[A_{m-2, n+1-m}^{(2k-1)} + 2 A_{m-1, n-m}^{(2k-1)} + A_{m, n-m-1}^{(2k-1)}\right] \\
        + \left[A_{m-1, n+1-m}^{(2k-1)} + A_{m, n-m}^{(2k-1)}\right] - A_{m-1, n-m}^{(2k-3)}.
    \end{multline*}
    For $m = 1, 2, \dots, n-1$, the first $3$ terms on the RHS are $0$ (outside the Newton polygon), leaving
    \begin{equation*}
        A_{m, n+1-m}^{(2k+1)} = \left[A_{m-1, n+1-m}^{(2k-1)} + A_{m, n-m}^{(2k-1)}\right] - A_{m-1, n-m}^{(2k-3)}.
    \end{equation*}
    By induction this becomes
$
        A_{m, n+1-m}^{(2k+1)} = \left[4(m-1) + 4m\right] - 4(m-1)= 4m,
$
    where we can check the first two (relevant) cases directly $k = 2, 3$.

    For $m = n - 1$, the term $A_{m, n-m-1}^{(2k-1)} = A_{n-1, 0}^{(2k-1)}$ does not disappear, in fact it corresponds to a lower boundary point and equals $1$ and also the coefficients $A_{m, n-m}^{(2k-1)} = A_{n-1, 1}^{(2k-1)}$ and $A_{m-1, n-m}^{(2k-3)} = A_{n-2, 1}^{(2k-3)}$ lie on the boundary of the critical triangle (not in the interior so we can use the same inductive argument).

    However, Theorem \ref{2nd Hor} tells us that the first point on the first horizontal $(j = 1)$ has coefficient $A_{n-1, 1}^{(2k-1)} = 3(n-1) - 1$ and $A_{n-2, 1}^{(2k-3)} = 3(n-2) - 1$ and so Eq.~\eqref{Pell Coeff Rec} becomes
    \begin{align*}
        A_{n-1, 2}^{(2k+1)} &= A_{n-1, 0}^{(2k-1)} + \left[A_{n-2, 2}^{(2k-1)} + A_{n-1, 1}^{(2k-1)}\right] - A_{n-2, 1}^{(2k-3)} \\
        &= 1 + \left[4(m-1) + (3(n-1)-1)\right] - (3(n-2) - 1)=4m
    \end{align*}
    as required.
\end{proof}

So now we have proven the coefficients on the whole of the interior of the sail. As mentioned we also know one of the boundary coefficients of the sail, namely $A_{n, 1}^{(2k+1)} = 3n - 1$. If we can determine the other boundary $A_{1, n+1}^{(2k+1)}$ then we will know the sail coefficients in their entirety.

Again using Eq.~\eqref{Pell Coeff Rec},
\begin{equation*}
    A_{1, n+1}^{(2k+1)} = \left[A_{-1, n+1}^{(2k-1)} + 2 A_{0, n}^{(2k-1)} + A_{1, n-1}^{(2k-1)}\right] + \left[A_{0, n+1}^{(2k-1)} + A_{1, n}^{(2k-1)}\right] - A_{0, n}^{(2k-3)}.
\end{equation*}
Since $$A_{-1, n+1}^{(2k-1)}=0,\, A_{0, n}^{(2k-1)}=1,\, A_{1, n-1}^{(2k-1)}=4,\, A_{0, n+1}^{(2k-1)}=n-2, \, A_{0, n}^{(2k-3)}=n-3,$$
we have 
$$
    A_{1, n+1}^{(2k+1)} = \left[0 + 2(1) + 4\right] + \left[n-2 + A_{1, n}^{(2k-1)}\right] - (n-3) 
   = A_{1, n}^{(2k-1)} + 7.
$$
Computing $A_{1, 3}^{(3)} = 4$, the solution to this recursion is
$
    A_{1, n+1}^{(2k+1)} = 7n - 10, 
$
so now we have the weights on the sail in its entirety: from top to bottom 
    $(7n - 10,\ 4,\ 8,\ \dots,\ 4n-4,\ 3n-1).$

\section{Concluding remarks}

As we have seen, there are many questions about Markov polynomials, which are still to be answered. Already the question if they provide (up to change of sign of any two of them) all integer Laurent-polynomial solutions to the generalised Markov equation (\ref{MEq}) is still open.
This can be considered as a Laurent-polynomial analogue of the strong approximation conjecture by Bourgain, Gamburd and Sarnak \cite{BGS} about connectivity of the Markov graph of the $\mathrm{mod}$-$p$ solutions of the Markov equation (see recent progress in this direction in \cite{EFLMT}).

Note that from Theorem~\ref{denom} it follows that all Markov
polynomials are distinct, which implies that for a generic choice of
the hyperbolic structure on a once-punctured torus, no two simple
closed geodesics have the same length \cite{Pro}. On the relation with
hyperbolic geometry and the Teichm\"uller theory we refer to
\cite{FST, Fock, Penner}.

We have shown also that there are very plausible conjectures about the coefficients of Markov polynomials.
They can be reformulated combinatorially using the weighted perfect matching interpretation of these coefficients proposed in \cite{IMPV,Pro,CS}.
It would be interesting to explore if this combinatorial interpretation can help to prove some of them (in particular, the Factor-4 conjecture).

\section*{Acknowledgements.}
We would like to thank Jonah Gaster, Oleg Karpenkov, Dylan Thurston, Li Li and especially Ralf Schiffler and Alexey Ustinov for very helpful
discussions.

One of us (Sam Evans) is grateful to the Higher Education, Research and Innovation Department of the French Embassy in the UK for the support of his visit to Laboratoire de Math\'ematiques de Reims UMR 9008 in June 2025.


\end{document}